\theoremstyle{plain}
\newtheorem{lemma}{Lemma}[section]
\newtheorem{theorem}[lemma]{Theorem}
\newtheorem{proposition}[lemma]{Proposition}
\newtheorem{corollary}[lemma]{Corollary}
\theoremstyle{definition}
\newtheorem{definition}[lemma]{Definition}
\newtheorem{example}[lemma]{Example}
\newtheorem{remark}[lemma]{Remark}
\newtheorem*{definition*}{Definition}
\theoremstyle{remark}
\title{The neighbourhood of a singular leaf}
\author{ Camille Laurent-Gengoux\thanks{Institut Elie Cartan de Lorraine, UMR 7502, Universit\'e de Lorraine,  France},
Leonid Ryvkin \thanks{Institut Math\'ematiques de Jussieu, Universit\'e Paris Diderot, Paris, France.} \thanks{Faculty of mathematics, Universit\"at Duisburg-Essen, Essen, Germany } }
\begin{document}

\maketitle

\begin{center}
In memory of Kirill Mackenzie.
\end{center}
\vspace{1em}
\begin{abstract}
An important result for regular foliations is their formal semi-local triviality near simply connected leaves. We extend this result to singular foliations for all $2$-connected leaves and a wide class of $1$-connected leaves by proving a semi-local Levi-Malcev theorem for the semi-simple part of their holonomy Lie algebroid.
\end{abstract}

\setcounter{tocdepth}{2}
\tableofcontents
\vspace{10px}
{\bf Acknowledgements} 
We thank Marco Zambon for a crucial discussion at an early stage of the project, and several precious comments on the final version. We acknowledge valuable comments by Iakovos Androulidakis at relevant places in the text.  Karandeep Singh also suggested several improvements. Both authors are supported by CNRS projet GraNum. L. R. is supported by the PRIME programme of the German Academic Exchange Service with funds from the German Federal Ministry of Education and Research.
\newpage
\section*{Introduction}

Although much less studied than the well-understood regular foliations, singular foliations appear more frequently in differential geometry: orbits of Lie groups actions, symplectic leaves of a Poisson structure, vector fields tangent to an affine variety or annihilating given functions are all instances where the dimension of the leaves may not be constant. All these instances fall into the following category:
\begin{definition*} A singular foliation on a manifold $M$ is a sub-module  $\mathcal F$ of the $C^\infty(M) $-module $\mathfrak X(M) $ of vector fields, which is \emph{(i)} stable under Lie bracket and  \emph{(ii)} locally finitely generated\footnote{
In most of this paper, we will deal with  \emph{locally real analytic} singular foliations $\mathcal F $  (i.e. $M$ is covered by coordinate neighbourhoods in which $\mathcal F$ admits real analytic generators - the change of coordinates does not need to be real analytic), see \cite{LLS}.}.
\end{definition*}

This definition permits to partition $M$ into submanifolds called  \emph{leaves} (Hermann \cite{MR0149402}). There is an open subset of $M$ where $\mathcal F $ is a regular foliation. In particular, at least formally, in a neighbourhood of any such a leaf $L$, the foliation $ \mathcal F$ is entirely described by a group morphism from the fundamental group $\pi_1(L)$ to the group of formal diffeomorphisms of a transversal. In particular, regular foliations are (formally) trivial near simply-connected leaves \cite{MR0055692}.\\

For singular leaves, there have been recent advances in understanding the semi-local structure. Androulidakis and Zambon \cite{AZ13,AZ2} have shown that the holonomy groupoid of $\mathcal F $ (defined previously by Androulidakis and Skandalis \cite{AS}) acts on the normal bundle of the leaf. When the singular foliation is linearizable, this describes the whole semi-local structure. In this article, we mainly focus on the case where $L$ is simply-connected. To our great surprise, we were able to prove that, despite having possibly extremely rich transverse structures, singular foliations remain (formally) trivial near simply-connected leaves, when the transverse singular foliation is made of vector fields vanishing at order at least $2$ (Theorems \ref{theo:transvQuadr} and \ref{thm:TLtoAlinimpliestriviality}). 
When the transverse linear part is not trivial but $L$ is 2-connected, we still have a Levi-Malcev type theorem decomposing $ \mathcal F$ as a semi-direct product of a semi-simple linearizable Lie groupoid action on some transverse singular foliation. The same conclusion holds for simply-connected leaves provided a Levi-Malcev decomposition exists for the linear holonomy Lie algebroid (Theorems \ref{thm:formal} and \ref{thm:formal2}).\\ 

The paper is organised as follows: In Section \ref{sec:holonomy}, we review the notion of holonomy Lie algebroid $A_L$ of a leaf $L$. Using the Artin-Rees theorem, we show that the sub-algebroid of $A_L $ coming from vector fields in $ \mathcal F$ that vanish at least quadratically along $L$ form a nilpotent Lie algebra bundle. This allows us to describe the semi-simple quotient $A_L^s $ of $ A_L$ as a quotient of the linear part $A_L^{lin} $ of $A_L$. 
Using the method of Euler-like vector fields developed in \cite{zbMATH07105909}, 
we show that singular foliations that contain a transverse Euler vector field admit homogeneous generators (see Theorem \ref{theo:EulerExists}). In Section \ref{sec:formalthm}, we state our most central result (Theorem \ref{thm:formal}) and give a geometric reformulation of it (Theorem \ref{thm:formal2}).

Section \ref{sec:applications} applies these results to leaves of dimension $0$, recovering some results of Dominique Cerveau \cite{Cerveau} and deriving consequences for the NMRLA class of \cite{LLS}.  Finally, we show semi-local triviality of transversally quadratic leaves (Theorem \ref{theo:transvQuadr}) and linearly trivial leaves (Theorem \ref{thm:TLtoAlinimpliestriviality}), a phenomenon which is a distinctive feature of singular foliations, with no analogue in the Lie algebroid or Poisson manifold categories (Remark \ref{rmk:nooidnofish}).

\section{Holonomy and connections}
\label{sec:holonomy}
\subsection{The linear holonomy Lie algebroid of a leaf}\label{sec:linhol}

Let $\mathcal F\subset \mathfrak X(M)$ be a singular foliation  on a manifold $M$ (i.e. a  locally finitely-generated $C^{\infty}(M)$-submodule involutive with respect to the Lie bracket).
The singular foliation $\mathcal F $ induces a "singular distribution" defined for every $p \in M $ by:
 $$ T_p \mathcal F :=\left\{~X(p) ~~|~~X \in \mathcal F~\right\}.$$
A fundamental Lemma about singular foliations, originating from Cerveau \cite{Cerveau}, then proved in this context by Dazord \cite{Dazord}, and rediscovered by Androulidakis and Skandalis \cite{AS}, says that singular foliations satisfy a local splitting property, in the following sense: 

 \begin{lemma}[\cite{Cerveau,Dazord,AS}]\label{lem:splitting}
 Let $\mathcal F\subset \mathfrak X(M)$ be a singular foliation on a manifold $M$ of dimension $n$. Every point $p \in M$ admits a neighborhood $U$ on which $\mathcal F $ is isomorphic to the direct product of the following two singular foliations: 
 \begin{enumerate}
     \item  all vector fields on an open ball of dimension $d$, where $ d = {\mathrm{dim}}(T_p \mathcal F)$, and
     \item a singular foliation $ \mathcal T$ on an open ball of dimension $n-d $ made of vector fields that vanish at the origin, and called \emph{transverse singular foliation}. 
 \end{enumerate}
 The germ of the transverse singular foliation does not depend on any choice: any two local isomorphisms as above lead to transverse singular foliations which are locally isomorphic in a neighborhood of the origin. 
 \end{lemma}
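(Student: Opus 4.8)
The plan is to induct on $d=\dim(T_p\mathcal F)$, straightening one vector field at a time. If $d=0$ then every element of $\mathcal F$ vanishes at $p$, and we take $\mathcal T=\mathcal F$ with a zero-dimensional first factor. If $d\ge 1$, pick $X\in\mathcal F$ with $X(p)\neq 0$ and use the classical flow-box theorem to choose coordinates $(t,y)=(t,y_2,\dots,y_n)$ near $p$ in which $X=\partial/\partial t$. Since $\mathcal F$ is locally finitely generated and contains $\partial/\partial t$, we may pick generators $\partial/\partial t,G_1,\dots,G_r$ of $\mathcal F$ near $p$, and --- replacing each $G_j$ by $G_j-dt(G_j)\,\partial/\partial t$, which stays in $\mathcal F$ because $\mathcal F$ is a module --- arrange that no $G_j$ has a $\partial/\partial t$-component. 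Involutivity then gives $[\partial/\partial t,G_j]=\partial G_j/\partial t\in\mathcal F$, and since this bracket again has no $\partial/\partial t$-component it must be a $C^\infty$-combination of $G_1,\dots,G_r$; write $\partial G_j/\partial t=\sum_k b_{jk}(t,y)\,G_k$. (Alternatively one could straighten all $d$ vector fields simultaneously using iterated flows, after first proving that flows of elements of $\mathcal F$ preserve $\mathcal F$; the inductive route avoids this.)

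\textbf{Existence: the gauge step.} The relation $\partial G_j/\partial t=\sum_k b_{jk}\,G_k$ says that the column $(G_1,\dots,G_r)^{\mathsf T}$ solves a linear ODE in $t$ (with $y$ as a parameter) whose matrix is $B=(b_{jk})$. Let $C(t,y)$ be the fundamental solution, $\partial C/\partial t=B\,C$ with $C(0,y)=\mathrm{Id}$; it is invertible for all $t$. Setting $G_j':=\sum_k(C^{-1})_{jk}G_k$ one gets $\partial G_j'/\partial t=0$, so the $G_j'$ are independent of $t$; and since $C$ is invertible, $G_1',\dots,G_r'$ generate the same $C^\infty$-submodule as $G_1,\dots,G_r$. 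Hence near $p$, $\mathcal F$ is generated by $\partial/\partial t$ together with the $t$-independent transverse fields $G_j'$, which is precisely the statement that $\mathcal F\cong\langle\partial/\partial t\rangle\times\mathcal F'$, where $\mathcal F'$, again a singular foliation (locally finitely generated and involutive), is the one on the slice $\{t=0\}\cong\mathbb R^{n-1}$ generated by the $G_j'|_{t=0}$. Because the decomposition is a product, $T_p\mathcal F=\mathbb R\,\partial/\partial t\oplus T_{p'}\mathcal F'$, so $\dim T_{p'}\mathcal F'=d-1$ and the induction hypothesis applies to $\mathcal F'$. Iterating $d$ times splits off the full foliation on a $d$-ball (the product of the full foliations on $d$ lines) and leaves a transverse foliation $\mathcal T$ on an $(n-d)$-ball with $\dim T_0\mathcal T=0$, i.e. made of vector fields vanishing at $0$.

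\textbf{Uniqueness of the transverse germ.} Here the plan is to identify the germ of $\mathcal T$ with the germ of the foliation \emph{induced on an arbitrary transversal slice through $p$} and to show this induced foliation is independent of the slice. For a submanifold $S$ through $p$ transverse to the leaf one forms the pulled-back foliation $\iota_S^{!}\mathcal F$ (vector fields on $S$ that are $\iota_S$-related to elements of $\mathcal F$); one checks this is again a singular foliation and that the construction is natural under foliated diffeomorphisms. In the model $\bigl(\mathbb R^d\times\mathbb R^{n-d},\ \mathfrak X(\mathbb R^d)\times\mathcal T\bigr)$ a direct computation shows $\iota_S^{!}$ of the foliation on $\{0\}\times\mathbb R^{n-d}$ is $\mathcal T$, and more generally that for any slice $S'$ transverse to the horizontal leaf the projection $\mathrm{pr}\colon\mathbb R^d\times\mathbb R^{n-d}\to\mathbb R^{n-d}$ restricts to a foliated diffeomorphism $(S',\iota_{S'}^{!}(\mathfrak X\times\mathcal T))\to(\mathbb R^{n-d},\mathcal T)$. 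Given two product presentations of $(M,\mathcal F)$ near $p$, transporting the slice of one into the model of the other and combining these two facts yields a germ of diffeomorphism carrying $\mathcal T_1$ to $\mathcal T_2$.

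\textbf{The main obstacle.} The crux is the gauge step: it is tempting, but \emph{false}, to think the transverse foliation is generated by the naive restrictions $G_j|_{t=0}$ extended constantly in $t$ --- that module need not even lie in $\mathcal F$ --- and one must first pass to the corrected frame $G_j'$. This is the one place in the existence argument where involutivity is genuinely used, namely to produce the matrix $B$. For the uniqueness part the work is bookkeeping: making the pullback $\iota_S^{!}\mathcal F$ of a singular foliation along a transversal well defined, checking its naturality, and carrying out the ``foliated submersion restricted to a section'' computation --- routine, but the singular transversality of $S$ has to be handled with care, since the rank of $\mathcal F$ can jump near $p$.
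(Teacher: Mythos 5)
The paper does not prove Lemma \ref{lem:splitting}: it is quoted from \cite{Cerveau,Dazord,AS}, so there is no internal proof to compare against. Your argument is, however, correct and is essentially the classical one behind those references: flow-box straightening of one non-vanishing $X\in\mathcal F$, then the gauge step --- writing $\partial G_j/\partial t=\sum_k b_{jk}G_k$ (the $\partial/\partial t$-coefficient vanishes automatically once the $G_j$ are purged of their $\partial/\partial t$-components) and conjugating by the fundamental solution of the linear ODE to obtain $t$-independent generators --- followed by induction on $d$. This ODE/gauge route is a legitimate alternative to the argument of Androulidakis--Skandalis, which instead invokes the fact that time-one flows of elements of $\mathcal F$ preserve $\mathcal F$ to build the product chart; your version trades that invariance statement for smooth dependence of linear ODE solutions on parameters, which is a fair exchange, and your remark that the naive restrictions $G_j|_{t=0}$ do not work is exactly the right caution. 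The uniqueness sketch (pulling $\mathcal F$ back to a transversal, checking naturality, and showing that in the model the projection identifies the induced foliation on any slice transverse to the leaf with $\mathcal T$) is the standard mechanism and the ``direct computation'' you defer does go through, using that the first factor consists of \emph{all} vector fields on the $d$-ball; just make sure the definition of the induced foliation on a slice is the module-theoretic pullback (or verify, as one can in the model, that your ``$\iota_S$-related'' description generates the same module), since that is where local finite generation and the isomorphism class of the germ are actually checked.
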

 
This splitting lemma is crucial for proving the following results:
\begin{enumerate}
    \item By \cite{MR0149402}, $M$ has a unique decomposition into submanifolds called \emph{leaves} which are the maximal integral subsets for $\mathcal F$). Moreover, the tangent space of the leaf through $p \in M$ is $ T_p \mathcal F$.
    \item Any two points $p_1, p_2$ on the the same leaf admit neighborhoods $U_1, U_2$ on which the singular foliations $\mathcal F|_{U_1}$ and $\mathcal F|_{U_2}$ are isomorphic (\cite{Dazord}). In particular, their germs of transverse singular foliations are isomorphic. It makes sense, therefore, to speak of \emph{the} transverse singular foliation of a given leaf.
\end{enumerate}

In this subsection, we will define several Lie algebroids describing the behaviour of $\mathcal F$ near a chosen leaf $L$.

\begin{definition}[\cite{AS,AZ13}]
Let $\mathcal F$ be a singular foliation and $L$ a locally closed leaf. Let $I_L\subset C^\infty(M)$ be the ideal of functions vanishing along $L$. The \emph{holonomy Lie algebroid} $A_L\to L$ is defined implicitly by the equality $\Gamma(A_L)=\frac{\mathcal F}{I_L\mathcal F}$.
\end{definition}

To verify that this yields a well-defined Lie algebroid, one shows that $\frac{\mathcal F}{I_L\mathcal F}$ is a Lie Rinehart-algebra and a projective $C^\infty(L)$-module (\cite{AZ13}). It is therefore a Lie algebroid. By construction, this Lie algebroid is transitive. It is therefore locally trivial. We denote by $\mathfrak g_L=ker(\rho\colon A_L\to TL)$ its isotropy bundle of Lie algebras. At a given point $p \in L$, $\mathfrak g_p$ is by construction the Androulidakis-Skandalis isotropy Lie algebra (see \cite{AS}) of $\mathcal F$ at $p$.\\

The holonomy Lie algebroid $A_L$ acts on the normal bundle $\nu=\frac{TM|_L}{TL}$ of $L$ in $M$, see \cite{AZ13}. Algebraically, this action can be seen as follows: The space $\Gamma(\nu)$ is isomorphic to $\frac{\mathfrak X(M)}{\mathfrak X_L(M)}$, where $\mathfrak X_L(M)\subset \mathfrak X(M)$ are the vector fields tangent to $L$, i.e. the vector fields $X$ satisfying $X(I_L)\subset I_L$. Now, $\mathcal F\subset \mathfrak{X}_L(M)$ acts on this quotient and $I_L\mathcal F$ acts trivially. The induced action of $\frac{\mathcal F}{I_L\mathcal F}$ on $\frac{\mathfrak X(M)}{\mathfrak X_L(M)}$ is a Lie algebroid action of $A_L$ on the normal vector bundle $\nu$.

This action is equivalently given by a homomorphism of transitive Lie algebroids $A_L\to CDO(\nu)$, where $CDO(\nu)$ is the Lie algebroid of \emph{covariant differential operators} of $ \nu \to L$ introduced 
by Mackenzie (\cite{Mackenzie}). Recall that this Lie algebroid fits into the exact sequence
\[
0\to \mathfrak {gl}(\nu)\to CDO(\nu)\to TL\to 0 
\]
and that its sections can be interpreted as fiberwise linear vector fields on $\nu$.

\begin{example}
For $L$ a regular leaf, $ A_L=TL $ and the $TL$-action on the normal bundle is the Bott-connection.
\end{example}

\begin{definition}
We call \emph{linear holonomy Lie algebroid of $L$} the image Lie algebroid of $A_L\to CDO(\nu)$. We denote this Lie  algebroid by $A_L^{lin}$.
\end{definition}

\begin{example} Let the leaf $L=\{p\}$ be a point. Then $A_L=\frac{\mathcal F}{I_p\mathcal F}=\mathfrak g_p$ is a Lie algebra and $\nu=T_p M$. The linear holonomy Lie algebroid is the Lie subalgebra of ${\mathfrak{gl}}(T_pM) $ obtained by linearizing  all the vector fields in $\mathcal F$:
$$ \xymatrix{\mathcal F \ar[rrr]^{lin(X)} \ar[d]  & & & {\mathfrak{gl}}(T_pM)\\   \mathfrak g_p \ar@{.>}[urrr]_{}  & &  &}.$$
The dotted arrow is well-defined, as vector fields in $I_p\mathcal F$ vanish quadratically at $p$. Its image is the Lie algebra considered in \cite{Cerveau}.
\end{example}

By definition of $ A_L^{lin}$, there is a surjective Lie algebroid morphism $A_L\to A_L^{lin}$. 
Let us  understand its kernel, which is a locally trivial bundle of Lie algebras by transitivity of $ A_L$. 
Since all vector fields in $\mathcal F$ are tangent to the leaf $L$, derivation w.r.t $X \in \mathcal F$ preserves the filtration:
 $$ C^\infty(M)  \supset  I_L \supset I_L^2  \supset  ...$$
 This implies that for all $i,j \geq 0$:
  $$  [\mathcal F \cap I_L^i \mathfrak X(M) , \mathcal F \cap I_L^j \mathfrak X(M)  ] \subset \mathcal F \cap I_L^{i+j-1}  \mathfrak X(M) .$$

This induces a natural filtration on $\Gamma(A_L)$ by $\Gamma(A_L)^i := \frac{\mathcal F\cap I_L^i\mathfrak X(M)}{I_L\mathcal F\cap I_L^i\mathfrak X(M)}$. In words, $\Gamma(A_L)^i$ is ``the space of sections in $A_L$ that can be represented by a vector field in $ \mathcal F$  that vanishes at order $i$ along $L$''.
This filtration obviously satisfies $[\Gamma(A_L)^i, \Gamma(A_L)^j]\subset \Gamma(A_L)^{i+j-1}$ and $[\Gamma(A_L)^i, \Gamma(A_L)]\subset \Gamma(A_L)^{i}$. By construction, we have 

\begin{lemma} \label{lem:filtration} There exists a vector bundle filtration of $A_L$:
$$A_L=  A_L^{0} \supset  A_L^{1}  \supset   A_L^{2}  \supset \cdots $$
 such that $\Gamma(A_L^i)=\Gamma(A_L)^i$. Moreover, for every $i > 0$, $A_L^i$  is a Lie algebra bundle, $A_L^1=\mathfrak g_L$ and $A_L^2=ker(A_L\to A_L^{lin})$. In particular $A_L^{lin}=\frac{A_L}{A_L^2}$.
\end{lemma}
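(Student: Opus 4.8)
The plan is to use the local splitting Lemma~\ref{lem:splitting} to make the situation fully explicit near each point of $L$ and then read off the four assertions; only the identification of the filtration levels and of $A_L^2$ require genuine computation.

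First, each $\Gamma(A_L)^i$ is a $C^\infty(L)$-submodule of $\Gamma(A_L)$: $\mathcal F$ and $I_L^i\mathfrak X(M)$ are $C^\infty(M)$-submodules of $\mathfrak X(M)$, hence so is their intersection, and its image in $\Gamma(A_L)=\mathcal F/I_L\mathcal F$ is stable under multiplication by $C^\infty(M)$, i.e.\ by $C^\infty(L)$. Taking $i=0$ gives $A_L^0=A_L$. Since $\rho\colon A_L\to TL$ sends $[X]$ to the restriction $X|_L$, and $X|_L=0$ exactly means $X\in I_L\mathfrak X(M)$, we get $\Gamma(A_L)^1=\ker\rho=\Gamma(\mathfrak g_L)$; as $\mathfrak g_L$ is a subbundle, this already yields $A_L^1=\mathfrak g_L$. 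It remains to realize the remaining $\Gamma(A_L)^i$ as section modules of subbundles and to identify $A_L^2$, which may be checked in a neighbourhood of an arbitrary $p\in L$.

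Apply Lemma~\ref{lem:splitting} at $p$: pick coordinates $(x,y)=(x_1,\dots,x_d,y_1,\dots,y_{n-d})$ on a neighbourhood $U$ with $L\cap U=\{y=0\}$ and $\mathcal F|_U=\langle\partial_{x_1},\dots,\partial_{x_d}\rangle\oplus\langle V_1,\dots,V_r\rangle$ (generated over $C^\infty(U)$), where $V_\alpha=\sum_l (V_\alpha)_l(y)\partial_{y_l}$ with $(V_\alpha)_l(0)=0$, the $V_\alpha$ generating the transverse foliation $\mathcal T$ and, after shrinking the generating set via Nakayama, having classes that form a basis of $\mathfrak g_p=\mathcal T/I_0\mathcal T$ ($I_0$ the ideal of the origin in the $y$-ball). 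Because $I_L=I_0 C^\infty(U)$, passing to $A_L$ amounts to restricting coefficient functions to $y=0$; this identifies $\Gamma(A_L)|_{U\cap L}$ with $\mathfrak X(U\cap L)\oplus(C^\infty(U\cap L)\otimes\mathfrak g_p)$, with $\rho$ the projection onto the first summand (using $V_\alpha(0)=0$) and the second summand carrying the constant bracket of $\mathfrak g_p$, the connection and curvature terms dropping out since the $V_\alpha$ have no $\partial_{x_k}$-component and depend only on $y$. In particular $\mathfrak g_L|_{U\cap L}\cong(U\cap L)\times\mathfrak g_p$ as Lie algebra bundles. I then claim that, for $i\ge1$, $\Gamma(A_L)^i|_{U\cap L}=C^\infty(U\cap L)\otimes\mathfrak g_p^{(i)}$, where $\mathfrak g_p^{(i)}\subseteq\mathfrak g_p$ is the image of $\mathcal T\cap I_0^i\mathfrak X$. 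The inclusion $\supseteq$ is clear ($h(x)W$ with $W\in\mathcal T\cap I_0^i\mathfrak X$ lies in $\mathcal F|_U\cap I_L^i\mathfrak X(U)$ with image $h\otimes[W]$). For $\subseteq$, write $X=\sum_k f_k\partial_{x_k}+\sum_\alpha g_\alpha V_\alpha\in\mathcal F|_U\cap I_L^i\mathfrak X(U)$: since $i\ge1$ the $\partial_{x_k}$-part dies modulo $I_L$; and for each fixed $x_0$ the vector field $\sum_\alpha g_\alpha(x_0,\cdot)V_\alpha$ still lies in $\mathcal T\cap I_0^i\mathfrak X$ (restricting $x$ preserves order of vanishing in $y$), so its class $\sum_\alpha g_\alpha(x_0,0)[V_\alpha]$ lies in $\mathfrak g_p^{(i)}$; letting $x_0$ vary, the image of $X$ is a smooth $\mathfrak g_p$-valued function valued in the linear subspace $\mathfrak g_p^{(i)}$, hence lies in $C^\infty(U\cap L)\otimes\mathfrak g_p^{(i)}$. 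So $\Gamma(A_L)^i$ is the section module of the subbundle that equals $(U\cap L)\times\mathfrak g_p^{(i)}$ on each such chart; these local pieces glue because their section modules are the globally defined $\Gamma(A_L)^i$, and $\Gamma(A_L^i)=\Gamma(A_L)^i$ then follows by a partition of unity. (Consistently, $\mathfrak g_p^{(1)}=\mathfrak g_p$ since each $V_\alpha$ vanishes at $0$.)

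For $i\ge1$, $\rho(A_L^i)\subseteq\rho(A_L^1)=0$, so $A_L^i\subseteq\mathfrak g_L$; and $[\Gamma(A_L)^i,\Gamma(A_L)^i]\subseteq\Gamma(A_L)^{2i-1}\subseteq\Gamma(A_L)^i$ by the bracket inequality $[\Gamma(A_L)^i,\Gamma(A_L)^j]\subseteq\Gamma(A_L)^{i+j-1}$ noted above together with $2i-1\ge i$, so $A_L^i$ is bracket-closed and, in the trivialization above, equals $(U\cap L)\times\mathfrak g_p^{(i)}$ with $\mathfrak g_p^{(i)}$ a subalgebra of $\mathfrak g_p$; hence $A_L^i$ is a Lie algebra bundle. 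Finally, $A_L^2=\ker(A_L\to A_L^{lin})=\ker(\phi\colon A_L\to CDO(\nu))$: if $X\in\mathcal F\cap I_L^2\mathfrak X(M)$ then $[X,Y]\in I_L\mathfrak X(M)\subseteq\mathfrak X_L(M)$ for all $Y\in\mathfrak X(M)$ (each term of $[X,Y]$ acquires a factor in $I_L$), so $X$ acts trivially on $\nu=\mathfrak X(M)/\mathfrak X_L(M)$ and $A_L^2\subseteq\ker\phi$. Conversely, writing $X=\sum_k f_k\partial_{x_k}+\sum_\alpha g_\alpha V_\alpha$ as above and computing $[X,g\,\partial_{y_m}]$ modulo $\mathfrak X_L(M)$ for all $g$ and $m$ (and checking that $[X,h\,\partial_{x_j}]$ is automatically tangent to $L$) shows that $\phi([X])=0$ forces $f_k\in I_L$ for every $k$ and $\sum_\alpha g_\alpha(V_\alpha)_l\in I_L^2$ for every $l$; since every $\partial_{x_k}$ is a generator of $\mathcal F|_U$, the first condition puts $\sum_k f_k\partial_{x_k}\in I_L\mathcal F$, so modulo $I_L\mathcal F$ we have $[X]=[\sum_\alpha g_\alpha V_\alpha]$ with $\sum_\alpha g_\alpha V_\alpha\in\mathcal F|_U\cap I_L^2\mathfrak X(U)$, i.e.\ $[X]\in A_L^2$. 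Hence $\ker\phi=A_L^2$, and $A_L^{lin}=A_L/A_L^2$ since $A_L\to A_L^{lin}$ is surjective with kernel $A_L^2$. I expect the main obstacle to be organisational: setting up the splitting-coordinate identification of $A_L|_{U\cap L}$ and of the filtration carefully enough that the slice argument and the normal-bundle action computation are genuinely rigorous.
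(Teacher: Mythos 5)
Your proof is correct. The paper offers no argument for this lemma beyond the phrase ``By construction'' (the filtration of section modules being defined immediately before the statement), and your splitting-chart verification --- identifying $\Gamma(A_L)^i$ locally with $C^\infty(U\cap L)\otimes\mathfrak g_p^{(i)}$, checking local triviality of the induced Lie algebra bundles, and computing the kernel of the action $A_L\to CDO(\nu)$ to get $A_L^2$ --- is exactly the intended construction, just carried out in detail.
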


\begin{remark}
In general, the above filtration does not need to terminate, i.e. $\bigcap_{i \geq 0} A_L^i$ might not be the zero vector bundle. For instance, consider the foliation on $\mathbb R$ defined by the vector field $ e^{\tfrac{-1}{x^2}} \tfrac{\partial}{\partial x}$. Then $\{ 0 \}$ is a leaf for which $ A_{\{0\}}^i =A_{\{0\}} = \mathbb R$ for all $i \in \mathbb N$, hence $\bigcap_i A_{\{0\}}^i = \mathbb R$. We will see, in the next subsection, that this pathology can not happen for locally real analytic singular foliations.
\end{remark}

\subsection{Nilpotence and the semi-simple holonomy}

For a locally real analytic singular foliation $ \mathcal F$,
upon restriction to a neighbourhood $U$ of a point $p\in M$, we may assume that $\mathcal F|_U$ has real analytic generators $(X_i)_{i=1}^r $ in some local coordinates, and consider the module $\mathcal F^{ra} $ over real analytic functions it generates, and define \emph{the real analytic holonomy Lie algebroid} of the leaf through $p$ (in $U$) by $ \tfrac{\mathcal F^{ra} }{ \mathcal I  \mathcal F^{ra} } $ (where $\mathcal I  $ are real analytic functions vanishing on $L \cap U$ - which is easily seen to be a real analytic subvariety).

\begin{lemma} \label{lem:ra=smooth}
In the above setting, the (smooth) holonomy Lie algebroid and the real analytic holonomy Lie algebroid are isomorphic (as filtered Lie algebras). In equation: $\Gamma(A_L|_{U \cap L})=\tfrac{\mathcal F^{ra}}{\mathcal I \mathcal F^{ra}}$.
\end{lemma}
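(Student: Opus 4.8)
The plan is to exhibit the smooth holonomy Lie algebroid as the \emph{smoothification} of its real analytic analogue, using Malgrange's faithful flatness theorem, and then to read off the identification of the filtered Lie algebras. First I would pass to germs. Fix $p\in L\cap U$; after shrinking $U$ (and using, as noted above, that $L\cap U$ is a closed real analytic submanifold of $U$) we may choose a real analytic coordinate system on $U$ in which $L\cap U=\{y_1=\cdots=y_{n-d}=0\}$. Then the germ at $p$ of the smooth vanishing ideal is $I_{L,p}=(y_1,\dots,y_{n-d})\,C^\infty_p$ and the germ of the analytic one is $\mathcal I_p=(y_1,\dots,y_{n-d})\,C^\omega_p$, so that $I_{L,p}^{\,k}=\mathcal I_p^{\,k}\,C^\infty_p$ for every $k\ge 0$; likewise the germ $\mathcal F_p=C^\infty_p\langle X_1,\dots,X_r\rangle$ is the $C^\infty_p$-module generated by the analytic generators spanning $\mathcal F^{ra}_p=C^\omega_p\langle X_1,\dots,X_r\rangle$.

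The key input is the theorem of Malgrange that $C^\infty_p$ is faithfully flat over $C^\omega_p$. Since a flat base change carries a submodule of a free module to a submodule, commutes with quotients and with finite intersections of submodules, and is exact, one obtains inside the germs of vector fields, for every $i\ge 0$, the canonical identifications
\begin{gather*}
C^\infty_p\otimes_{C^\omega_p}\mathcal F^{ra}_p=\mathcal F_p,\qquad C^\infty_p\otimes_{C^\omega_p}\big(\mathcal I_p^{\,i}\mathcal F^{ra}_p\big)=I_{L,p}^{\,i}\mathcal F_p,\\
C^\infty_p\otimes_{C^\omega_p}\big(\mathcal F^{ra}_p\cap \mathcal I_p^{\,i}\mathfrak X^{ra}_p\big)=\mathcal F_p\cap I_{L,p}^{\,i}\mathfrak X_p .
\end{gather*}
Dividing out, and using that taking stalks commutes with finite intersections of subsheaves, this yields a natural germ-wise isomorphism $\Gamma(A_L)_p\cong C^\infty_p\otimes_{C^\omega_p}N_p$, where $N:=\mathcal F^{ra}/\mathcal I\mathcal F^{ra}$ is the real analytic holonomy module, together with the analogous isomorphism for each filtration step, so that the filtration of $A_L$ corresponds fibrewise to the analytic filtration of $N$ by order of vanishing along $L$.

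Finally I would promote this to a statement about vector bundles. Because $A_L$ is a genuine vector bundle, $\Gamma(A_L)_p$ is free of some finite rank $m$ over the local ring $C^\infty_{L,p}$; as the latter is faithfully flat over the Noetherian local ring $C^\omega_{L,p}$ (germs of real analytic functions on a ball, i.e. convergent power series), faithfully flat descent of projectivity — finite modules over a Noetherian ring being finitely presented — forces $N_p$ to be free of rank $m$ over $C^\omega_{L,p}$. Hence, after shrinking $U$, $N$ is the module of real analytic sections of a real analytic vector bundle $A_L^{ra}\to U\cap L$ of rank $m$, and the germ-wise isomorphisms glue to a canonical isomorphism between $A_L|_{U\cap L}$ and the smooth vector bundle underlying $A_L^{ra}$. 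Since the Lie bracket on $\mathcal F^{ra}$ and the filtration by vanishing order are simply the restrictions of the smooth ones, this isomorphism is one of filtered Lie–Rinehart algebras, in particular of filtered Lie algebras, which is the asserted identity $\Gamma(A_L|_{U\cap L})=\mathcal F^{ra}/\mathcal I\mathcal F^{ra}$.

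The step I expect to be the main obstacle is the middle one: one must verify that passing from the analytic to the smooth module commutes not merely with sums and quotients but with the intersections $\mathcal F\cap I_L^{\,i}\mathfrak X(M)$ that define the filtration — this is exactly where \emph{flatness}, rather than bare functoriality of the tensor product, is needed — together with the descent of freeness from $C^\infty_{L,p}$ down to $C^\omega_{L,p}$, which rests on $C^\omega_{L,p}$ being Noetherian and on $C^\omega_{L,p}\hookrightarrow C^\infty_{L,p}$ being faithfully flat (Malgrange).
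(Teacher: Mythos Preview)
Your argument is correct and rests on the same key input as the paper's proof, namely Malgrange's theorem that smooth functions are faithfully flat over real analytic functions; the paper simply cites this fact and declares the isomorphism a ``direct consequence,'' whereas you carefully unpack what that means (commutation of base change with quotients and intersections, descent of freeness, gluing of germs) and in particular make explicit why the \emph{filtration} is preserved---something the paper leaves implicit.
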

\begin{proof}
We have to show that the natural filtered Lie algebra morphism $\tfrac{\mathcal F^{ra}}{\mathcal I \mathcal F^{ra}}\to \tfrac{\mathcal F}{I_L \mathcal F} $ induced by the inclusion is an isomorphism. This is a direct consequence of the statement, that smooth functions form a faithfully flat module over real analytic functions (Corollary VI.1.12 in \cite{MR0212575}).
\end{proof}

\subsubsection{The Artin-Rees Lemma and nilpotence}

In this subsection, we show, that for locally real analytic foliations, the kernel of the linearization homomorphism $A_L\to A_L^{lin}$ is a bundle of nilpotent Lie algebras. The proof is based on the following statement of Commutative Algebra:

\begin{theorem} {{\bf Artin-Rees} (\cite{zbMATH03279238})}
\label{thm:ArtinRees}
	Let $\mathcal X$ be a finitely generated module over a Noetherian
	ring $\mathcal C$, $\mathcal I$ be an ideal of $\mathcal C$ and $ \mathcal F\subset \mathcal X $ a submodule. Then there is a positive integer $c$ such that
	\begin{align} \label{eq:ArtinRees} \mathcal I^{n} \mathcal X \cap \mathcal F=\mathcal I^{n-c}((\mathcal I^c \mathcal X)\cap \mathcal F ) \hspace{.5cm} \mathrm{for ~all}~ n\geq c \end{align}
\end{theorem}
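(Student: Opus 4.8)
The plan is to deduce the lemma from the Rees algebra (blow-up algebra) of $\mathcal I$. First I would introduce the graded ring $\mathcal R := \bigoplus_{n \ge 0} \mathcal I^n t^n \subset \mathcal C[t]$. Since $\mathcal I$ is finitely generated and $\mathcal C$ is Noetherian, $\mathcal R$ is a finitely generated $\mathcal C$-algebra, hence itself Noetherian by the Hilbert basis theorem. Next I would form the graded $\mathcal R$-module $\mathcal M := \bigoplus_{n \ge 0} \mathcal I^n \mathcal X\, t^n$; if $x_1, \dots, x_s$ generate $\mathcal X$ over $\mathcal C$, then in degree $n$ one has $\mathcal I^n \mathcal X\, t^n = \mathcal R_n \cdot \{x_1, \dots, x_s\}$, so $\mathcal M$ is generated over $\mathcal R$ by the degree-zero elements $x_1, \dots, x_s$ and is, in particular, a finitely generated $\mathcal R$-module.

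The key observation is that $\mathcal N := \bigoplus_{n \ge 0} \left( (\mathcal I^n \mathcal X) \cap \mathcal F \right) t^n$ is a graded $\mathcal R$-submodule of $\mathcal M$: for $a \in \mathcal I^m$ and $x \in (\mathcal I^n \mathcal X) \cap \mathcal F$ we get $ax \in \mathcal I^{m+n} \mathcal X$, and $ax \in \mathcal F$ because $\mathcal F$ is a $\mathcal C$-submodule, hence $ax \in (\mathcal I^{m+n} \mathcal X) \cap \mathcal F$. Since $\mathcal R$ is Noetherian, $\mathcal N$ is a finitely generated $\mathcal R$-module; choose homogeneous generators and let $c$ be the maximum of their degrees. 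For $n \ge c$ and a generator of degree $d \le c$, the identity $\mathcal I^{n-d} t^{n-d} = (\mathcal I^{n-c} t^{n-c})(\mathcal I^{c-d} t^{c-d})$ inside $\mathcal R$ shows that the degree-$n$ contribution of that generator already lies in $\mathcal R_{n-c} \cdot \mathcal N_c$; summing over the generators gives $\mathcal N_n = \mathcal R_{n-c} \cdot \mathcal N_c$, that is, $(\mathcal I^n \mathcal X) \cap \mathcal F = \mathcal I^{n-c}\left( (\mathcal I^c \mathcal X) \cap \mathcal F \right)$ for all $n \ge c$, which is $(\ref{eq:ArtinRees})$. The reverse inclusion $\mathcal I^{n-c}\left( (\mathcal I^c \mathcal X) \cap \mathcal F \right) \subseteq (\mathcal I^n \mathcal X) \cap \mathcal F$ needs no argument, as the left-hand side lies both in $\mathcal I^{n-c} \mathcal I^c \mathcal X = \mathcal I^n \mathcal X$ and in $\mathcal I^{n-c} \mathcal F \subseteq \mathcal F$.

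The only genuine idea in this proof is the passage to the Rees algebra: once one realises that the stability statement $(\ref{eq:ArtinRees})$ is really the finite generation of the graded $\mathcal R$-module $\bigoplus_n \left( (\mathcal I^n \mathcal X) \cap \mathcal F \right) t^n$ over the Noetherian ring $\mathcal R$, the rest is bookkeeping. Thus the main (and rather modest) obstacle is conceptual — exhibiting the right ambient algebra — rather than technical; the supporting facts (Noetherianity of $\mathcal R$ via Hilbert's basis theorem, finite generation of $\mathcal M$, the submodule property of $\mathcal N$) are all one-line verifications.
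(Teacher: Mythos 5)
Your proof is correct: it is the standard Rees-algebra argument (pass to $\mathcal R=\bigoplus_n \mathcal I^n t^n$, note that $\bigoplus_n (\mathcal I^n\mathcal X\cap\mathcal F)t^n$ is a graded submodule of the finitely generated $\mathcal R$-module $\bigoplus_n \mathcal I^n\mathcal X\, t^n$, and take $c$ to bound the degrees of its generators), and each step is justified. The paper itself gives no proof of this statement — it is the classical Artin--Rees lemma, quoted from the literature — so your argument simply reproduces the canonical proof of the cited result; the only cosmetic remark is that to get a \emph{positive} integer $c$ as in the statement you may replace your $c$ by $\max(c,1)$, which changes nothing in the argument.
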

The classical formulation of the Artin-Rees lemma is more general, but we stated the form which is most directly applicable to our situation. In fact, we need the following immediate consequence of \eqref{eq:ArtinRees}, applied for $n=c+1$: 
\begin{align} \label{eq:ArtinRees2} \mathcal I^{c+1} \mathcal X \cap \mathcal F \subset \mathcal I \mathcal F \end{align}

For $ \mathcal O, \mathcal I, \mathcal F, \mathcal X$ as in Theorem \ref{thm:ArtinRees}, we call \emph{Artin-Rees bound} of $\mathcal F$ in $\mathcal X$ at $\mathcal I$ the smallest integer that satisfies Condition \eqref{eq:ArtinRees}.

\begin{theorem}\label{thm:arnilpotence}
Let $\mathcal F$ be a locally real analytic singular foliation and $L$ a leaf. Then $A_L^{c+1} =0$ where $c$ is the Artin-Rees bound of $ \mathcal F$ in $ \mathfrak X(M)$ at $I_L$. In particular, the Lie algebra bundle $A_L^2$ is nilpotent. 
\end{theorem}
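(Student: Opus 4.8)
We want to show $A_L^{c+1} = 0$, where $c$ is the Artin-Rees bound. By Lemma~\ref{lem:filtration}, $\Gamma(A_L^{c+1}) = \Gamma(A_L)^{c+1} = \frac{\mathcal F \cap I_L^{c+1}\mathfrak X(M)}{I_L\mathcal F \cap I_L^{c+1}\mathfrak X(M)}$. The plan is to prove that the numerator is contained in $I_L\mathcal F$, so the quotient vanishes, and since this holds at the level of sections, the vector bundle $A_L^{c+1}$ is zero.

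The heart of the argument is local, so by Lemma~\ref{lem:ra=smooth} I would pass to the real-analytic model: work in a coordinate neighborhood $U$ where $\mathcal F$ has real-analytic generators, take $\mathcal C$ the Noetherian ring of real-analytic functions, $\mathcal X = \mathcal C^n$ (the real-analytic vector fields, identified with the free module via the coordinate frame), $\mathcal F$ the finitely generated submodule generated by the $X_i$, and $\mathcal I$ the ideal of real-analytic functions vanishing on $L\cap U$. Then Artin-Rees (Theorem~\ref{thm:ArtinRees}), in the form \eqref{eq:ArtinRees2}, gives exactly $I_L^{c+1}\mathfrak X(M) \cap \mathcal F \subset I_L\mathcal F$ at the real-analytic level; by Lemma~\ref{lem:ra=smooth} this transfers to the smooth filtration on $\Gamma(A_L)$, so $\Gamma(A_L)^{c+1} = 0$. (Here I should be a little careful that the Artin-Rees bound computed in the real-analytic model controls the smooth filtration — this is precisely the content of the faithful flatness used in Lemma~\ref{lem:ra=smooth}, which identifies the two filtered Lie algebras.)

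For the nilpotence of $A_L^2$: the filtration $A_L^2 \supset A_L^3 \supset \cdots \supset A_L^{c+1} = 0$ is finite, and the bracket estimate $[\Gamma(A_L)^i, \Gamma(A_L)^j] \subset \Gamma(A_L)^{i+j-1}$ shows that each term of the lower central series of the Lie algebra bundle $A_L^2$ drops at least one step in the filtration: $[A_L^2, A_L^2] \subset A_L^3$, $[A_L^2,[A_L^2,A_L^2]] \subset A_L^4$, and inductively the $k$-fold bracket lands in $A_L^{k+2}$, hence in $0$ once $k+2 > c$. Since this holds fiberwise and the bound is uniform, $A_L^2$ is a bundle of nilpotent Lie algebras of nilpotency degree at most $c-1$.

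The main obstacle is the transfer step between the real-analytic and smooth worlds: one must make sure that the Artin-Rees bound $c$ obtained from the Noetherian real-analytic ring genuinely bounds the smooth filtration, rather than just the real-analytic one. Once Lemma~\ref{lem:ra=smooth} is invoked to identify $\Gamma(A_L)$ with $\mathcal F^{ra}/\mathcal I\mathcal F^{ra}$ as \emph{filtered} objects, this becomes a formality, but it is the conceptual linchpin; the rest is the bookkeeping of \eqref{eq:ArtinRees2} and the elementary filtered-Lie-algebra nilpotence argument above.
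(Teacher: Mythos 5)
Your proposal is correct and follows essentially the same route as the paper: localize near a point of $L$, pass to the real-analytic model via Lemma~\ref{lem:ra=smooth}, apply Artin--Rees in the Noetherian ring of real-analytic functions to get \eqref{eq:ArtinRees2}, and transfer back through the filtered isomorphism to conclude $\Gamma(A_L)^{c+1}=0$. Your explicit lower-central-series bookkeeping for the nilpotence of $A_L^2$ is just a spelled-out version of what the paper leaves implicit from $[\Gamma(A_L)^i,\Gamma(A_L)^j]\subset\Gamma(A_L)^{i+j-1}$.
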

\begin{proof}
Let $p\in L$. Upon restriction to a neighbourhood $U$ of $p\in M$, we may assume that $\mathcal F|_U$ has real analytic generators $(X_i)_{i=1}^r $ in some local coordinates. By Lemma \ref{lem:ra=smooth}, we may use the real analytic holonomy Lie algebroid. Artin-Rees Theorem \ref{thm:ArtinRees} applied in the following context: 
\begin{itemize}
    \item $\mathcal C $ is the algebra of real analytic functions on $U$ (which is Noetherian by Theorem III.3.8 in \cite{MR0212575}),
    \item the ideal $\mathcal I$ of real analytic functions on $U$ vanishing along $L$,
    \item  the $\mathcal C $-module $ \mathcal X $ of real analytic vector fields on $U$,
    \item the sub-module  $\mathcal F^{ra} \subset  \mathcal X $ generated be the vector fields $(X_i)_{i=1}^r $,
 \end{itemize}
there exists $ c \in \mathbb N$ that satisfies \eqref{eq:ArtinRees}
and therefore \eqref{eq:ArtinRees2}. 
Geometrically \eqref{eq:ArtinRees2} applied to $n=c+1$ implies that a vector field in $ \mathcal F^{ra}$ 
that vanishes at order $c+1$ along $L$ belongs to $\mathcal I \mathcal F^{ra} $. In terms of the filtration in Lemma \ref{lem:filtration}, it means that  $A_L^{c+1} =0$. In particular $ A_L^2$ is a bundle of nilpotent Lie algebras of depth less of equal to $  c+1 $. 
\end{proof}
The following example (inspired by Grabowska and Grabowski \cite{GG}) illustrates, that the Artin-Rees bound can be arbitrarily large in our situation, i.e. that the filtration on $A_L$ may have arbitrarily many non-zero terms.

\begin{example}
On $M={\mathbb R}^n $, let us give to the coordinates $(x_1, \dots, x_n)  $ the weights $(1, \dots, n) $. Real analytic functions on $M$ then become a filtered algebra. 
Real analytic vector fields that preserve this filtration form a module $\mathcal F $ stable under Lie bracket and generated by the finite family
$$   \left\{ ~  x_1^{i_1}  \dots x_n^{i_n} \tfrac{\partial }{ \partial x_k } ~\middle| ~  k \in [1 : n], ~ i_1\hbox{ }  \dots,i_k \in [ 0:n] \hbox{ and }  i_1 + 2 i_2 + \cdots + n i_n  \geq k ~ \right\}. $$
Since all vector fields in $\mathcal F $ vanish at the origin $0$, $L=\{0\}$ is a leaf. The vector field $ x_1^n \tfrac{\partial}{\partial x_n} $ is an element in $ \mathcal F \cap I^n_L \mathfrak X(M) $, but does not belong to $ I_L \mathcal F$. This implies that the Artin-Rees bound $c$ is greater or equal to $n$.
Since $ I^i_L \mathfrak X(M)  \subset \mathcal F $ for $i \geq n$, we have $ I^{n+1}_L \cap \mathfrak X(M)  \subset \mathcal F  =  I_L \left(I^{n}_L \cap \mathfrak X(M) \right)$, so that $c=n$. 
\end{example}

 The Artin-Rees bound also bounds the possible degrees of generators of a singular foliation which is preserved by some Euler vector field, as stated below.

\begin{proposition}
\label{prop:homogeneous}
Let $ \mathcal F$ be a real analytic singular foliation in a neighborhood of $0$ in $\mathbb R^n $ made of vector fields that vanish at $0$. If $ \mathcal F$ is preserved by the Euler vector field $\mathcal E=\sum_{i=1}^n x_i\tfrac{\partial}{\partial x_i}$, then: 
\begin{enumerate}
    \item Every homogeneous component of a vector field in $ \mathcal F $ belongs to $ \mathcal F$.
    \item $ \mathcal F$ admits homogeneous generators whose degrees are less or equal to the Artin-Rees bound of $\mathcal F$ at $0$.
\end{enumerate}
\end{proposition}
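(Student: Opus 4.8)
The plan is to work with germs at the origin, so that $\mathcal C = \mathbb R\{x_1,\dots,x_n\}$ is a Noetherian local ring with maximal ideal $\mathfrak m = I_{\{0\}}$, and $\mathcal F$ is a finitely generated $\mathcal C$-submodule of the module $\mathfrak X$ of analytic vector field germs, consisting of vector fields vanishing at $0$ and satisfying $[\mathcal E,\mathcal F]\subseteq \mathcal F$ (the generators we will produce are homogeneous polynomial vector fields, hence globally defined, so this localisation is harmless). The relevant grading operator is $\mathrm{ad}_{\mathcal E}$: it acts on the monomial vector field $x^\alpha\partial_{x_i}$ by the scalar $|\alpha|-1$, and I write $X^{(k)}$ for the weight-$k$ component of a vector field $X$, a homogeneous polynomial vector field whose coefficients have degree $k+1$; note $X^{(k)}\in \mathfrak m^{k+1}\mathfrak X$ and, since every element of $\mathcal F$ vanishes at $0$, only weights $k\geq 0$ occur. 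Throughout, $c$ denotes the Artin--Rees bound, so that $\mathfrak m^{c+1}\mathfrak X\cap \mathcal F\subseteq \mathfrak m\mathcal F$ by \eqref{eq:ArtinRees2}.

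For part (1), I would iterate the invariance hypothesis to get $(\mathrm{ad}_{\mathcal E})^j X\in \mathcal F$ for all $X\in\mathcal F$ and $j\geq 0$, with $(\mathrm{ad}_{\mathcal E})^j X=\sum_{k\geq 0}k^j X^{(k)}$. Fixing $m$ and $N\geq m$, Lagrange interpolation provides scalars $c_0,\dots,c_N$ with $\sum_{j=0}^N c_j k^j=\delta_{km}$ for $0\leq k\leq N$, so that $\sum_j c_j(\mathrm{ad}_{\mathcal E})^j X=X^{(m)}+R_N$ with $R_N=\sum_{k>N}\bigl(\sum_j c_j k^j\bigr)X^{(k)}\in \mathfrak m^{N+2}\mathfrak X$. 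The left-hand side lies in $\mathcal F$, hence $X^{(m)}\in\mathcal F+\mathfrak m^{N+2}\mathfrak X$ for every $N$; since $\mathcal F$ is finitely generated over the Noetherian local ring $\mathcal C$, Krull's intersection theorem gives $\bigcap_N(\mathcal F+\mathfrak m^N\mathfrak X)=\mathcal F$, and therefore $X^{(m)}\in\mathcal F$.

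For part (2), part (1) makes the graded $\mathbb R[x]$-module $\widetilde{\mathcal F}:=\bigoplus_{k\geq 0}\mathcal F^{(k)}$ --- with $\mathcal F^{(k)}$ the finite-dimensional space of weight-$k$ vector fields in $\mathcal F$ --- a submodule of $\mathcal F$. I would first show $\widetilde{\mathcal F}$ is generated over $\mathbb R[x]$ in weights $\leq c$: for $k>c$ and $X^{(k)}\in\mathcal F^{(k)}$ one has $X^{(k)}\in \mathfrak m^{k+1}\mathfrak X\cap\mathcal F\subseteq \mathfrak m\mathcal F$, so writing $X^{(k)}=\sum_i f_i Y_i$ with $f_i\in\mathfrak m$, $Y_i\in\mathcal F$ and extracting the weight-$k$ part, $X^{(k)}=\sum_i\sum_{d\geq 1}f_i^{(d)}Y_i^{(k-d)}$ lies in $(x_1,\dots,x_n)\widetilde{\mathcal F}$, using part (1) once more for the components $Y_i^{(k-d)}$. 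Graded Nakayama then yields finitely many homogeneous generators $g_1,\dots,g_s$ of $\widetilde{\mathcal F}$ of weights $\leq c$. It remains to check these generate $\mathcal F$ over $\mathcal C$: set $\mathcal G=\sum_j\mathcal C\,g_j\subseteq\mathcal F$; for $X\in\mathcal F$ the truncation $\sum_{k\leq c}X^{(k)}$ lies in $\sum_j\mathbb R[x]\,g_j\subseteq\mathcal G$, while $X-\sum_{k\leq c}X^{(k)}=\sum_{k>c}X^{(k)}\in\mathfrak m^{c+1}\mathfrak X\cap\mathcal F\subseteq\mathfrak m\mathcal F$; hence $\mathcal F=\mathcal G+\mathfrak m\mathcal F$, and ordinary Nakayama applied to the finitely generated module $\mathcal F/\mathcal G$ forces $\mathcal F=\mathcal G$.

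The main obstacle --- and the reason neither part is a purely formal manipulation of the weight decomposition --- is the jump from ``each homogeneous component is approximable by elements of $\mathcal F$ modulo high-order terms'' (and, in part (2), from ``$X$ is an infinite $\mathcal C$-combination of the $g_j$'') to genuine membership: an analytic Taylor series has infinitely many homogeneous components, so one cannot just add up the finitely many Vandermonde combinations. This is precisely what Krull's intersection theorem ($\mathfrak m$-adic closedness of $\mathcal F$) supplies in part (1) and what Nakayama's lemma supplies in part (2); the rest is bookkeeping with the grading and with the Artin--Rees inequality \eqref{eq:ArtinRees2}.
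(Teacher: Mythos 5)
Your argument is correct, but it takes a genuinely different route from the paper's. The paper proves item (1) via Lemma \ref{lem:adminusk}: it builds an explicit integral inverse $Q^k$ of $P^k=[\mathcal E,\cdot]-(k-1)\mathrm{id}$ on $\mathcal F\cap I^{k+1}\mathfrak X(\mathbb R^n)$, and the decisive analytic input is that $\mathcal F$ is preserved by the flow of $\mathcal E$ and is closed in the Fr\'echet topology (Tougeron/Malgrange), which is what keeps the integral inside $\mathcal F$; item (2) is then obtained by homogenising representatives of a basis of the graded pieces $A_L^k/A_L^{k+1}$ of the isotropy Lie algebra and invoking Proposition 1.5 of \cite{AS} to see that these representatives generate. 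You instead stay entirely inside commutative algebra over the Noetherian local ring $\mathbb R\{x_1,\dots,x_n\}$: iterated $\mathrm{ad}_{\mathcal E}$ plus Lagrange interpolation isolates each homogeneous component modulo $\mathfrak m^{N}\mathfrak X$ for arbitrarily large $N$, and Krull's intersection theorem (i.e.\ $\mathfrak m$-adic closedness of the submodule $\mathcal F$ of the free module $\mathfrak X$) replaces the Fr\'echet-closedness argument; for item (2), graded Nakayama on $\bigoplus_k\mathcal F^{(k)}$ followed by ordinary Nakayama, with \eqref{eq:ArtinRees2} as the only input, replaces the appeal to \cite{AS}. Your route buys a proof with no flows, integrals or functional analysis; the paper's route buys a statement proved directly for the $C^\infty$-module $\mathcal F$, and its operators $P^k,Q^k$ are reused verbatim in the leafwise setting of Theorem \ref{theo:EulerExists}.

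Two points to tighten. First, the proposition concerns the $C^\infty$-module generated by real analytic vector fields, while your proof lives in the analytic germ category; the transfer is easy but should be stated: the homogeneous components of a smooth element $\sum_i f_iX_i$ are finite sums of polynomial Taylor components of the $f_i$ times homogeneous components of the analytic generators $X_i$, and germ-level generation by your $g_j$ yields $C^\infty$-generation on a small enough neighbourhood because the analytic coefficients expressing the $X_i$ through the $g_j$ (and conversely) converge there --- this needs slightly more than the remark that the $g_j$ are polynomial. Second, with the paper's convention that ``degree $k$'' means $[\mathcal E,X]=(k-1)X$ (coefficients of degree $k$), your cut-off ``weights $\le c$'' corresponds to degrees $\le c+1$; since \eqref{eq:ArtinRees2} already applies to weight-$k$ components as soon as $k\ge c$ (their coefficients have degree $k+1\ge c+1$), running your Nakayama step at that threshold gives exactly the claimed bound $\le c$.
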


Notice that we will extend Proposition \ref{prop:homogeneous}  to neighbourhoud of leaves (see Theorem \ref{theo:EulerExists} below). 
To prove Proposition \ref{prop:homogeneous}, we start with a lemma:

\begin{lemma}
\label{lem:adminusk}
For $k\geq 1$, the operator  
\begin{align*}
 P^k:\mathfrak{X}(\mathbb R^n)&\to \mathfrak{X}(\mathbb R^n)\\
        X                     &\mapsto ([{\mathcal E},X ]- (k{-}1)\cdot X)
\end{align*}
restricts to an invertible isomorphism of $ \mathcal F\cap I^{k+1}\mathfrak X(\mathbb R^n)$, where $I=I_{\{0\}}$ is the ideal of functions vanishing at $0$.
\end{lemma}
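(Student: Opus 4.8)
The plan is to realize $P^k$ on $V:=\mathcal F\cap I^{k+1}\mathfrak X(\mathbb R^n)$ as a bijection by a three‑step argument: checking that $P^k$ preserves $V$, proving injectivity by a homogeneity obstruction, and building an explicit two‑sided inverse by integrating along the dilation flow. The guiding fact is that the flow of $\mathcal E$ is the dilation $\phi_s\colon x\mapsto e^sx$, and that $\mathrm{ad}_{\mathcal E}=[\mathcal E,-]$ multiplies a vector field whose coefficients are homogeneous of degree $m$ by $m-1$; hence $P^k$ multiplies it by $m-k$, which is nonzero — indeed $\geq 1$ — precisely when $m\geq k+1$. That $P^k$ sends $V$ into $V$ is immediate: $[\mathcal E,\mathcal F]\subset\mathcal F$ since $\mathcal F$ is $\mathcal E$‑invariant, $\mathcal E$ preserves the ideal $I$ (because $\mathcal E(x_i)=x_i\in I$) and hence $I^{k+1}\mathfrak X(\mathbb R^n)$, and $(k-1)X$ clearly stays in $V$.

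For injectivity, suppose $X\in V$ satisfies $P^kX=0$, i.e.\ $[\mathcal E,X]=(k-1)X$. Feeding this into the transport equation $\tfrac{d}{ds}(\phi_s)_*X=-[\mathcal E,(\phi_s)_*X]$ and using uniqueness of solutions gives $(\phi_s)_*X=e^{-(k-1)s}X$; evaluated at a point this reads $X(tx)=t^kX(x)$ for $t\in(0,1]$, so $X$ is exactly degree‑$k$ homogeneous. But $X\in I^{k+1}\mathfrak X(\mathbb R^n)$ forces $X(x)=o(|x|^k)$ near the origin, which rules out exact degree‑$k$ homogeneity unless $X=0$.

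For surjectivity, given $Y\in V$ I propose the inverse
\[
QY\ :=\ \int_0^{\infty}e^{(k-1)s}\,(\phi_s)_*Y\ ds .
\]
Because $Y$ vanishes to order $\geq k+1$ at $0$, one has $(\phi_s)_*Y(x)=e^sY(e^{-s}x)=O\!\big(e^{-ks}|x|^{k+1}\big)$, and each derivative of $(\phi_s)_*Y$ gains the same factor $e^{-ks}$ over the corresponding derivative of $Y$; multiplying by $e^{(k-1)s}$ yields an integrand bounded by $e^{-s}$ times a fixed vector field, so the integral converges — in the $C^\infty$ topology, and, since the analytic generators extend holomorphically near $0$, also as a real‑analytic vector field — to an element of $I^{k+1}\mathfrak X(\mathbb R^n)$. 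Then, rewriting $[\mathcal E,(\phi_s)_*Y]=-\tfrac{d}{ds}(\phi_s)_*Y$ and integrating by parts (the boundary contribution at $s=\infty$ vanishes, since $e^{ks}|Y(e^{-s}x)|\to0$), one obtains $[\mathcal E,QY]=Y+(k-1)QY$, i.e.\ $P^k(QY)=Y$. Combined with injectivity this shows $P^k|_V$ is bijective with inverse $Q|_V$ — provided $QY\in\mathcal F$.

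This last membership is the genuine obstacle, and the plan is to get it by integrating the infinitesimal invariance $[\mathcal E,\mathcal F]\subset\mathcal F$ up to invariance under the dilation flow. Fix real‑analytic generators $X_1,\dots,X_r$ of $\mathcal F$ near $0$ and write $[\mathcal E,X_i]=\sum_j a_{ij}X_j$. Then $s\mapsto(\phi_s)_*X_i$ solves the linear system $\tfrac{d}{ds}\xi_i=-\sum_j(a_{ij}\circ\phi_{-s})\,\xi_j$ with $\xi_i(0)=X_i$; solving the matrix ODE $\dot B(s)=-A(s)B(s)$, $B(0)=\mathrm{Id}$, with $A(s)=(a_{ij}\circ\phi_{-s})$ produces the solution $\xi_i(s)=\sum_j b_{ij}(s,\cdot)X_j$, so by uniqueness $(\phi_s)_*X_i\in\mathcal F$ for all $s\geq0$, hence $(\phi_s)_*Y\in\mathcal F$. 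Finally $QY$ is a limit of Riemann sums of the $\mathcal F$‑valued curve $s\mapsto e^{(k-1)s}(\phi_s)_*Y$, and a finitely generated (coherent) submodule of $\mathfrak X(\mathbb R^n)$ is closed in the relevant topology, so $QY\in\mathcal F$, finishing the proof. The two points needing care are thus the flow‑invariance of $\mathcal F$ and the convergence/regularity bookkeeping of the improper integral; the underlying algebra — $P^k$ being ``diagonal with eigenvalue $m-k$'' on homogeneous pieces — is what makes the construction run.
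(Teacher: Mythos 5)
Your proposal is correct and takes essentially the same route as the paper: after the substitution $t=e^{-s}$ your operator $Q$ is exactly the paper's $Q^k$, injectivity is the same degree-$k$ homogeneity obstruction against vanishing to order $k+1$, and the key membership $QY\in\mathcal F$ rests on the same two facts used in the paper --- invariance of $\mathcal F$ under the dilation flow of $\mathcal E$ (cited there from Androulidakis--Skandalis, Prop.~1.6, and re-derived by you via a linear ODE argument) and closedness of $\mathcal F$ in the $C^\infty$ (Fr\'echet) topology, which is the Malgrange/Tougeron theorem the paper invokes. Two small cautions only: that closedness holds \emph{because} the generators are real analytic (an arbitrary finitely generated $C^\infty$-submodule need not be closed), and the aside that $QY$ converges ``as a real-analytic vector field'' should be dropped, since $Y\in\mathcal F\cap I^{k+1}\mathfrak X(\mathbb R^n)$ is in general only smooth.
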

\begin{proof}
The operator $P^k$ restricts to an endomorphism of $\mathcal F\cap I^{k+1}\mathfrak X(\mathbb R^n)$, since  the identity map and the Lie bracket with $\mathcal E$ preserve both $\mathcal F$ and $I^{k+1}\mathfrak X(\mathbb R^n)$. Furthermore, $P^k|_{\mathcal F\cap I^{k+1}\mathfrak X(\mathbb R^n)}$ is injective as the kernel of $P^k$ is given by homogeneous vector fields of degree $k$, a space in trivial intersection with $I^{k+1}\mathfrak X(\mathbb R^n)$.\\

We claim that an explicit inverse to $P^k|_{I^{k+1}\mathfrak X(\mathbb R^n)}$ is given by
\begin{align}\label{eq:Yexplicit}
    Q^k: ~~~~&I^{k+1}\mathfrak X(\mathbb R^n)~~~~~~~~~~~~\to  ~~~~~I^{k+1}\mathfrak X(\mathbb R^n)\\ \nonumber
    &Z  = \sum_{i=1}^n z_i(x) \tfrac{\partial}{\partial x_i}~~~~\mapsto ~~~~~ Q^k(Z)=\int_{t=0}^1  \left( \frac{1}{t^{k+1}} \sum_{i=1}^n z_i(tx) \tfrac{\partial}{\partial x_i}  \right) dt 
\end{align}
The convergence of the integral is granted by the fact that all functions $t \mapsto z_i(tx) $ vanish at order $k+1$ at $0$.
A simple integration by part gives $P^k\circ Q^k=\mathrm{id}_{I^{k+1}\mathfrak X(\mathbb R^n)}$.  It is also clear that each one of the functions $ \int_{t=0}^1 \frac{1}{t^{k+1}} z_i(tx)dt$ belongs to $ I^{k+1}$, so that the image of $Q^k$ is indeed in $I^{k+1}\mathfrak X(\mathbb R^n)$.\\

To conclude the proof, we need to show that $Q^k$ preserves the subspace $  \mathcal F\cap I^{k+1}\mathfrak X(\mathbb R^n)$. First, let us interpret Equation \eqref{eq:Yexplicit} as:
 \begin{align}
\label{eq:YNonexplicit} Q^k(Z) = \int_{t=0}^1  \frac{1}{t^{k}} \mu^t _* (Z)   dt  
\end{align}
where $\mu^t$ is the homothety $x \mapsto \tfrac{x}{t} $.  Since $ \mu^t$ is the the flow at time $-{\mathrm{ln}}(t)$ of $ \mathcal E$, it preserves $\mathcal F $ by Proposition 1.6 in \cite{AS}. In particular, if  $Z \in \mathcal F$, then $\mu^{t}_* (X) \in \mathcal F$ for all $t \in ]0,1]$. Now, since $ \mathcal F$ admits real analytic generators, $ \mathcal F$ is closed with respect to the Fr\'echet topology (see Theorem 2 in Tougeron \cite{MR240826} - the result is attributed to Malgrange). In particular, $ \mathcal F$ is stable under the integration \eqref{eq:YNonexplicit}, so that if $Z\in \mathcal F\cap I^{k+1}\mathfrak X(\mathbb R^n)$, then $ Q^k(Z) \in \mathcal F$. This proves the lemma. 
\end{proof}

\begin{proof} (of Proposition \ref{prop:homogeneous}).
Let us decompose $X \in \mathcal F\cap  I^{k}\mathfrak X(\mathbb R^n)$ as
 $X =  X^{(k)} + R $, with $X^{(k)}$ homogeneous of degree $ k$, and $R \in I^{k+1} \mathfrak X(\mathbb R^n)$. As $ X^{(k)}$ is in the kernel of $P^k$, we have 
 $$ P^k(R)=P^k(X)\in \mathcal F\cap I^{k+1}\mathfrak X(\mathbb R^n).$$
By Lemma \ref{lem:adminusk}, this implies that $R \in  \mathcal F\cap I^{k+1} \mathfrak X(\mathbb R^n) $, so that $ X^{(k)} = X - R \in \mathcal F$. This proves that the lowest component of an element in $ \mathcal F$ is in $\mathcal F $. The first item of the proposition follows by an immediate finite induction.
  
  For $ (e_1, \dots, e_b)$ a local trivialization of $ A_L^{k} / A_L^{k+1} $, let us choose $ (X_1, \dots, X_b )$  a $b$-tuple of elements in $ \mathcal F $ that represent it. The $b$-tuple $ (X_1^{k}, \dots, X_b^{k})$ of their homogeneous components  of degree $k$ is again made of element of $ \mathcal F$ by the first item, and still represents $ (e_1, \dots, e_b)$. Applying this procedure for all $k =0, \dots, r$,
  we obtain a basis of $ \mathfrak g=A_L=A_L^0$ which are all represented by homogeneous vector fields in $ \mathcal F$ of degree less than the Artin-Rees bound.
  In view of Proposition 1.5 item a in \cite{AS}, these vector fields are generators of $ \mathcal F$. This proves the second item.
\end{proof}

\begin{remark}
In the case $\mathfrak g_{0}= \mathfrak g_{0}^{lin} $, Proposition \ref{prop:homogeneous} reduces to Theorem 8.1 in Dominique Cerveau \cite{Cerveau} - a result extended to a neighborhood of a leaf by Marco Zambon \cite{ZambonPrivate}.
\end{remark}

\subsubsection{The Levi exact sequence and the semi-simple holonomy Lie algebroid} 

\label{sec:semisection}

For a Lie algebra $\mathfrak g $, the Levi-Malcev decomposition theorem goes as follows: \emph{(i)}  $ \mathfrak g$ has a unique maximal solvable ideal $\mathfrak{rad}(\mathfrak g) $,  \emph{(ii)} the quotient  $ \mathfrak g /\mathfrak{rad}(\mathfrak g) $ is a semi-simple Lie algebra $  \mathfrak g^s $, and  \emph{(iii)} there is a section $  \mathfrak g^s  \hookrightarrow  \mathfrak g $. 

The Levi-Malcev theorem does not easily generalize to transitive Lie algebroid. Remark \ref{rmk:Levi(iii)} gives a counter-example to step \emph{(iii)}, but steps \emph{(i)} and \emph{(ii)} admit generalizations that we now describe: For $A\to L$ a transitive Lie algebroid with anchor $\rho$ the isotropy Lie algebra bundle $\mathfrak g := {\mathrm ker}(\rho) $ is locally trivial (\cite[Theorem 8.2.1]{Mackenzie}), so that the \emph{fiberwise radical}, i.e. the disjoint union $ \coprod_{l \in L}  \mathfrak{rad} (\mathfrak g_l) $ is indeed a Lie algebra bundle over $L$. We denote it by ${\mathfrak{rad}}(A) $. Since $A$ is, near every point $m \in L $, a direct product of $TL \to L$ with its isotropy Lie algebra at $m $ (see, e.g. Theorem 1.2 in \cite{Zung}),  sections of  ${\mathfrak{rad}}(A) $ form an ideal of $ \Gamma(A)$. This proves the following proposition:

\begin{proposition}
For every transitive Lie algebroid $A$ over $L$, 
the quotient $ A/\mathfrak{rad}(A) $ is a transitive Lie algebroid over $L$, with semi-simple isotropies, and  
\begin{align}\label{eq:shortLevi}
\xymatrix{\mathfrak{rad}(A)\ar[r]&A\ar[r]&{A} / {\mathfrak{rad}(A)}}
\end{align}
is a short exact sequence of Lie algebroids.
\end{proposition}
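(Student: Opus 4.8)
The plan is to verify three things: that $\mathfrak{rad}(A)$ is a genuine vector subbundle (hence a Lie algebra bundle) of $\mathfrak{g}=\ker\rho$, that its sections form an ideal of $\Gamma(A)$, and that the quotient $A/\mathfrak{rad}(A)$ is then automatically a transitive Lie algebroid with semisimple isotropy fibers, so that \eqref{eq:shortLevi} is exact. Most of the work has already been pushed into the paragraph preceding the statement, so the proof is essentially a matter of assembling those observations cleanly.

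First I would recall the local normal form for transitive Lie algebroids: near any $m\in L$ there is a neighbourhood $U$ and a Lie algebroid isomorphism $A|_U\cong TU\times\mathfrak{g}_m$ (with the product bracket), as quoted from Theorem 1.2 in \cite{Zung}, equivalently from \cite[Ch.~8]{Mackenzie}. Under this isomorphism the isotropy bundle $\mathfrak{g}|_U$ becomes the trivial bundle $U\times\mathfrak{g}_m$, and therefore $\coprod_{l\in U}\mathfrak{rad}(\mathfrak{g}_l)$ becomes $U\times\mathfrak{rad}(\mathfrak{g}_m)$, a trivial subbundle of constant rank. This simultaneously shows that $\mathfrak{rad}(A):=\coprod_{l\in L}\mathfrak{rad}(\mathfrak{g}_l)$ is a smooth vector subbundle of $\mathfrak{g}$ of locally constant rank and that it is a Lie algebra bundle (the fibrewise radical is a Lie ideal in each fibre, and the local trivialization respects brackets). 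Here one uses that the radical of $\mathfrak{g}_m$ is a \emph{characteristic} ideal, so it is preserved by the parallel transport implicit in the trivialization and the construction is independent of the chosen local isomorphism.

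Next I would check the ideal property: in the local model $A|_U\cong TU\times\mathfrak{g}_m$, a section of $\mathfrak{rad}(A)$ is (locally) a function $U\to\mathfrak{rad}(\mathfrak{g}_m)$, a section of $A$ is a pair $(V,\phi)$ with $V$ a vector field on $U$ and $\phi\colon U\to\mathfrak{g}_m$, and the bracket $[(V,\phi),(0,\psi)]=(0,\,V(\psi)+[\phi,\psi])$ lands in $\mathrm{Maps}(U,\mathfrak{rad}(\mathfrak{g}_m))$ because $\mathfrak{rad}(\mathfrak{g}_m)$ is an ideal of $\mathfrak{g}_m$ and is a linear subspace (so closed under differentiation of maps into it). Since being an ideal is a local condition on sections, $\Gamma(\mathfrak{rad}(A))$ is an ideal of $\Gamma(A)$, and in particular $[\Gamma(A),\Gamma(\mathfrak{rad}(A))]\subset\Gamma(\mathfrak{rad}(A))$, which in turn forces $\rho(\mathfrak{rad}(A))=0$, i.e. $\mathfrak{rad}(A)\subset\mathfrak{g}$ is an ideal of Lie algebroids. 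Then $A/\mathfrak{rad}(A)$ inherits a Lie algebroid structure over $L$ in the standard way (the anchor descends since $\mathfrak{rad}(A)\subset\ker\rho$), it is transitive because $\rho$ was already surjective, its isotropy bundle is $\mathfrak{g}/\mathfrak{rad}(A)$ whose fibres are $\mathfrak{g}_l/\mathfrak{rad}(\mathfrak{g}_l)$, semisimple by Levi--Malcev for Lie algebras; exactness of \eqref{eq:shortLevi} is then immediate from the construction.

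I do not expect a serious obstacle here; the only point needing a little care is the smoothness/local-triviality of $\mathfrak{rad}(A)$, i.e. confirming that the fibrewise radical assembles into a subbundle rather than merely a fibrewise-defined subset. This is exactly what the local product structure of transitive Lie algebroids delivers, once one notes that an isomorphism $\mathfrak{g}_l\cong\mathfrak{g}_m$ carries $\mathfrak{rad}(\mathfrak{g}_l)$ onto $\mathfrak{rad}(\mathfrak{g}_m)$, so the rank is locally constant and the transition functions preserve the subbundle. Everything else is a routine transport of the Lie-algebra Levi--Malcev statement through the bundle picture.
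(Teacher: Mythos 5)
Your proposal is correct and follows essentially the same route as the paper: it rests on the local triviality of the isotropy bundle and the local product structure of transitive Lie algebroids (Mackenzie's Theorem 8.2.1, Zung's Theorem 1.2), plus the fact that the fibrewise radical is preserved under Lie algebra isomorphisms, to see that $\mathfrak{rad}(A)$ is a Lie algebra bundle whose sections form an ideal of $\Gamma(A)$. You merely spell out in coordinates (the bracket computation in the model $TU\times\mathfrak g_m$) what the paper leaves implicit, so the two arguments are the same in substance.
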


%

These general considerations lead to the following definition

\begin{definition}
Let $L$ be a leaf of a singular foliation $\mathcal F $.
We call the quotient $\frac{A_L}{\mathfrak{rad}(\mathfrak g_L)}$ the \emph{semi-simple holonomy Lie algebroid} of $L$
and denote it by $ A^s_L$.
\end{definition}

\begin{remark}
\label{rmk:Levi(iii)}
It is not true in general that the short exact sequence \eqref{eq:shortLevi} admits a Lie algebroid section, even for holonomy Lie algebroid $A_L$ of a leaves of a singular foliation.
For instance, for $ A =TM \oplus \mathbb R$ equipped with the Lie algebroid structure associated to a closed $2$-form $ \omega \in \Omega^2(M)$, the semi-simple holonomy Lie algebroid  $A/{\mathfrak{rad}}(A)$ is the tangent Lie algebroid $TM$, but a Lie algebroid section $TM \hookrightarrow A $ exists if and only if $ \omega$ is exact. 
\end{remark}

\begin{proposition}
\label{prop:linTos}
Let $L$ be a leaf of a locally real analytic singular foliation $\mathcal F $.
There is a natural Lie algebroid morphism $\xymatrix{A_L^{lin} \ar@{.>}[r]& A_L^s} $ that makes the following diagram commutative:
 $$ \xymatrix{ A_L \ar[r]\ar[rd] & A_L^{lin}\ar@{.>}[d] \\ & A_L^s}   $$
\end{proposition}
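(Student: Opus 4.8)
The plan is to show that the kernel $A_L^2 = \ker(A_L \to A_L^{lin})$ is contained in the kernel $\mathfrak{rad}(\mathfrak g_L)$ of the projection $A_L \to A_L^s$; then the desired dotted arrow exists by the universal property of the quotient $A_L^{lin} = A_L/A_L^2$, and commutativity of the triangle is automatic. Since both $A_L^2$ and $\mathfrak{rad}(\mathfrak g_L)$ are sub-bundles of the isotropy bundle $\mathfrak g_L = A_L^1$, it suffices to check the fiberwise inclusion $(A_L^2)_p \subset \mathfrak{rad}(\mathfrak g_p)$ for each $p \in L$.

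The key point is Theorem \ref{thm:arnilpotence}: for a locally real analytic singular foliation, $A_L^2$ is a bundle of nilpotent Lie algebras. By Lemma \ref{lem:filtration}, the filtration $A_L^2 \supset A_L^3 \supset \cdots$ satisfies $[\Gamma(A_L)^i,\Gamma(A_L)^j] \subset \Gamma(A_L)^{i+j-1}$, so $A_L^2$ is not merely nilpotent as an abstract Lie algebra bundle but its adjoint action on all of $\mathfrak g_L = A_L^1$ is nilpotent: iterated brackets $[\Gamma(A_L)^2,[\Gamma(A_L)^2,\dots[\Gamma(A_L)^2,\Gamma(A_L)^1]\dots]]$ land in $\Gamma(A_L)^{k}$ for growing $k$, and hence vanish for $k > c+1$ by Theorem \ref{thm:arnilpotence}. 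Thus $(A_L^2)_p$ is a nilpotent ideal of $\mathfrak g_p$ (it is an ideal because $[\Gamma(A_L)^i,\Gamma(A_L)] \subset \Gamma(A_L)^i$ from Lemma \ref{lem:filtration}, so in particular $[\Gamma(A_L^2),\Gamma(\mathfrak g_L)] \subset \Gamma(A_L^2)$). A nilpotent ideal is solvable, hence is contained in the maximal solvable ideal $\mathfrak{rad}(\mathfrak g_p)$ by definition of the radical. This gives $(A_L^2)_p \subset \mathfrak{rad}(\mathfrak g_p) = \mathfrak{rad}(A_L)_p$ for every $p$, i.e. $A_L^2 \subset \mathfrak{rad}(\mathfrak g_L)$ as sub-bundles of $A_L$.

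Consequently the surjection $A_L \to A_L^s = A_L/\mathfrak{rad}(\mathfrak g_L)$ factors through $A_L \to A_L^{lin} = A_L/A_L^2$, producing a Lie algebroid morphism $A_L^{lin} \dashrightarrow A_L^s$ with $A_L \to A_L^{lin} \to A_L^s$ equal to the canonical projection; that is exactly the asserted commutative triangle. I expect the only genuinely delicate point to be confirming that the sub-bundle inclusion holds fiberwise in a way compatible with the bundle structure — but this is immediate since both $A_L^2$ and $\mathfrak{rad}(\mathfrak g_L)$ are honest vector sub-bundles of $A_L$ (the former by Lemma \ref{lem:filtration}, the latter by local triviality of the isotropy and the fact recalled before the Levi exact sequence proposition), and an inclusion of vector bundles can be tested on fibers. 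One should also note that the factored morphism is automatically a Lie algebroid morphism, being induced on a quotient of Lie algebroids by a Lie algebroid morphism vanishing on the relevant ideal.
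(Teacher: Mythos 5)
Your proof is correct and follows essentially the same route as the paper: invoke Theorem \ref{thm:arnilpotence} to see that $\ker(A_L\to A_L^{lin})=A_L^2$ is a bundle of nilpotent Lie algebras, conclude it lies in $\mathfrak{rad}(\mathfrak g_L)$, and factor the projection $A_L\to A_L^s$ through $A_L^{lin}$. The only difference is that you make explicit the point the paper leaves implicit, namely that $A_L^2$ is an \emph{ideal} of $\mathfrak g_L$ (via $[\Gamma(A_L)^i,\Gamma(A_L)]\subset\Gamma(A_L)^i$), which is indeed needed to conclude containment in the radical.
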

\begin{proof}
By Theorem \ref{thm:arnilpotence}, the kernel of $A_L \to A_L^{lin} $ is a bundle of nilpotent Lie algebras. It is therefore contained in ${{\mathfrak{rad}}(A_L) } $, i.e. in the kernel of the natural projection $A_L \to A_L^s $. 
The result follows.
\end{proof}
\subsection{Connection theory}
\subsubsection{Ehresmann or Levi $\mathcal F$-connections and flatness}

According to Proposition \ref{prop:linTos}, for every leaf $L$ of a locally  singular foliation $ \mathcal F$, we have the following sequence of surjective morphisms of transitive Lie algebroids over $L$: 
\begin{align*}
\xymatrix{
\mathcal F\ar@{-->}[rr]&& A_L\ar[rr]&& A_L^{lin} \ar[rr] &&A_L^{s} \ar[rr]&& TL
},
\end{align*}  
where the leftmost arrow is dashed, as it is not a morphism of Lie algebroids. 
The main purpose of this article is to describe the behaviour of $\mathcal F$ in a neighborhood of $L$, using the semi-simple holonomy Lie algebroid $ A_L^s$. 

To start, we will consider as in \cite{LGR} neighbourhoods $U$ of $L$ in $M$ which are small enough in the following sense: they have to admit a projection $\pi \colon U \to L $ such that $ T_x \mathcal F + {\mathrm{ker}}(T_x \pi) = T_x M $ for all $x\in U$.
 These pairs $(U,\pi) $ shall be called \emph{$ \mathcal F$-neighbourhoods} and satisfy several important properties, in particular, by Proposition 2.21 in \cite{LGR}:

\begin{lemma} 
Every locally closed leaf $L$ of a singular foliation $\mathcal F$ admits a $ \mathcal F$-neighbourhood $ (U,\pi)$.
\end{lemma}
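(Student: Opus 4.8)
The goal is to construct, for a locally closed leaf $L$, an $\mathcal F$-neighbourhood $(U,\pi)$, i.e.\ an open set $U \supset L$ together with a submersion $\pi\colon U \to L$ satisfying the transversality condition $T_x\mathcal F + \ker(T_x\pi) = T_xM$ for all $x \in U$. My plan is to build $\pi$ as the projection of a tubular neighbourhood of $L$ adapted to $\mathcal F$, and then check that transversality, which holds along $L$ itself, propagates to a neighbourhood by openness.

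First I would fix a Riemannian metric on $M$ and let $\exp^\perp\colon \nu \to M$ be the associated normal exponential map, which restricts to a diffeomorphism from a neighbourhood $V$ of the zero section in $\nu = TM|_L / TL$ onto an open set $U_0 \supset L$ in $M$ (here one uses that $L$ is locally closed, so that a genuine tubular neighbourhood exists; if $L$ is merely locally closed and not embedded-closed, one works locally and patches, but the statement is local enough that I will take $U_0$ to be a tubular neighbourhood of an open relatively compact piece — or invoke the standard fact that locally closed submanifolds admit tubular neighbourhoods). Composing the inverse diffeomorphism $U_0 \cong V \subset \nu$ with the bundle projection $\nu \to L$ gives a submersion $\pi_0\colon U_0 \to L$ restricting to the identity on $L$.

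The key point is transversality. Along a point $p \in L$ we have $\ker(T_p\pi_0) = \nu_p$ (identified with the normal space), while $T_p\mathcal F = T_pL$ since $L$ is a leaf (Lemma~\ref{lem:splitting} and the consequences listed after it). Hence $T_p\mathcal F + \ker(T_p\pi_0) = T_pL \oplus \nu_p = T_pM$, so transversality holds at every point of $L$. Now I would argue that the set $\{x \in U_0 : T_x\mathcal F + \ker(T_x\pi_0) = T_xM\}$ is open: locally finite generation of $\mathcal F$ means that near any $p$ there are vector fields $X_1,\dots,X_r$ generating $\mathcal F$, and the condition is that the span of $X_1(x),\dots,X_r(x)$ together with $\ker(T_x\pi_0)$ is all of $T_xM$; since $\dim \ker(T_x\pi_0)$ is constant and the rank of the $X_i(x)$ modulo $\ker(T_x\pi_0)$ is lower semicontinuous, the full-rank locus is open and contains $p \in L$. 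Therefore there is an open $U$ with $L \subset U \subset U_0$ on which transversality holds, and $\pi := \pi_0|_U$ makes $(U,\pi)$ an $\mathcal F$-neighbourhood.

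The main obstacle I anticipate is purely a matter of care with the hypothesis "locally closed leaf": one must ensure a tubular neighbourhood of $L$ genuinely exists as a global object over $L$ (not just over compact pieces), since $\pi$ is required to map onto all of $L$. This is standard — a locally closed submanifold is closed in an open subset of $M$, and one restricts attention to that open subset, where the normal exponential map provides the tubular neighbourhood — so the real content is the transversality-is-open argument above, which is elementary given local finite generation. I would simply cite Proposition~2.21 of \cite{LGR} for the full statement, as indicated, and include the sketch above only if a self-contained argument is wanted.
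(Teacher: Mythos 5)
Your argument is correct, but note that the paper itself does not prove this lemma at all: it simply cites Proposition 2.21 of \cite{LGR}, so there is no internal proof to compare against. Your self-contained sketch is the standard argument and it works: a locally closed leaf is a closed embedded submanifold of some open subset $W\subset M$, so a tubular neighbourhood $\pi_0\colon U_0\to L$ exists inside $W$; along $L$ one has $T_p\mathcal F=T_pL$, whence transversality to $\ker T_p\pi_0=\nu_p$; and the transversality locus is open because, in a chart where $\mathcal F$ is generated by finitely many vector fields $X_1,\dots,X_r$, the condition is that these generators have full rank in the quotient bundle $TU_0/\ker T\pi_0\cong\pi_0^*TL$, a lower semicontinuous rank condition. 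Intersecting this open locus with $U_0$ gives the desired $(U,\pi)$, and since $L\subset U$ and $\pi|_L=\mathrm{id}$, the projection is still onto $L$. One small caution: your parenthetical fallback to ``a tubular neighbourhood of an open relatively compact piece'' should be discarded, since $\pi$ must be defined over all of $L$; the correct fix is the one you state afterwards, namely working inside the open set in which $L$ is closed. With that hedge removed, your proof is complete and is presumably the content of the cited Proposition 2.21 of \cite{LGR}.
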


As we are only interested in the behaviour of $\mathcal F$ near $L$, for the rest of the section we assume that $M=U$ is an $\mathcal F $-neighborhood equipped with some projection $\pi:M\to L$. The $C^\infty(L)$-modules of \emph{$\pi$-vertical} (resp. \emph{ $\pi$-projectable}) vector fields will be denoted by $\mathfrak X^v\subset \mathfrak X^{proj}\subset\mathfrak X(U)$. We also write $\mathcal F^v$ and $\mathcal F^{proj}$ for the $\pi$-vertical and $\pi$-projectable vector fields in $\mathcal F$.

\begin{lemma}
  The Lie algebra $\mathcal F^{proj}$ of $\pi$-projectable vector fields in $\mathcal F$ form a Lie-Rinehart algebra over $C^\infty(L)$. Moreover, there is a sequence of surjective Lie-Rinehart algebra morphisms:
\begin{align}\label{xypic:surjections}
\xymatrix{
\mathcal F^{proj}\ar[rr]&& \Gamma( A_L)\ar[rr]&&  \Gamma(A_L^{lin}) \ar[rr] && \Gamma(A_L^{s} )\ar[rr]&& \mathfrak X(L)
},
\end{align} 
\end{lemma}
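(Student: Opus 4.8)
The statement packages together three kinds of claims: (a) that $\mathcal F^{proj}$ is closed under the Lie bracket and is a $C^\infty(L)$-module via $\pi^*$, hence a Lie--Rinehart algebra with anchor the map $X\mapsto \pi_*X$ into $\mathfrak X(L)$; (b) that the maps in \eqref{xypic:surjections} are morphisms of Lie--Rinehart algebras; and (c) that all four maps are surjective. I would treat (a) and (b) quickly and spend the real effort on the surjectivity of the first arrow $\mathcal F^{proj}\to\Gamma(A_L)$, since the three remaining arrows are the restrictions to sections of the Lie algebroid morphisms $A_L\to A_L^{lin}\to A_L^s\to TL$ already produced above (Lemma \ref{lem:filtration}, Proposition \ref{prop:linTos}, and the anchor of $A_L^s$), and those are surjective as bundle maps, hence surjective on sections by the usual partition-of-unity argument over $L$ (all bundles are over the leaf $L$, which is a manifold, so this is standard and I would only remark on it).

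For (a): $\pi$-projectability is obviously preserved by $C^\infty(L)$-multiplication through $\pi^*$ (one checks $\pi_*(f\circ\pi\cdot X)=f\cdot\pi_*X$) and by the Lie bracket of two projectable fields, and $\mathcal F$ is a Lie-bracket-closed $C^\infty(M)$-module, so the intersection $\mathcal F^{proj}=\mathcal F\cap\mathfrak X^{proj}$ inherits both structures; the Leibniz rule for the anchor $X\mapsto\pi_*X$ is immediate. For (b): the map $\mathcal F^{proj}\to\Gamma(A_L)=\mathcal F/I_L\mathcal F$ is simply the restriction of the quotient map $\mathcal F\to\mathcal F/I_L\mathcal F$, so it is a Lie-algebra morphism; it intertwines the $C^\infty(L)$-actions because $I_L=\ker(C^\infty(M)\to C^\infty(L))$ and the module structure on $A_L$ is the induced one; and it intertwines the anchors because the anchor of $A_L$ is induced by $X\mapsto X|_L$, while for a $\pi$-projectable $X$ the restriction $X|_L$ is exactly $\pi_*X$ up to the identification $TL\subset TM|_L$ (here one uses that $\pi|_L=\mathrm{id}_L$, which is part of the definition of an $\mathcal F$-neighbourhood). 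Composing with the already-established algebroid morphisms $A_L\to A_L^{lin}\to A_L^s\to TL$ gives the remaining three arrows as Lie--Rinehart morphisms automatically.

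The one genuinely substantive point is surjectivity of $\mathcal F^{proj}\to\Gamma(A_L)$: given a class in $\mathcal F/I_L\mathcal F$ represented by some $X\in\mathcal F$, I must produce a \emph{$\pi$-projectable} representative $X'\in\mathcal F$ with $X'-X\in I_L\mathcal F$. This is where the $\mathcal F$-neighbourhood hypothesis $T_x\mathcal F+\ker(T_x\pi)=T_xM$ is used. The natural strategy: cover $L$ by finitely many (or locally finitely many) opens over which $\mathcal F$ has generators and $\pi$ looks like a projection in adapted coordinates, correct $X$ there by subtracting a suitable element of $\mathfrak X^v\cap\mathcal F$ so that the difference becomes projectable along the fibre directions, and then glue with a partition of unity on $L$ pulled back by $\pi$; one must check that the corrections can be arranged to differ from $X$ only by something in $I_L\mathcal F$, which is where one invokes that corrections live in the vertical foliation and the combination of splitting (Lemma \ref{lem:splitting}) with the transversality condition lets one absorb the fibrewise part. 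Alternatively — and this is probably cleaner — I would cite the corresponding statement already proved in \cite{LGR}: the map $\mathcal F^{proj}\to\Gamma(A_L)$ appearing there (it is effectively Proposition/Lemma on projectable vector fields in that paper's treatment of $\mathcal F$-neighbourhoods) is surjective, and then the present lemma follows by composition. \textbf{The main obstacle} is thus precisely this surjectivity: making the correction of $X$ to a projectable field globally along $L$ while staying inside $\mathcal F$ and only changing $X$ modulo $I_L\mathcal F$; everything else is formal bookkeeping with quotients of Lie--Rinehart algebras and bundle maps over $L$.
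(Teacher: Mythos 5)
Your reading of the statement is accurate, and in fact the paper states this lemma with no proof at all: the Lie--Rinehart structure on $\mathcal F^{proj}$ and the fact that the last three arrows are surjective Lie--Rinehart morphisms are exactly the bookkeeping you describe (quotient maps and the surjective transitive-algebroid morphisms $A_L\to A_L^{lin}\to A_L^s\to TL$ from Lemma \ref{lem:filtration} and Proposition \ref{prop:linTos}, which are surjective on sections). The substantive point you isolate, surjectivity of $\mathcal F^{proj}\to\Gamma(A_L)$, is addressed by the paper only afterwards, in Lemma \ref{lem:existlift}, where the stronger statement (a $C^\infty(L)$-linear section) is obtained by precisely the strategy you sketch: local projectable representatives coming from the splitting Lemma \ref{lem:splitting}, glued by a partition of unity $(\chi_i)$ on $L$ pulled back through $\pi$ (multiplication by $\chi_i\circ\pi$ preserves projectability and acts on $\Gamma(A_L)$ through its $C^\infty(L)$-module structure, so the glued field still represents the given class). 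The one point your gluing step glosses over is that the local models furnished by the splitting lemma are defined on opens $V_i\subset\pi^{-1}(U_i)$ of $M$ which need not cover $\pi^{-1}(U_i)$, so the glued projectable representative is a priori defined only on the smaller neighbourhood $\bigcup_{x\in L}\bigcap_{i:\,x\in U_i}V_i$ of $L$; this is exactly why the paper's Lemma \ref{lem:existlift} and Proposition \ref{prop:exist_connexion} carry the proviso ``possibly on a sub-$\mathcal F$-neighbourhood''. You should either add the same proviso or state explicitly that the lemma is understood up to shrinking $(U,\pi)$; with that caveat, your argument (or the alternative of citing the corresponding statement in \cite{LGR}) is sound.
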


We introduce several types of connection adapted to this context:

\begin{definition}[\cite{AndroulidakisPrivate,LGR}]\label{def:connection}
Let $L$ be a leaf of a foliation $ \mathcal F$ on $(M=U,\pi)$.
\begin{itemize}
    \item An \emph{Ehresmann $\mathcal F$-connection} is a $C^\infty(L)$-linear section $\mathfrak{X}(L)\to \mathcal F^{proj}$ of the surjection $\mathcal F^{proj}\to\mathfrak X(L)$ in \eqref{xypic:surjections}.
    \item A \emph{Levi $\mathcal F$-connection} is a $C^\infty(L)$-linear section $s:\Gamma(A_L^s)\to \mathcal F^{proj}$  of the surjection $\mathcal F^{proj}\to \Gamma(A_L^s)$ in \eqref{xypic:surjections}.
\end{itemize}
\end{definition}

Existence of Ehresmann $\mathcal F$-connection was already established in \cite{AndroulidakisPrivate,LGR}. We now extend this result:

\begin{proposition}\label{prop:exist_connexion} 
Let $L$ be a leaf of a foliation $ \mathcal F$ on $(M=U,\pi)$. Then, possibly on a sub-$\mathcal F$-neighbourhood of $L$, Levi $ \mathcal F$-connections and Ehresmann $ \mathcal F$-connections exist.
\end{proposition}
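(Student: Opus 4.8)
The plan is to construct the two connections by a partition-of-unity argument, reducing the global problem to a local one where $A_L^s$ (resp.\ $TL$) is trivial and the relevant surjection admits an obvious set-theoretic splitting; the only subtlety, compared to the classical Ehresmann case, is that a section of $\Gamma(A_L^s) \to \mathfrak X(L)$ need not lift \emph{canonically} to $\mathcal F^{proj}$, and one must check that the lift can be arranged to take values in $\mathcal F^{proj}$ rather than merely in $\mathfrak X^{proj}$. I would first treat the Levi $\mathcal F$-connection, since the Ehresmann one then follows by composing with the section $\mathfrak X(L) \to \Gamma(A_L^s)$ — wait, actually the surjections go $\mathcal F^{proj} \to \Gamma(A_L^s) \to \mathfrak X(L)$, so an Ehresmann connection is recovered from a Levi connection $s$ together with \emph{any} $C^\infty(L)$-linear splitting $\sigma$ of the transitive Lie algebroid morphism $A_L^s \to TL$ (which exists, being a vector bundle surjection), by setting the Ehresmann connection equal to $s \circ \sigma$. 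So it suffices to produce a Levi $\mathcal F$-connection.

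First I would observe that $\mathcal F^{proj} \to \Gamma(A_L^s)$ is a surjection of $C^\infty(L)$-modules, and both sides are sections of vector bundles over the (possibly non-compact, but paracompact) manifold $L$; the source is the module of sections of no finite-rank bundle, but the relevant point is only that the surjection is locally split. Concretely, cover $L$ by open sets $V_\alpha$ over which $A_L^s|_{V_\alpha}$ is trivial and over which one can choose finitely many $\pi$-projectable vector fields in $\mathcal F$ whose images span $A_L^s|_{V_\alpha}$ fibrewise; picking a frame $(e_i^\alpha)$ of $A_L^s|_{V_\alpha}$ and projectable lifts $X_i^\alpha \in \mathcal F^{proj}$ of the $e_i^\alpha$, one gets a local $C^\infty(V_\alpha)$-linear section $s_\alpha \colon \Gamma(A_L^s|_{V_\alpha}) \to \mathcal F^{proj}|_{V_\alpha}$, $\sum f_i e_i^\alpha \mapsto \sum f_i X_i^\alpha$. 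Then choose a partition of unity $(\chi_\alpha)$ subordinate to $(V_\alpha)$ and set $s := \sum_\alpha \chi_\alpha s_\alpha$. Each term $\chi_\alpha s_\alpha$ extends by zero to a globally defined $C^\infty(L)$-linear map $\Gamma(A_L^s) \to \mathcal F^{proj}$ (using that $\mathcal F$ is a $C^\infty(M)$-module and $\chi_\alpha \in C^\infty(L) \subset C^\infty(M)$ via $\pi^*$), the sum is locally finite, it is $C^\infty(L)$-linear, and it is a section of the projection because projecting commutes with multiplication by $\chi_\alpha$ and $\sum_\alpha \chi_\alpha = 1$.

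The step I expect to be the main obstacle — and the reason the statement hedges with ``possibly on a sub-$\mathcal F$-neighbourhood'' — is the \emph{local} surjectivity used above: I need, near each point of $L$, finitely many genuinely $\pi$-projectable vector fields \emph{inside $\mathcal F$} whose classes generate $A_L^s$. That $\mathcal F$ has local generators is automatic by local finite generation; that they can be chosen $\pi$-projectable, after possibly shrinking the $\mathcal F$-neighbourhood, is exactly the content of the $\mathcal F$-neighbourhood machinery of \cite{LGR} (the condition $T_x\mathcal F + \ker T_x\pi = T_xM$ is what lets one correct generators by vertical vector fields in $\mathcal F$ to make them projectable, as in the construction of Ehresmann $\mathcal F$-connections already cited from \cite{AndroulidakisPrivate,LGR}); and the passage from $\Gamma(A_L) \twoheadrightarrow \Gamma(A_L^{lin}) \twoheadrightarrow \Gamma(A_L^s)$ is surjective on sections because these are vector bundle surjections over $L$. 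So I would either cite the projectability statement from \cite{LGR} directly, applying it to a local generating set, or spell out the one-line correction: given local generators $Y_j$ of $\mathcal F$, write $Y_j = Y_j^{proj} + Y_j^v$ with $Y_j^v$ vertical; the $\mathcal F$-neighbourhood condition together with involutivity lets one arrange $Y_j^v \in \mathcal F$, hence $Y_j^{proj}\in \mathcal F^{proj}$, and these still generate modulo verticals, so their classes in $A_L^s$ (which only see the transverse/projectable behaviour) still span. Assembling the local sections via the partition of unity as above then finishes both existence statements.
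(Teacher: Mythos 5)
Your argument is correct and essentially the paper's own: local $C^\infty(L)$-linear lifts glued by a partition of unity, with both connection types then obtained by composing with a vector-bundle-level section (the paper first builds a section of $\mathcal F^{proj}\to\Gamma(A_L)$ via the splitting Lemma~\ref{lem:splitting} in its Lemma~\ref{lem:existlift} and composes with linear sections $TL\to A_L$ and $A_L^s\to A_L$, whereas you lift to $\Gamma(A_L^s)$ directly and recover the Ehresmann connection from a section $TL\to A_L^s$ --- an immaterial difference). One caution about your optional ``one-line correction'': a general vector field does \emph{not} decompose as projectable plus $\pi$-vertical, so that shortcut fails as stated, and the local existence of projectable representatives of classes in $A_L$ (hence in $A_L^s$) should instead be obtained from your other option, namely the splitting Lemma~\ref{lem:splitting} (as the paper does) or the corresponding statement of \cite{LGR}.
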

The proposition follows from the following lemma:
\begin{lemma}\label{lem:existlift}
Let $L$ be a leaf of a foliation $ \mathcal F$ on $(M=U,\pi)$. Then, possibly on a sub-$\mathcal F$-neighbourhood of $L$ the surjection $\mathcal F^{proj}\to \Gamma(A_L)$ admits a $C^\infty(L)$-linear section.
\end{lemma}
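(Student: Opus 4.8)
The plan is to construct the section to $\mathcal F^{proj}\to\Gamma(A_L)$ by hand over a sufficiently small $\mathcal F$-neighbourhood, exploiting local triviality of the transitive Lie algebroid $A_L$ together with a partition of unity on $L$, and then gluing. First I would choose a finite open cover $(V_\alpha)$ of $L$ by sets over which $A_L$ is trivial as a transitive Lie algebroid; shrinking $U$ if necessary we may assume $\pi^{-1}(V_\alpha)$ is a product-type $\mathcal F$-neighbourhood in which $\mathcal F$ has a finite set of generators. Over each $V_\alpha$, lifting a local frame of $\Gamma(A_L)$ to honest vector fields in $\mathcal F$ is immediate from the very definition $\Gamma(A_L)=\mathcal F/I_L\mathcal F$: pick representatives in $\mathcal F$, and then — this is the one genuine subtlety — replace each representative by a $\pi$-projectable one. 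For that I would use the defining property of an $\mathcal F$-neighbourhood, $T_x\mathcal F+\ker(T_x\pi)=T_xM$, which lets one correct any $X\in\mathcal F$ by a $\pi$-vertical element of $\mathcal F$ so that the result is $\pi$-projectable; one must check the correction can be taken inside $\mathcal F$ (it can, since $\mathcal F^v$ surjects onto the relevant vertical part by the splitting), and that it does not change the class in $\Gamma(A_L)$ (it lies in $\mathcal F$ and vanishes along $L$ up to $I_L\mathcal F$, so one argues as in the construction of Ehresmann $\mathcal F$-connections in \cite{AndroulidakisPrivate,LGR}). This produces, over each $V_\alpha$, a $C^\infty(V_\alpha)$-linear section $s_\alpha\colon\Gamma(A_L|_{V_\alpha})\to\mathcal F^{proj}|_{\pi^{-1}(V_\alpha)}$.

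Next I would globalise by a partition-of-unity argument. Let $(\chi_\alpha)$ be a partition of unity on $L$ subordinate to $(V_\alpha)$, and pull it back to functions $\pi^*\chi_\alpha$ on $U$, which are $\pi$-projectable and multiply $\mathcal F^{proj}$ into itself. For $a\in\Gamma(A_L)$ set $s(a):=\sum_\alpha (\pi^*\chi_\alpha)\, s_\alpha(\chi_\alpha^{1/2}\cdot\ ?\ )$ — more carefully, one wants $s(a)=\sum_\alpha \pi^*\chi_\alpha\cdot s_\alpha(a|_{V_\alpha})$, where $s_\alpha(a|_{V_\alpha})$ is a $\pi$-projectable vector field defined on $\pi^{-1}(V_\alpha)$ and $\pi^*\chi_\alpha$ extends the product by zero to all of $U$. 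Each summand lies in $\mathcal F^{proj}$ because $\mathcal F^{proj}$ is a $C^\infty(L)$-module, the sum is finite, and $C^\infty(L)$-linearity in $a$ is clear. That $s$ is a section follows because the surjection $\mathcal F^{proj}\to\Gamma(A_L)$ is $C^\infty(L)$-linear and $\sum_\alpha\pi^*\chi_\alpha=1$: the image of $s(a)$ in $\Gamma(A_L)$ is $\sum_\alpha\chi_\alpha\, a|_{V_\alpha}=a$.

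The main obstacle is not the gluing — which is routine once local sections exist — but the step of producing $\pi$-projectable representatives while staying inside $\mathcal F$ and without disturbing the $A_L$-class. Concretely one must show: given $X\in\mathcal F$ defined near $\pi^{-1}(V_\alpha)$, there is $Y\in\mathcal F^v$ with $X-Y\in\mathcal F^{proj}$. The $\mathcal F$-neighbourhood condition gives, pointwise, a decomposition of $X$ into a $\pi$-projectable part plus a vertical part with values in $T\mathcal F$; the content is to realise the vertical correction as an actual element of $\mathcal F$ (not merely a section of the distribution). Here I would invoke the local splitting Lemma \ref{lem:splitting}, or equivalently the construction already carried out for Ehresmann $\mathcal F$-connections in \cite{LGR}, applied fibrewise over $V_\alpha$: in the product model $\mathcal F\cong T(\text{ball})\times\mathcal T$ the vertical vector fields in $\mathcal F$ are exactly $\mathcal T$ plus vertical directions in the ball factor, and the projectability correction is explicit. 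Once this is in place, shrinking $U$ (which is why the statement says ``possibly on a sub-$\mathcal F$-neighbourhood'') absorbs the finitely many local choices, and Proposition \ref{prop:exist_connexion} follows by composing $s$ with the further surjections $\Gamma(A_L)\to\Gamma(A_L^{lin})\to\Gamma(A_L^s)\to\mathfrak X(L)$ and restricting to the relevant sub-bundles.
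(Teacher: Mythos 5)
Your overall strategy (local sections built in a splitting chart, then glued with a partition of unity on $L$) is the same as the paper's, but the step you yourself single out as the crux is false as stated. It is not true that for every $X\in\mathcal F$ there exists $Y\in\mathcal F^v$ with $X-Y\in\mathcal F^{proj}$. In a splitting chart of Lemma \ref{lem:splitting}, with leaf coordinates $y$, transverse coordinates $x$ and $\pi(y,x)=y$, take $X=x_1\tfrac{\partial}{\partial y_1}\in I_L\mathcal F\subset\mathcal F$: it is $\pi$-horizontal, so subtracting any $\pi$-vertical field leaves the horizontal part $x_1\tfrac{\partial}{\partial y_1}$ untouched, and that is not projectable. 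Moreover, even where a vertical correction is available, subtracting $Y\in\mathcal F^v$ generally changes the class in $\Gamma(A_L)=\mathcal F/I_L\mathcal F$: vertical elements of $\mathcal F$ do vanish along $L$, but they represent arbitrary sections of $\mathfrak g_L=\ker\rho$ rather than elements of $I_L\mathcal F$, so ``vanishes along $L$'' does not justify preservation of the class, contrary to your parenthetical remark.

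The correct local statement --- and what the paper's proof actually uses --- is that over a splitting chart every class in $\Gamma(A_L)$ admits a projectable representative, obtained by correcting a given representative by an element of $I_L\mathcal F$ (which is neither vertical nor required to be): writing $X=\sum_i f_i(y,x)\tfrac{\partial}{\partial y_i}+\sum_j g_j(y,x)Z_j$ with $Z_j$ generators of the transverse foliation, replace the coefficients by $f_i(y,0)$ and $g_j(y,0)$; the difference lies in $I_L\mathcal F$, and the new field is in $\mathcal F$, projectable, and represents the same class. This is exactly what the paper packages as the identifications $A_L|_U\simeq TL|_U\oplus\mathfrak g_p$ and $\mathcal F^{proj}|_V\simeq\Gamma(TL)\oplus\mathcal F^v$, combined with a linear section $\mathfrak g_p\to\mathcal F^v_p$ coming from the point-leaf case. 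With that repair your gluing goes through, with one more caveat: you should not assume a finite cover, since $L$ need not be compact; as in the paper, take a locally finite cover with partition of unity $(\chi_i)$ and define the sub-$\mathcal F$-neighbourhood as $V=\bigcup_{x\in L}\bigl(\bigcap_{i\,:\,x\in U_i}V_i\bigr)$, because a single global shrinking of $U$ making all charts product-type need not exist.
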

\begin{proof}
When $ L$ is a point, $A_L $ is a Lie algebra, and such a section $s $ exists. In particular, there exists for all $p \in L $ a linear section $s_p : {\mathfrak g}_p \to \mathcal F_p^v $. In view of the splitting Lemma \ref{lem:splitting}, every point $p$ admits a neighborhood $U$ in $L$ which admits an neighborhood $V$ on $M$ such that 
$$ A_L |_U  \simeq TL |_U  \oplus  {\mathfrak g}_p \hbox{ and } \mathcal F^{proj}|_V \simeq \Gamma(TL ) \oplus \mathcal F^v.$$
Under this identification, $ ({\mathrm id} \times s_p)$ is a section on $\mathcal F^{proj}|_V \to \Gamma(A_L )|_U $.

Let $ (U_i)_{i \in I}  $ be an open cover of $L$, such that each $ U_i$ comes equipped with a $C^\infty(U_i)$-linear section  $ s_i$ of $ p \colon \mathcal F^{proj}|_{V_i} \to \Gamma(A_L)|_{U_i} $ for some open subset $V_i \subset p^{-1}(U_i)$. Without any loss of generality, one can assume that the open cover  $ (U_i)_{i \in I}  $  is locally finite and comes with a partition of unit $(\chi_i)_{i \in I} $. Then:
 $$ V := \bigcup_{x \in L} \left(\bigcap_{i \in I \hbox{ s.t. } x \in U_i}  V_i \right) $$
 is an open neighborhood of $L$, and $ s := \sum_{i \in I} \chi_i s_i $ is a well-defined  $C^\infty(L)$-linear section  of $ p \colon \mathcal F^{proj}|_{V} \to \Gamma(A_L) $
\end{proof}
\begin{proof} (of Proposition \ref{prop:exist_connexion})
The composition of any linear section $TL\to A_L$ (resp.  $A_L^s\to A_L$) with a section as in Lemma \ref{lem:existlift} yields an Ehresmann $\mathcal F$-connection (resp. a Levi $\mathcal F$-connection). This proves the statement.
\end{proof}

\begin{definition}\label{def:flat}
An Ehresmann/Levi $\mathcal F$-connection is called \emph{flat} if it is bracket-preserving (i.e. a morphism of Lie-Rinehart algebras). 
\end{definition}

Let us give the geometric interpretation of the existence of flat Ehremann/Levi connections:
\begin{proposition} Let $L$ be a leaf of a singular foliation $\mathcal F $.
\begin{itemize}
    \item A flat Ehresmann $\mathcal F$-connection exists if and only if near $L$ there exists a regular foliation included into $\mathcal F $ admitting $L$ as a leaf. 
 \item
A flat Levi $\mathcal F$-connection exists if and only if there exists a Lie algebroid action of $ A_L^s$ on $\pi\colon M \to L$ made of vector fields in $ \mathcal F$. 

\end{itemize}
\end{proposition}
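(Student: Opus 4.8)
The plan is to prove the two equivalences separately, in each case by unwinding what the connection data \emph{is} and matching it to the geometric object. I would begin with the Ehresmann case, which is the prototype. A flat Ehresmann $\mathcal F$-connection is, by Definitions \ref{def:connection} and \ref{def:flat}, a $C^\infty(L)$-linear Lie-Rinehart morphism $\sigma\colon \mathfrak X(L)\to\mathcal F^{proj}$ splitting the surjection $\mathcal F^{proj}\to\mathfrak X(L)$. Given such a $\sigma$, I would set $\mathcal G:=C^\infty(M)\cdot\sigma(\mathfrak X(L))\subset\mathcal F$, the $C^\infty(M)$-submodule generated by the image; the point is that $\sigma$ being bracket-preserving and $C^\infty(L)$-linear forces $\mathcal G$ to be involutive (the bracket of two generators is a generator, and the Leibniz rule absorbs the function coefficients into lower-order terms that are again in $\mathcal G$), and the condition $T_x\mathcal F+\ker(T_x\pi)=T_xM$ together with the splitting property shows $\mathcal G$ has constant rank equal to $\dim L$ and is everywhere transverse to the fibres, hence a regular foliation with $L$ as a leaf. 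Conversely, given a regular foliation $\mathcal R\subset\mathcal F$ through $L$, the restriction of $\pi$ to each leaf of $\mathcal R$ near $L$ is a local diffeomorphism onto $L$ (by transversality to the fibres), so horizontal lifting of vector fields on $L$ into $\mathcal R$ defines a $C^\infty(L)$-linear section $\mathfrak X(L)\to\mathcal F^{proj}$, and it is automatically flat because horizontal lifts of vector fields tangent to the leaves of an involutive distribution bracket-close.

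For the Levi case the structure is identical but with $A_L^s$ in place of $TL$. A flat Levi $\mathcal F$-connection is a $C^\infty(L)$-linear Lie-Rinehart morphism $s\colon\Gamma(A_L^s)\to\mathcal F^{proj}$ splitting $\mathcal F^{proj}\to\Gamma(A_L^s)$. I would observe that such an $s$ is precisely the same data as a Lie algebroid action of $A_L^s$ on $\pi\colon M\to L$ whose infinitesimal generators all lie in $\mathcal F$: recall (Mackenzie) that a Lie algebroid action of a transitive Lie algebroid $B\to L$ on a fibration $\pi\colon M\to L$ is a $C^\infty(L)$-linear Lie-Rinehart morphism $\Gamma(B)\to\mathfrak X^{proj}(M)$ lifting the anchor $\Gamma(B)\to\mathfrak X(L)$. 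Taking $B=A_L^s$, a flat Levi $\mathcal F$-connection $s$ is exactly such a morphism with image in $\mathcal F^{proj}$ and compatible with the anchor — the anchor-compatibility being built into the word ``section'' in Definition \ref{def:connection}, since the composite $\mathcal F^{proj}\to\Gamma(A_L^s)\to\mathfrak X(L)$ in \eqref{xypic:surjections} is the anchor of $A_L^s$. Conversely any such action, being by construction valued in $\mathcal F^{proj}$ and splitting the projection to $\Gamma(A_L^s)$, is a flat Levi $\mathcal F$-connection. So this half is essentially a translation between two pieces of terminology, once the exact sequence \eqref{xypic:surjections} is in hand.

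I expect the main obstacle to be the first equivalence, specifically the forward direction: showing that the $C^\infty(M)$-module generated by the image of a flat Ehresmann connection is genuinely a regular foliation and not merely an involutive module of generically-rank-$\dim L$ vector fields. One has to verify constant rank everywhere near $L$ — this is where the $\mathcal F$-neighbourhood condition $T_x\mathcal F+\ker T_x\pi=T_xM$ is used, shrinking $U$ if necessary so that the $\dim L$ projectable generators stay pointwise linearly independent and transverse to the fibres — and one has to check that the bracket of $f\sigma(X)$ with $g\sigma(Y)$ lands back in the module, which uses both flatness of $\sigma$ and $C^\infty(L)$-linearity to control the derivative terms $(\sigma(X)g)\,\sigma(Y)$. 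I would also need to be mildly careful in the Levi case that ``the infinitesimal generators lie in $\mathcal F$'' is interpreted as the image of $s$ lying in $\mathcal F^{proj}\subset\mathcal F$, which is immediate from the codomain of $s$; the content is entirely in identifying Lie algebroid actions of $A_L^s$ with such lifts, which I would cite from Mackenzie rather than reprove.
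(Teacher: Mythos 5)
Your proposal is correct, and it is exactly the definition-unwinding argument the paper has in mind: the authors state this proposition as a ``geometric interpretation'' and omit the proof, the content being precisely your Frobenius/horizontal-lift argument for the Ehresmann case and the identification of flat Levi connections with $A_L^s$-actions by vector fields in $\mathcal F^{proj}$ inducing the canonical projection. The only points worth polishing are cosmetic: the constant rank of the distribution spanned by $\sigma(\mathfrak X(L))$ follows already from $C^\infty(L)$-linearity together with $\pi$-projectability onto a frame of $TL$ (rather than from the $\mathcal F$-neighbourhood condition), and in the Levi converse one should read ``action made of vector fields in $\mathcal F$'' as one whose composite with $\mathcal F^{proj}\to\Gamma(A_L^s)$ is the identity, which is the intended meaning of the statement.
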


\begin{remark}
Let $L$ be a locally closed leaf. 
 A flat Ehresmann connection induces a flat section of the anchor map $A_L^s\to TL$.
Also, if $L$ is Ehresmann-flat, then its normal bundle $ \nu$ is a flat bundle. This gives clear obstructions to the existence of flat Ehresmann-connections. In contrast, flat Levi $\mathcal F$-connection can be assured to exist under relatively mild topological conditions, as we will see later.
\end{remark}

\subsubsection{Linear Ehresmann or Levi $\mathcal F$-connections}

An additional desirable property for a Ehresmann or Levi $\mathcal{F}$-connection is (transverse) linearity. For this purpose, we need to notion of fiberwise linearity. This is completed through the following definition adapted from \cite{zbMATH07105909}:

\begin{definition}
Consider a $ \mathcal F$-neighborhood  $ (U,\pi)$ of a locally closed leaf $L$. A vector field $\mathcal E \in U$ which is:
\emph{(i)} tangent to the fibers of $\pi \colon U \to L$,
\emph{(ii)}  vanishes along $L$,
\emph{(iii)}  whose linearization is the Euler vector field on the normal bundle $ \nu$, and \emph{(iv)} that is complete
 is said to be an Euler-like vector field on  $ (U,\pi)$. 
\end{definition} 
Upon rescaling the vector field and shrinking the tubular neighbourhood $(U,\pi) $ the completeness condition \emph{(iv)} can always be assumed for  vector field satisfying \emph{(i)}--\emph{(iii)}. The following lemma is the adaptation of \cite{zbMATH07105909} to the case where $ \pi$ is given and $\mathcal E $ is assumed to be tangent to it.

\begin{lemma}\label{lem:Meirenken}
 Euler-like vector fields on an $\mathcal F $-neighborhood $(U,\pi) $ are in one-to-one correspondence with vector bundle structures on the fiber bundle $\pi:U\to L$.
\end{lemma}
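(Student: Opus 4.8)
The plan is to establish the bijection in both directions and check that they are mutually inverse. The non-trivial input is the classical Euler-like picture from \cite{zbMATH07105909}: on any manifold $U$ with a closed submanifold $L$, Euler-like vector fields on $U$ (vanishing on $L$, with linearization the Euler field of $\nu = TU|_L/TL$, complete) are in bijection with tubular neighbourhood embeddings $\nu \hookrightarrow U$, equivalently with vector bundle structures on a fibration $U \to L$ having $L$ as zero section. What is new here is only the presence of the fixed projection $\pi$: we must see that the correspondence restricts to the sub-class of Euler-like fields \emph{tangent to the fibres of $\pi$} on one side, and to vector bundle structures \emph{on the given fibre bundle $\pi \colon U \to L$} (i.e.\ compatible with $\pi$ in the sense that the bundle projection equals $\pi$) on the other.

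First I would recall the construction from \cite{zbMATH07105909} in one direction: given an Euler-like $\mathcal E$, its flow $\Phi_t$ (complete by hypothesis) contracts $U$ onto $L$ as $t \to -\infty$, and the map $\nu \to U$ obtained by flowing out along $\mathcal E$ from the linear Euler field is a diffeomorphism onto $U$ (after shrinking), intertwining the scalar multiplications; transporting the linear structure of $\nu$ gives a vector bundle structure on $U \to L$ whose zero section is $L$ and whose Euler vector field is exactly $\mathcal E$. Conversely, a vector bundle structure on $\pi \colon U \to L$ has a canonical Euler (radial) vector field $\mathcal E$, which visibly satisfies (ii)--(iv) and whose linearization is the Euler field on $\nu$, so (i)--(iv) hold once we know $\mathcal E$ is $\pi$-vertical — which is automatic because the radial vector field of a vector bundle is tangent to the fibres of the bundle projection. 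That the two assignments are inverse is the content of \cite{zbMATH07105909} and requires no change.

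So the only point genuinely requiring argument is the matching of the two sub-classes. For the easy inclusion: a vector bundle structure on the \emph{fibre bundle} $\pi \colon U \to L$ — meaning its bundle projection is $\pi$ itself — has radial vector field tangent to the fibres of $\pi$, so it produces an Euler-like vector field that is $\pi$-vertical. For the converse, start with $\mathcal E$ Euler-like and $\pi$-vertical; the construction yields a vector bundle structure $p \colon U \to L$ with some projection $p$, a priori unrelated to $\pi$. The key claim is $p = \pi$. I would argue this by noting that $p$ is the limit of the flow: $p(x) = \lim_{t \to -\infty} \Phi_t(x)$, where $\Phi_t$ is the flow of $\mathcal E$; since $\mathcal E$ is tangent to the fibres of $\pi$, the flow $\Phi_t$ preserves each fibre $\pi^{-1}(\ell)$, hence the limit point $p(x)$ lies in the same fibre $\pi^{-1}(\pi(x))$, i.e.\ $\pi(p(x)) = \pi(x)$; but $p(x) \in L$ and $\pi|_L = \mathrm{id}_L$, so $p(x) = \pi(x)$. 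This identifies the bundle projection with $\pi$ and closes the loop.

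The main obstacle, such as it is, is bookkeeping rather than mathematics: one must be careful that ``vector bundle structure on $\pi \colon U \to L$'' is interpreted as a structure whose projection is literally $\pi$ (not merely isomorphic to $\pi$), and that the shrinking of $U$ implicit in \cite{zbMATH07105909} is done within the class of $\mathcal F$-neighbourhoods — but since shrinking to a smaller tube around $L$ preserves the transversality condition defining $\mathcal F$-neighbourhoods, and the flow of a $\pi$-vertical field keeps us fibrewise, no difficulty arises. I would therefore present the proof as: quote the diffeomorphism-level correspondence of \cite{zbMATH07105909} verbatim, then add the short paragraph above identifying $p$ with $\pi$ in the presence of $\pi$-verticality, and conversely observing that a genuine vector bundle structure over $\pi$ has $\pi$-vertical radial field.
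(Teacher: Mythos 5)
Your argument is exactly the adaptation the paper has in mind: it states this lemma without proof, pointing to \cite{zbMATH07105909}, and the only new content is precisely your observation that $\pi$-verticality of $\mathcal E$ forces the induced bundle projection $p(x)=\lim_{t\to-\infty}\Phi_t(x)$ to coincide with $\pi$ (since the flow preserves $\pi$-fibres and $\pi|_L=\mathrm{id}$), together with the converse remark that the radial field of a vector bundle structure over $\pi$ is $\pi$-vertical. So the proposal is correct and follows essentially the same route as the paper.
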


\begin{remark}
Every such a vector bundle is isomorphic to the normal bundle $\nu =\tfrac{TM|_L} {TL}$ of $L$ in $M$.
\end{remark}
    
\begin{definition}
Consider a $\mathcal F $-neighborhood $(U,\pi) $ of a locally closed leaf $L$.  We say that a Levi (resp. Ehresmann) $\mathcal F$-connection $ s \colon \Gamma(A_L^s)  \hookrightarrow \mathcal F^{proj}_U$  (resp.  $   s \colon \Gamma(TL)  \hookrightarrow \mathcal F^{proj}_U  $)
  is \emph{linear} with respect to an Euler-like vector field $\mathcal E$ if there exists a neighborhood of the zero section on which every vector field in the image of $s$ commutes with $\mathcal E$.
\end{definition}

\begin{remark}
Upon identifying $U$ with a vector bundle as in Lemma \ref{lem:Meirenken} vector fields commuting with $\mathcal E $ are simply fiberwise linear vector fields.
\end{remark}

We say that a vector field $X$ is homogeneous of degree $k$ with respect to $ \mathcal E$ is $[\mathcal E,X ] = (k-1) X $. (Linear vector fields are then homogeneous of degree $ 1$). Upon choosing adapted coordinates $(x,y) $ where $\mathcal E = \sum_{i=1}^d x_i \tfrac{\partial}{\partial x_i} $, homogeneous vector fields of degree $k$ are vector fields of the form:
 $$  \sum_{ i} f_i(x,y) \frac{\partial}{\partial x_i} + \sum_{ j} g_j(x,y) \frac{\partial}{\partial y_j}$$
 where $ x \mapsto  f_i(x,y)$ and $ x \mapsto  g_j(x,y)$ are homogeneous polynomials of degree $ k$ and $k-1$ respectively for every value $y$.

\begin{theorem}
\label{theo:EulerExists}
Let $\mathcal F$ be a locally real analytic singular foliation with leaf $L$.  If $\mathcal F$ is preserved by an Euler-like vector field $\mathcal E$ along $L$, then near $L$:
\begin{enumerate}
    \item Any homogeneous component of a vector field in $ \mathcal F$ is in $\mathcal F $.
    \item The foliation $\mathcal F$ is generated by homogeneous vector fields (of degree less or equal than the Artin-Rees bound of the transverse foliation).
    \item There exists a $C^\infty(L)$-linear section $s:\Gamma(A_L^{lin})\to \mathcal F^{proj}$ preserving the Lie bracket.
\end{enumerate}
\end{theorem}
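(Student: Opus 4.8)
\textbf{Proof strategy for Theorem \ref{theo:EulerExists}.}

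The plan is to import Proposition \ref{prop:homogeneous} fiberwise over $L$ using the Euler-like vector field $\mathcal E$ to reduce the neighborhood $U$ to the total space of a vector bundle. First I would invoke Lemma \ref{lem:Meirenken} to identify $(U,\pi)$ with (a neighborhood of the zero section in) the normal bundle $\nu\to L$, so that $\mathcal E$ becomes the fiberwise Euler vector field and $\pi$ is the bundle projection; in adapted local coordinates $(x,y)$ with $\mathcal E=\sum_i x_i\partial_{x_i}$, vector fields in $\mathcal F$ decompose into $\mathcal E$-homogeneous components exactly as described before the theorem. For item (1), the argument of Lemma \ref{lem:adminusk} and the proof of Proposition \ref{prop:homogeneous} go through essentially verbatim, with two modifications: the ideal $I$ is replaced by the ideal $I_L$ of functions vanishing on $L$ (the powers $I_L^k\mathfrak X(U)$ detect the $\mathcal E$-degree in the fiber directions), and the operator $P^k(X)=[\mathcal E,X]-(k-1)X$ together with its integral inverse $Q^k$ are defined fiberwise over $L$, the integration being along the homotheties $\mu^t$ of the fibers. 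The key inputs that $\mathcal F$ is preserved by the flow of $\mathcal E$ (Proposition 1.6 in \cite{AS}) and is closed in the Fréchet topology (Tougeron/Malgrange, as cited in Lemma \ref{lem:adminusk}) remain valid in the family setting because they are local and the analytic generators of $\mathcal F$ restrict to analytic generators near each point of $L$; thus $Q^k$ preserves $\mathcal F\cap I_L^{k+1}\mathfrak X(U)$, and subtracting off the lowest homogeneous component shows it lies in $\mathcal F$. Item (1) then follows by the same finite induction.

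For item (2), I would combine item (1) with the structure of the filtration from Lemma \ref{lem:filtration} and the nilpotence bound from Theorem \ref{thm:arnilpotence}: choose, over a suitable cover of $L$, local trivializing sections $(e_j)$ of each graded piece $A_L^k/A_L^{k+1}$ for $k=0,\dots,c$ (with $c$ the Artin-Rees bound of the transverse foliation), lift them to vector fields $X_j\in\mathcal F^{proj}$, and replace each $X_j$ by its $\mathcal E$-homogeneous component of degree $k$, which still lies in $\mathcal F$ by item (1) and still represents $e_j$ modulo $A_L^{k+1}$ since lower-degree terms cannot contribute to a class that vanishes to order $<k$ and higher-degree terms lie in $I_L^{k+1}\mathfrak X(U)$. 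Because $A_L^{c+1}=0$, finitely many such homogeneous lifts generate $\Gamma(A_L)$ over $C^\infty(L)$, hence — by Proposition 1.5 of \cite{AS} applied locally in $\mathcal F$-neighborhoods — generate $\mathcal F$ near $L$. A partition-of-unity argument (as in Lemma \ref{lem:existlift}) patches the local homogeneous generators into global ones, possibly after shrinking $U$.

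For item (3), the homogeneous generators give a $C^\infty(L)$-linear splitting as follows: for each $k$, the space of $\mathcal E$-homogeneous-degree-$k$ vector fields in $\mathcal F$ is $\pi$-projectable, maps onto $\Gamma(A_L^k/A_L^{k+1})$, and — crucially — has \emph{trivial} intersection with $I_L\mathcal F$ in the relevant degree, so choosing a homogeneous basis as above defines a $C^\infty(L)$-linear section $s:\Gamma(A_L^{lin})\to\mathcal F^{proj}$ (recall $A_L^{lin}=A_L/A_L^2$, i.e. $k\in\{0,1\}$). This section is bracket-preserving because the image consists of $\mathcal E$-homogeneous vector fields of degrees $0$ and $1$, and $[\cdot,\cdot]$ restricted to such fields stays within that space (degree $0$ with degree $0$ gives degree $0$; degree $0$ with degree $1$ gives degree $1$; degree $1$ with degree $1$ gives degree $1$, and there is no degree-$2$ contribution because $A_L^2$-classes do not see it) — so the bracket of two elements of $\mathrm{im}(s)$ agrees with $s$ of the bracket of their classes in $A_L^{lin}$, up to an element of $I_L\mathcal F$ which, being homogeneous of degree $\le 1$, must vanish. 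The main obstacle I anticipate is item (1): verifying carefully that $Q^k$ preserves $\mathcal F$ in the relative (over-$L$) setting, i.e. that the Fréchet-closedness and flow-invariance arguments of Lemma \ref{lem:adminusk} survive when $\mathcal E$ is only fiberwise Euler and $\mathcal F$ merely locally real analytic along $L$; once item (1) is secured, items (2) and (3) are essentially bookkeeping with the filtration.
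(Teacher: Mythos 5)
Your proposal follows essentially the same route as the paper: items (1)--(2) are obtained by running the operators $P^k$, $Q^k$ of Lemma \ref{lem:adminusk} and the argument of Proposition \ref{prop:homogeneous} in a tubular neighbourhood adapted to $\mathcal E$, and item (3) by sending a class in $\Gamma(A_L^{lin})$ to the ($\mathcal E$-)linear component of any preimage in $\mathcal F$, which lies in $\mathcal F$ by item (1) and is bracket-preserving since elements of $\ker\bigl(\mathcal F^{proj}\to\Gamma(A_L^{lin})\bigr)$ have homogeneous decomposition starting in degree $2$. Only a cosmetic caveat: the image of the section consists entirely of degree-$1$ (linear) homogeneous fields, not of fields of degrees $0$ and $1$ --- your ``$k\in\{0,1\}$'' conflates the filtration index on $A_L$ with the $\mathcal E$-homogeneity degree --- but this does not affect the argument.
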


\begin{proof}
Fixing a tubular neighbourhood adapted to $\mathcal E$, the expressions for $P^k$ and $Q^k$ from Lemma \ref{lem:adminusk} still make sense, and satisfy the same relations on  vector fields tangent to $L$. Hence, we can proceed identically to Proposition \ref{prop:homogeneous}. This proves the first two items. A  bracket-preserving isomorphism from  $\Gamma(A_L^{lin})$
to  linear vector fields in $ \mathcal F$ is obtained by mapping  $X \in \Gamma(A_L^{lin})$ to the linear component of any of its inverse image in $\mathcal F $. 
\end{proof}

For foliations as in Theorem \ref{theo:EulerExists}, the existence of a flat section $s:\Gamma(A_L^{s})\to \mathcal F^{proj}$ is therefore equivalent to the existence of a Lie algebroid section $s:A_L^s \to A_L^{lin}$.

\begin{corollary}
Let $\mathcal F$ be a locally real analytic singular foliation with leaf $L$. If $\mathcal F$ is preserved by an Euler-like vector field $\mathcal E$ along $L$, then a flat Levi $\mathcal F $-connection exists if and only if a Lie algebroid section $ A_L^s \to A_L^{lin} $ exists.
\end{corollary}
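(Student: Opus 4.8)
The plan is to deduce the corollary directly from Theorem \ref{theo:EulerExists}, treating it as a near-immediate consequence of the bracket-preserving linear section constructed there. The key observation is that Theorem \ref{theo:EulerExists}(3) provides a Lie-Rinehart isomorphism between $\Gamma(A_L^{lin})$ and the space of linear vector fields in $\mathcal F^{proj}$, which is precisely what converts a purely algebraic Lie algebroid section $A_L^s\to A_L^{lin}$ into a geometric flat Levi $\mathcal F$-connection and vice versa. So the proof amounts to chasing the following square of surjections of Lie-Rinehart algebras, all over $C^\infty(L)$:
\[
\xymatrix{
\mathcal F^{proj}\ar[r]& \Gamma(A_L)\ar[r]& \Gamma(A_L^{lin})\ar[r]& \Gamma(A_L^s).
}
\]
Write $s^{lin}:\Gamma(A_L^{lin})\to\mathcal F^{proj}$ for the bracket-preserving section from Theorem \ref{theo:EulerExists}(3), whose image consists of $\mathcal E$-linear vector fields.

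First I would prove the ``if'' direction. Suppose $\sigma:A_L^s\to A_L^{lin}$ is a Lie algebroid section of the projection $A_L^{lin}\to A_L^s$. Then $\Gamma(\sigma):\Gamma(A_L^s)\to\Gamma(A_L^{lin})$ is a $C^\infty(L)$-linear Lie-Rinehart morphism splitting $\Gamma(A_L^{lin})\to\Gamma(A_L^s)$. Composing, $s:=s^{lin}\circ\Gamma(\sigma):\Gamma(A_L^s)\to\mathcal F^{proj}$ is $C^\infty(L)$-linear and bracket-preserving, being a composite of bracket-preserving maps. It remains to check that $s$ is a section of $\mathcal F^{proj}\to\Gamma(A_L^s)$: this follows because $s^{lin}$ is a section of $\mathcal F^{proj}\to\Gamma(A_L^{lin})$ and $\Gamma(\sigma)$ is a section of $\Gamma(A_L^{lin})\to\Gamma(A_L^s)$, so the composite projection sends $s(e)$ back to $e$. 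Hence $s$ is a flat Levi $\mathcal F$-connection.

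For the ``only if'' direction, start from a flat Levi $\mathcal F$-connection $s:\Gamma(A_L^s)\to\mathcal F^{proj}$. The idea is to post-compose with the projection $\mathcal F^{proj}\to\Gamma(A_L^{lin})$ to get a $C^\infty(L)$-linear bracket-preserving section $\bar s:\Gamma(A_L^s)\to\Gamma(A_L^{lin})$ of the projection $\Gamma(A_L^{lin})\to\Gamma(A_L^s)$; this $\bar s$ corresponds to a vector bundle morphism $\sigma:A_L^s\to A_L^{lin}$, and being $C^\infty(L)$-linear and bracket-preserving it is a Lie algebroid morphism, hence the desired section. The one point requiring a word of care is that a flat Levi $\mathcal F$-connection as constructed in Proposition \ref{prop:exist_connexion} need not have image in the linear vector fields; but for the \emph{existence} statement this does not matter, since we only need the implication at the level of existence, and the projection to $\Gamma(A_L^{lin})$ automatically lands where we want regardless of linearity. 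The main (very mild) obstacle is thus purely bookkeeping: making sure all maps in the square are genuinely $C^\infty(L)$-linear Lie-Rinehart morphisms so that sections transport correctly, and invoking Theorem \ref{theo:EulerExists}(3) for the one step — passage through $\mathcal F^{proj}$ — where linearity of generators is actually used.
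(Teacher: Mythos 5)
Your argument is correct and is essentially the paper's own (implicit) proof: the paper treats the corollary as an immediate consequence of Theorem \ref{theo:EulerExists}(3), i.e. composing a Lie algebroid section $A_L^s\to A_L^{lin}$ with the bracket-preserving linear section $\Gamma(A_L^{lin})\to\mathcal F^{proj}$ in one direction, and projecting a flat Levi $\mathcal F$-connection down to $\Gamma(A_L^{lin})$ in the other. Your bookkeeping of the $C^\infty(L)$-linear, bracket-preserving splittings matches that intended argument, so there is nothing to add.
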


\section{The formal neighbourhood of a simply connected (singular) leaf}
\label{sec:formalthm}

\subsection{Lie algebroid cohomology of small degrees}

In order to prove our central Theorem \ref{thm:formal}, we will need the following statements about Lie algebroid cohomology. The Whitehead Lemmas I and II admit generalizations for Lie algebroids that we now state, using several results of Mackenzie (in \cite{MR2157566}):

\begin{lemma}[Whitehead Lemma I for Lie algebroids]\label{lem:h1iszero} 
Let $A\to L$ be a transitive Lie algebroid with semi-simple isotropies $\mathfrak g_L=ker(\rho)$.
If $\pi_1(L)=0$, then the Lie algebroid cohomology group $H^1(A,E)$ is trivial for any flat finite-dimensional $A$-module $E \to L$.
\end{lemma}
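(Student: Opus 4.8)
The plan is to reduce the statement to the classical Whitehead Lemma I for semisimple Lie algebras, fibered over $L$ and then patched together using $\pi_1(L)=0$. First I would recall the setup for Lie algebroid cohomology with coefficients in a flat module $E$: since $A$ is transitive, a choice of (not necessarily flat) Ehresmann connection $\sigma\colon TL\to A$ splits $A\cong TL\oplus\mathfrak g_L$ as vector bundles, and the Lie algebroid differential on $C^\bullet(A,E)=\Gamma(\Lambda^\bullet A^*\otimes E)$ decomposes according to this bidegree. This is exactly the setup used by Mackenzie in \cite{MR2157566}; in low degrees the relevant pieces are the ``fibrewise'' differential $d_{\mathfrak g}$ (the Chevalley--Eilenberg differential of $\mathfrak g_L$ along the fibres, with values in $E$) and the ``base'' differential coming from $TL$ with coefficients in the cohomology of the fibres.

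\textbf{Key steps.}
(1) Spectral sequence / filtration: filter $C^\bullet(A,E)$ by the $TL$-degree, so that the $E_1$-page is $H^q(\mathfrak g_L, E)$ sitting in degree $q$, with the base forming a local system over $L$; concretely $E_1^{p,q}=\Omega^p(L; \mathcal H^q)$ where $\mathcal H^q$ is the (flat) bundle $l\mapsto H^q(\mathfrak g_l, E_l)$. (2) Apply classical Whitehead Lemma I fibrewise: since each $\mathfrak g_l$ is semisimple and $E_l$ is finite-dimensional, $H^1(\mathfrak g_l,E_l)=0$, so $\mathcal H^1=0$; also $\mathcal H^0=E^{\mathfrak g_L}$ is the fixed-point subbundle, still a flat bundle over $L$. (3) Read off $H^1(A,E)$: the only contributions to total degree $1$ on the $E_2$-page are $E_2^{1,0}=H^1(L;\mathcal H^0)$ and $E_2^{0,1}=H^0(L;\mathcal H^1)$. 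The second is zero by step (2). For the first, $H^1(L;\mathcal H^0)$ is the cohomology of $L$ with coefficients in a flat bundle; since $\pi_1(L)=0$ every flat bundle is trivial, and $H^1(L;\mathbb R^k)=H^1(L)\otimes\mathbb R^k=0$ again because $\pi_1(L)=0$ (abelianization of the trivial group). Hence $E_2^{1,0}=0$, so $H^1(A,E)=0$. One should check the edge maps/differentials into total degree $1$ cannot revive anything — but on a first-quadrant spectral sequence nothing maps \emph{into} the line $p+q=1$ from lower total degree, so this is automatic.

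\textbf{Main obstacle.}
The hard part will be making the fibrewise-cohomology bundle $\mathcal H^q$ into a genuine flat vector bundle over $L$ with a well-behaved connection, independently of the chosen splitting $\sigma$, and justifying the spectral sequence in the Lie algebroid setting — this is where I would lean on the machinery already developed by Mackenzie in \cite{MR2157566} rather than rebuild it. A cleaner alternative that sidesteps the spectral sequence: argue directly with cocycles. Given a $1$-cocycle $\omega\in C^1(A,E)$, restrict it to $\mathfrak g_L$; fibrewise it is a $\mathfrak g_l$-cocycle valued in $E_l$, hence by Whitehead I a coboundary, and by semisimplicity (full reducibility) one can choose the primitive smoothly in $l$, so after subtracting a coboundary we may assume $\omega|_{\mathfrak g_L}=0$, i.e. $\omega$ is ``basic'' and descends to a $1$-form on $L$ with values in $E^{\mathfrak g_L}$ which is closed for the flat connection; then $\pi_1(L)=0$ kills it. Either route works; I would present the cocycle-level argument as the main proof and cite \cite{MR2157566} for the structural statements about $E^{\mathfrak g_L}$ and the induced flat connection.
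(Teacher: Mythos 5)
Your spectral-sequence argument is essentially the paper's own proof: it invokes exactly the same spectral sequence of Mackenzie (Theorem 7.4.5 in \cite{MR2157566}) with first page $H^t(L,H^s_{CE}(\mathfrak g_L,E))$, identifies the same two terms $H^0(L,H^1_{CE}(\mathfrak g_L,E))$ and $H^1(L,H^0_{CE}(\mathfrak g_L,E))$ contributing to total degree one, and kills them by the classical Whitehead Lemma and by $\pi_1(L)=0$ respectively; the ``main obstacle'' you flag (flatness of the fibrewise cohomology bundles) is precisely what the citation of \cite{MR2157566} absorbs in the paper as well. The cocycle-level alternative you sketch is a genuinely different and more elementary route, and it does go through: the only point you should make explicit is why the fibrewise primitive can be chosen smoothly, which follows canonically from complete reducibility via the decomposition $E_l=E_l^{\mathfrak g_l}\oplus \mathfrak g_l\cdot E_l$ (take the unique primitive in $\mathfrak g_l\cdot E_l$), and why, after subtracting that coboundary, the resulting cocycle takes values in $E^{\mathfrak g_L}$ (use that $\mathfrak g_L$ is an ideal, so the cocycle identity with one argument in $\Gamma(\mathfrak g_L)$ forces the values to be invariant). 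What the cocycle route buys is independence from the spectral-sequence machinery in degree one; what the paper's route buys is that the identical argument upgrades immediately to the degree-two statement (Whitehead Lemma II) needed later, which the hands-on cocycle computation would not give as cheaply.
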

\begin{proof}
Theorem 7.4.5 in \cite{MR2157566} asserts that there is a spectral sequence whose first page is $H^t(L, H^s_{CE}(\mathfrak g_L, E))$ converging to $H^\bullet(A, E)$. The only two terms contributing to $H^1(A,E)$ are $H^0(L,H^1_{CE}(\mathfrak g_L, E))$ and \newline $H^1(L,H^0_{CE}(\mathfrak g_L, E))$. The former space is trivial, due to Whitehead Lemma for Lie algebras and the semi-simplicity of $\mathfrak g_L$. The second one is trivial, because $L$ is simply connected.
\end{proof}

\begin{lemma} [Whitehead Lemma II for Lie algebroids]\label{lem:WhiteheadII}
Let $A\to L$ be a transitive Lie algebroid with semi-simple isotropies $\mathfrak g_L=ker(\rho)$. If $\pi_1(L)=\pi_2(L)=0$, then the Lie algebroid cohomology group $H^2(A,E)$ is trivial for any flat finite-dimensional $A$-module $E\to L$.
\end{lemma}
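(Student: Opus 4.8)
The plan is to mimic the proof of Whitehead Lemma I (Lemma \ref{lem:h1iszero}) using the same spectral sequence from Theorem 7.4.5 in \cite{MR2157566}, whose first page reads $E_2^{s,t} = H^t(L, H^s_{CE}(\mathfrak g_L, E))$ and which converges to $H^\bullet(A,E)$. The difference is that now I must control all terms of total degree $2$, namely $H^0(L, H^2_{CE}(\mathfrak g_L, E))$, $H^1(L, H^1_{CE}(\mathfrak g_L, E))$ and $H^2(L, H^0_{CE}(\mathfrak g_L, E))$.

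First I would dispose of the two ``easy'' terms. Since $\mathfrak g_L$ has semi-simple fibres, the classical Whitehead Lemmas give $H^1_{CE}(\mathfrak g_l, E_l) = 0$ and $H^2_{CE}(\mathfrak g_l, E_l) = 0$ fibrewise; I should note that these vanishings are compatible with the bundle/local-system structure (the functor $V \mapsto H^\bullet_{CE}(\mathfrak g_l, V)$ applied fibrewise to a flat bundle yields a flat bundle, which here is the zero bundle), so $H^t(L, H^1_{CE}(\mathfrak g_L,E)) = 0$ and $H^t(L, H^2_{CE}(\mathfrak g_L,E)) = 0$ for all $t$; in particular the $(0,2)$ and $(1,1)$ entries vanish. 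For the remaining term $H^2(L, H^0_{CE}(\mathfrak g_L, E))$, the invariants $H^0_{CE}(\mathfrak g_L, E) = E^{\mathfrak g_L}$ form a flat vector bundle over $L$, i.e. a finite-dimensional local system; since $\pi_1(L) = \pi_2(L) = 0$, the universal coefficient theorem together with $H_1(L) = H_2(L) = 0$ (Hurewicz) forces $H^2(L, \mathcal{L}) = 0$ for any local system $\mathcal{L}$ with finite-dimensional fibre — or, more directly, a simply connected space with $\pi_2 = 0$ has $H^2$ vanishing with any coefficients. Hence all three contributing terms on the $E_2$-page vanish, and since $H^2(A,E)$ admits a filtration whose graded pieces are subquotients of these, we conclude $H^2(A,E) = 0$.

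The main subtlety — and the step I would be most careful about — is the ``fibrewise Whitehead is compatible with the base'' claim: one needs that $H^s_{CE}(\mathfrak g_L, E)$ is genuinely a well-defined flat bundle (local system) over $L$ to which sheaf/singular cohomology $H^t(L, -)$ applies, rather than just a pointwise statement. This is exactly what the $E_2$-page of Mackenzie's spectral sequence encodes, so once one is entitled to invoke Theorem 7.4.5 of \cite{MR2157566} the compatibility is automatic; but it is worth a sentence to say that the vanishing of the fibre $H^1_{CE}, H^2_{CE}$ makes the corresponding bundles zero, hence their base cohomology vanishes with no further argument. A secondary point is to make sure ``finite-dimensional'' is used where needed: it guarantees $E^{\mathfrak g_L}$ is a finite-rank flat bundle so that $H^2(L,-)$ of it behaves as claimed. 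No essential obstacle beyond bookkeeping on the spectral sequence is expected.
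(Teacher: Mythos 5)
Your proposal is correct and follows essentially the same route as the paper: the same Mackenzie spectral sequence, with the $(0,2)$ and $(1,1)$ terms killed by the classical Whitehead Lemmas for the semi-simple fibres and the $(2,0)$ term killed by $2$-connectedness of $L$ (the paper cites Theorem 6.5.16 of \cite{MR2157566} for this last point, where you argue it directly via Hurewicz and universal coefficients, which is a fine substitute).
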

\begin{proof}
The proof uses the same spectral sequence as the proof of Lemma \ref{lem:h1iszero}. Here, the components of the first page required to be trivial are $H^0(L, H^2_{CE}(\mathfrak g_L, E))$, $H^1(L, H^1_{CE}(\mathfrak g_L, E))$, and $H^2(L, H^0_{CE}(\mathfrak g_L, E))$. The former two are again trivial by the Whitehead Lemmas for Lie algebras, and the last one by 2-connectedness of $L$ (in view of Theorem 6.5.16 in \cite{MR2157566}).
\end{proof}

We prove the following generalization of the Levi-Malcev theorem, which is also a prototype for our central Theorem \ref{thm:formal}.
\begin{proposition}
\label{LeviMalcev}
Let $A$ be a transitive Lie algebroid over a 2-connected base $L$ and $A^s$ its semi-simple quotient. Then there exists a Lie algebroid section $s:A^s\to A$ of the projection $ \pi \colon A \to A^s$. 
\end{proposition}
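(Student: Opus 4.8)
The plan is to build the section $s\colon A^s\to A$ by a two-step ``devissage'' along the Levi exact sequence $\mathfrak{rad}(A)\to A\to A^s$, filtering the nilpotent-on-the-fibers obstruction bundle $\mathfrak{rad}(A)$ by its lower central series. The key point is that the obstruction to lifting a Lie algebroid morphism $A^s\to A/\mathfrak h$ to $A^s\to A/\mathfrak h'$, where $\mathfrak h'\subset\mathfrak h$ are two consecutive $A^s$-invariant ideal subbundles of $\mathfrak{rad}(A)$ with abelian quotient $\mathfrak h/\mathfrak h'$, lives in the second Lie algebroid cohomology $H^2(A^s, \mathfrak h/\mathfrak h')$, and that the choices of such a lift form a torsor over $H^1(A^s,\mathfrak h/\mathfrak h')$. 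Since $A^s$ is transitive with semi-simple isotropy and $L$ is $2$-connected, Lemma~\ref{lem:WhiteheadII} gives $H^2(A^s,E)=0$ for every flat finite-dimensional $A^s$-module $E$, so each step goes through, and after finitely many steps (the nilpotency length of the fiberwise radical, which is finite by transitivity / local triviality) we arrive at a genuine section of $A\to A^s$.

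First I would set up the filtration: let $\mathfrak r=\mathfrak{rad}(A)$ and consider its lower central series $\mathfrak r=\mathfrak r^{(1)}\supset\mathfrak r^{(2)}\supset\cdots\supset\mathfrak r^{(N+1)}=0$, where $\mathfrak r^{(k+1)}=[\mathfrak r,\mathfrak r^{(k)}]$. Because $\mathfrak r$ is a bundle of solvable Lie algebras that is locally trivial (Mackenzie, as quoted before the statement), each $\mathfrak r^{(k)}$ is a subbundle, each is an ideal in $A$ (its sections form an ideal in $\Gamma(A)$, being invariant under the local product structure), and each successive quotient $\mathfrak r^{(k)}/\mathfrak r^{(k+1)}$ is an abelian Lie algebra bundle carrying a natural flat $A$-action, hence — since it is killed by $\mathfrak r$ — a flat $A^s$-action, i.e.\ a flat finite-dimensional $A^s$-module. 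Then I would prove the one-step lifting lemma: given a Lie algebroid splitting $s_k\colon A^s\to A/\mathfrak r^{(k+1)}$ of $A/\mathfrak r^{(k+1)}\to A^s$, the pullback $s_k^*(A/\mathfrak r^{(k+2)})$ is an abelian extension of $A^s$ by the module $\mathfrak r^{(k+1)}/\mathfrak r^{(k+2)}$, classified by a class in $H^2(A^s,\mathfrak r^{(k+1)}/\mathfrak r^{(k+2)})$; this class vanishes by Lemma~\ref{lem:WhiteheadII}, and choosing a splitting of the abelian extension produces $s_{k+1}$.

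The induction starts with the trivial splitting of $A/\mathfrak r^{(1)}=A/\mathfrak r=A^s$ over itself (the identity), and terminates with $s_N\colon A^s\to A/\mathfrak r^{(N+1)}=A$, which is the desired Lie algebroid section; at each stage one only needs $H^2$ of an $A^s$-module to vanish, and $A^s$ — not $A$ — has semi-simple isotropy, which is exactly the hypothesis of Lemma~\ref{lem:WhiteheadII}. It is worth noting that $2$-connectedness (not merely simple connectedness) is genuinely used: Lemma~\ref{lem:h1iszero} would only control $H^1$, but we need $H^2$ to vanish to kill the extension classes, whence the $\pi_2(L)=0$ hypothesis; this is also why the analogous statement for merely $1$-connected $L$ (our central theorems) requires extra input.

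The main obstacle I anticipate is the bookkeeping required to make the ``abelian extension classified by $H^2$'' picture precise in the Lie algebroid setting: one must check that $\mathfrak r^{(k)}/\mathfrak r^{(k+1)}$ really is a \emph{flat} $A^s$-module (the $A$-action descends to $A^s$ because $\mathfrak r$ acts trivially on each graded piece of its own lower central series — this uses that $\mathfrak r$ is an ideal, so $[\mathfrak r,\mathfrak r^{(k)}]\subset\mathfrak r^{(k+1)}$ by definition), that the pullback along $s_k$ of the extension $\mathfrak r^{(k+1)}/\mathfrak r^{(k+2)}\to A/\mathfrak r^{(k+2)}\to A/\mathfrak r^{(k+1)}$ is indeed an abelian (not merely central) extension of transitive Lie algebroids in Mackenzie's sense, and that the associated characteristic class lives in $H^2(A^s,-)$ as computed by the spectral sequence used in Lemmas~\ref{lem:h1iszero}–\ref{lem:WhiteheadII}. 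None of these is deep, but getting the module structures and the base-change compatible takes care; the cohomological vanishing itself is immediate from the Whitehead Lemma II for Lie algebroids.
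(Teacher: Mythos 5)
Your overall strategy---devissage along an $A^s$-invariant filtration of $\mathfrak{rad}(A)$ with abelian subquotients, killing the obstruction at each step with Whitehead Lemma II---is exactly the paper's strategy, but your choice of filtration breaks the argument. You filter $\mathfrak r=\mathfrak{rad}(A)$ by its \emph{lower central series} $\mathfrak r^{(k+1)}=[\mathfrak r,\mathfrak r^{(k)}]$ and assert it reaches $0$ because the fiberwise radical is ``nilpotent-on-the-fibers''. That is false: the radical is only \emph{solvable}, and the lower central series of a solvable non-nilpotent Lie algebra stabilizes at a nonzero ideal (already for the two-dimensional algebra $\langle x,y\mid [x,y]=y\rangle$, which can occur as the isotropy radical of a transitive algebroid over a $2$-connected base, e.g.\ a direct product $TL\oplus(\mathfrak g\times L)$). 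So your induction never terminates: it only produces a morphism into $A/\mathfrak r^{(\infty)}$ with $\mathfrak r^{(\infty)}\neq 0$ in general. (The nilpotency result in the paper, Theorem \ref{thm:arnilpotence}, concerns $\ker(A_L\to A_L^{lin})$, not the radical.)

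The paper avoids this by using the \emph{derived series} $\mathfrak r^{i+1}=[\mathfrak r^i,\mathfrak r^i]$, which does terminate by solvability. Note that this forces a change in how the module structure on the subquotients is obtained: your justification that the $A$-action descends to $A^s$ uses precisely $[\mathfrak r,\mathfrak r^{(k)}]\subset\mathfrak r^{(k+1)}$, which fails for the derived series. Instead, the paper defines the action of $A^s$ on $\mathfrak r^i/\mathfrak r^{i+1}$ through the current approximate section $s^i$, $(\xi,\overline b)\mapsto\overline{[s^i(\xi),b]}$; flatness then follows because the failure of flatness is bracketing with the curvature $c^i$, which is $\mathfrak r^i$-valued, and $[\mathfrak r^i,\mathfrak r^i]\subset\mathfrak r^{i+1}$. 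With that module structure, $c^i \bmod \mathfrak r^{i+1}$ is a $2$-cocycle, Lemma \ref{lem:WhiteheadII} makes it exact, and correcting $s^i$ by a primitive drops the curvature one step down the (finite) derived filtration. Your extension-theoretic packaging (pulling back the abelian extension $A/\mathfrak r^{i+1}\to A/\mathfrak r^i$ along $s_i$ and splitting it) can be made to work with the derived series as well, since the quotient acts on the abelian kernel; but as written, with the lower central series and the claim of fiberwise nilpotency, the proof has a genuine gap.
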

\begin{proof}
The Lie algebra bundle $ \mathfrak{rad}(A) = {\mathrm{ker}}(\pi)$, defined in Section \ref{sec:semisection} comes with a terminating natural filtration by Lie algebra bundles
\begin{align} \label{eq:filt} \mathfrak r^0=  \mathfrak{rad}(A) ~~~\supset~~~ \mathfrak r^1=[\mathfrak r^0, \mathfrak r^0]~~~\supset~~~ \mathfrak r^2=[\mathfrak r^1,\mathfrak r^1]~~~~\supset~~~ ...~~~\supset \mathfrak r^N=0\end{align}
such that the subquotients $\tfrac{\mathfrak r^i}{\mathfrak r^{i+1}}$ are Abelian.
 We construct $s$  by induction.
 Let $s^0: A^s \to A$ be any linear section: Its curvature is valued in the radical $\mathfrak r^0=\mathfrak{rad}(A)$.
Assume that there exists a section $ s^i : A^s \to A$ whose curvature $ c^i$ is
$\mathfrak r^i$-valued, then:
\begin{enumerate}
    \item the quotient space $  \mathfrak r^{i} / \mathfrak r^{i+1} $ is an $ A^s$-module for: 
$ (\xi,\overline{b} )  \mapsto \overline{ [ s^i (\xi) , b ]}$,
with $ \xi \in \Gamma(A^s), b \in \Gamma(\mathfrak r^i)$,
    \item the skew-symmetric bilinear map:
$$  (\xi, \zeta) \mapsto  c^i(\xi, \zeta) \mod \mathfrak r^{i+1} $$
is a Lie algebroid $2$-cocycle of $ A^s $, valued in $  \mathfrak r^{i} / \mathfrak r^{i+1} $.
\end{enumerate}
By the second Whitehead Lemma \ref{lem:WhiteheadII}, this cocycle is a coboundary, so that there exists $\sigma_i : A \to \mathfrak r^i  $ such that:
  $$  c_i (\xi,\zeta)  = [s^i (\xi), \sigma^i (\zeta) ]  + [\sigma^i (\xi) , s^i (\zeta)] - \sigma^i ([\xi,\zeta]) \hbox{ mod }  \mathfrak r^{i+1} .$$ 
This means that the curvature of $s^{i+1} = s^i + \sigma^i $ is  
 $\mathfrak r^{i+1}$-valued. Since the filtation \eqref{eq:filt} terminates at degree~$N $, $ s^N$ is a Lie algebroid section.
\end{proof}
 
 \begin{remark}
 Proposition \ref{LeviMalcev} should not be confused from Zung and Monnier-Zung's Levi Theorem for Lie algebroids \cite{Zung, ZungMonnier} which is valid for any Lie algebroid, not only transitive ones, but is a local result.
 \end{remark}

Let us state an immediate consequence of Proposition \ref{LeviMalcev} and Theorem \ref{thm:arnilpotence}: 

\begin{corollary}\label{cor:sectohol}
Let $\mathcal F$ be a singular foliation and $L$ a locally closed leaf. If $\pi_1(L)=\pi_2(L)=0$, then there exists a Lie algebroid section $A_L^s\to A_L^{lin}$. In case $\mathcal F$ is locally real analytic, then there even exists a Lie algebroid section $A_L^s\to A_L$.
\end{corollary}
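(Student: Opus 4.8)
The plan is to combine two ingredients that are already in hand. The first is Proposition~\ref{LeviMalcev}, applied to the transitive Lie algebroid $A=A_L^{lin}$ over the $2$-connected base $L$; this immediately produces a Lie algebroid section $A_L^{lin,s}\to A_L^{lin}$ of the projection onto the semi-simple quotient of $A_L^{lin}$. The second is Proposition~\ref{prop:linTos}, which in the locally real analytic case gives a natural Lie algebroid morphism $A_L^{lin}\to A_L^s$ factoring $A_L\to A_L^s$; the key input there, from Theorem~\ref{thm:arnilpotence}, is that the kernel of $A_L\to A_L^{lin}$ is a \emph{nilpotent} Lie algebra bundle, hence lies in the fiberwise radical. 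So the first thing I would check is that the semi-simple quotient of $A_L^{lin}$ \emph{is} $A_L^s$, i.e. that the composite $A_L^{lin}\to A_L^s$ identifies $A_L^s$ with $A_L^{lin}/\mathfrak{rad}(A_L^{lin})$. This holds because $A_L^{lin}$ and $A_L$ have the same image in $\mathfrak{gl}(\nu)$-type data only up to the nilpotent kernel, so the fiberwise isotropy of $A_L^{lin}$ is $\mathfrak g_L/(\text{nilpotent ideal})$ and its further quotient by the radical is again $\mathfrak g_L^s$. Thus the section of Proposition~\ref{LeviMalcev} for $A_L^{lin}$ is exactly a section $A_L^s\to A_L^{lin}$, proving the first assertion. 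Note we only need $\pi_1(L)=\pi_2(L)=0$ here, not full $2$-connectedness in any stronger sense, since that is precisely the hypothesis of Lemma~\ref{lem:WhiteheadII} used inside Proposition~\ref{LeviMalcev}.

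For the second assertion (locally real analytic case: a section all the way to $A_L$), the idea is to bootstrap the section $s^{lin}:A_L^s\to A_L^{lin}$ through the surjection $A_L\to A_L^{lin}$, whose kernel $A_L^2$ is, by Theorem~\ref{thm:arnilpotence}, a \emph{nilpotent} Lie algebra bundle (here is where local real analyticity, hence Artin--Rees, enters). I would run the same inductive obstruction argument as in the proof of Proposition~\ref{LeviMalcev}, but now filtering $A_L^2$ by its lower central series $A_L^2 = \mathfrak n^0 \supset \mathfrak n^1=[\mathfrak n^0,A_L^2]\supset \cdots \supset \mathfrak n^N=0$, which terminates by nilpotence (uniform nilpotency degree bounded by the Artin--Rees bound). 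Starting from any linear lift of $s^{lin}$ to $A_L$, the curvature is $A_L^2$-valued; at each stage the obstruction to improving the lift modulo $\mathfrak n^{i+1}$ is a class in $H^2(A_L^s, \mathfrak n^i/\mathfrak n^{i+1})$, which vanishes by Lemma~\ref{lem:WhiteheadII} since $A_L^s$ is transitive with semi-simple isotropies and $\pi_1(L)=\pi_2(L)=0$ and the subquotients are finite-dimensional flat $A_L^s$-modules (one must check the module structure is well defined and flat, exactly as in Proposition~\ref{LeviMalcev} step (1)). After finitely many steps one reaches a genuine Lie algebroid section $A_L^s\to A_L$.

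The single point requiring care — the main obstacle — is the well-definedness and flatness of the $A_L^s$-module structure on the successive subquotients $\mathfrak n^i/\mathfrak n^{i+1}$, together with the compatibility of the two filtrations: one wants $[s^{(i)}(\xi),\,\cdot\,]$ on $\mathfrak n^i$ to descend to $\mathfrak n^i/\mathfrak n^{i+1}$ independently of the choice of intermediate lift $s^{(i)}$, which follows because changing $s^{(i)}$ by something $\mathfrak n^i$-valued changes the bracket by something in $[\mathfrak n^i,\mathfrak n^i]\subseteq \mathfrak n^{i+1}$, and because $\mathfrak n^{i}$ is an ideal so $[A_L^2,\mathfrak n^i]\subseteq \mathfrak n^i$ makes the action pass to the quotient by $A_L^2\supset \mathfrak n^1$, i.e. factor through $A_L^s$. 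The remaining verifications (that the curvature cochain is a genuine $2$-cocycle valued in the relevant module, and that the correction $\sigma^i$ indeed kills the curvature mod $\mathfrak n^{i+1}$) are formally identical to the computation already carried out in the proof of Proposition~\ref{LeviMalcev}, so I would simply say ``the induction proceeds exactly as in the proof of Proposition~\ref{LeviMalcev}, with the terminating filtration \eqref{eq:filt} of $\mathfrak{rad}(A)$ replaced by the lower central series of the nilpotent bundle $A_L^2$,'' and invoke Theorem~\ref{thm:arnilpotence} for termination and Lemma~\ref{lem:WhiteheadII} for the vanishing of obstructions.
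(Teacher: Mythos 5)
Your proposal is correct in substance and uses exactly the two ingredients the paper cites (Proposition~\ref{LeviMalcev} and Theorem~\ref{thm:arnilpotence}), but it assembles them along a genuinely different and longer route than the paper intends. The paper's argument is shorter: $A_L$ is itself a transitive Lie algebroid over the $2$-connected base $L$, and $A_L^s$ is \emph{by definition} its semi-simple quotient, so Proposition~\ref{LeviMalcev} applied directly to $A=A_L$ already yields the Lie algebroid section $A_L^s\to A_L$ in one stroke, with a single induction over the derived series of $\mathfrak{rad}(A_L)$; composing with $A_L\to A_L^{lin}$ gives the map $A_L^s\to A_L^{lin}$, and Theorem~\ref{thm:arnilpotence} (through Proposition~\ref{prop:linTos}) is what makes this composite a section of the natural projection $A_L^{lin}\to A_L^s$. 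You instead apply Proposition~\ref{LeviMalcev} to $A_L^{lin}$ and then run a second obstruction-theoretic induction to lift through the kernel $A_L^2$, filtered by its lower central series. That second induction does go through: the subquotients are abelian, the module structures are well defined and flat because the curvature at stage $i$ is $\mathfrak n^i$-valued and $[\mathfrak n^i,\mathfrak n^i]\subset\mathfrak n^{i+1}$, the obstruction classes die by Lemma~\ref{lem:WhiteheadII}, and Theorem~\ref{thm:arnilpotence} gives termination. What your route buys is the observation that the derived series of the radical in Proposition~\ref{LeviMalcev} can be traded for the lower central series of a nilpotent kernel; what it costs is economy, since you re-prove by hand a statement that is a direct instance of a proposition already at your disposal. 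Note also that with the paper's assembly the section $A_L^s\to A_L$ requires no analyticity at all (only $2$-connectedness); analyticity only enters when comparing with $A_L^{lin}$.

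One point deserves a flag. Your proof of the \emph{first} assertion identifies the semi-simple quotient of $A_L^{lin}$ with $A_L^s$ by invoking the nilpotence of $\ker(A_L\to A_L^{lin})$, i.e.\ Theorem~\ref{thm:arnilpotence}, which is only available for locally real analytic $\mathcal F$ --- whereas the corollary states that assertion for an arbitrary singular foliation. In the merely smooth case the filtration on $A_L$ need not terminate and $A_L^2$ is not known to be solvable, so the identification $(A_L^{lin})^s\cong A_L^s$ underpinning your first step is not justified in that generality; to produce the morphism $A_L^s\to A_L^{lin}$ for a general smooth foliation one should argue as the paper does, applying Proposition~\ref{LeviMalcev} to $A_L$ itself and pushing the resulting section forward along $A_L\to A_L^{lin}$. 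For the locally real analytic case --- the one actually used in Theorem~\ref{thm:formal} and Corollary~\ref{thm:formalpi2} --- your argument is complete.
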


\subsection{Formal singular foliations}

In this section, we define the formal counterparts of several notions that have been studied in this article.

For $L$ be a submanifold of $M$, we denote by ${\widehat{\mathcal C}} $ the algebra of formal functions along $L$, i.e. (by Borel's Theorem) the quotient of $ C^\infty (M)$ by the ideal of functions vanishing with all their derivatives along $L$.
We call \emph{formal vector fields along $L$} derivations of  $\mathcal {\widehat{\mathcal C}} $.

We call \emph{formal singular foliation along $L$} locally finitely generated  ${\widehat{\mathcal C}} $-submodules of formal vector fields along $L$ which are closed under Lie bracket.
For every singular foliation $ \mathcal F$, the tensor product ${\widehat{ \mathcal F}} := {\widehat{\mathcal C}} \otimes_{ C^\infty(M)} \mathcal F $ is a formal singular foliation along $L$ called the \emph{formal jet of $ \mathcal F$ along $L$}. 
We will only consider formal singular foliations for which $L$ is a leaf, i.e. such that the restriction to $L$ is onto $\mathfrak X(L) $.

As for the non-formal case, we call \emph{holonomy Lie algebroid} of a formal singular foliation $\widehat{\mathcal F} $ along $L$ the Lie algebroid whose space of sections is the quotient ${\widehat{ \mathcal F}}/ \widehat{I}_L {\widehat{ \mathcal F}} $, with  $ \widehat{I}_L = {\widehat{\mathcal C}} \otimes_{ C^\infty(M)}  I_L\subset {\widehat{\mathcal C}}$ the ideal of formal functions vanishing along $L$. As in Lemma \ref{lem:ra=smooth}, since formal functions are a faithfully flat module over real analytic functions (cf.
Theorem III.4.9 in \cite{MR0212575}), we have:

\begin{lemma} If a singular foliation $ \mathcal F$ is locally real analytic, then for every leaf $L$,  the holonomy Lie algebroids of $\mathcal F $ and of its formal jet $ \widehat{\mathcal F}$ along $L$ are isomorphic. In equation: ${\mathcal F}/  I_L {\mathcal F} \simeq \widehat{\mathcal F}/ {\widehat I}_L \widehat{\mathcal F} $.
\end{lemma}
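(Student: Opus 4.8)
The plan is to reduce the smooth statement to the purely algebraic statement about faithful flatness, exactly mirroring the proof of Lemma~\ref{lem:ra=smooth}. Let $p\in L$ and restrict to a neighbourhood $U$ of $p$ on which $\mathcal F$ admits real analytic generators $X_1,\dots,X_r$ in some coordinate chart, with $\mathcal F^{ra}$ the module they generate over the ring $\mathcal C^{ra}$ of real analytic functions on $U$. Then $C^\infty(U)$ and the ring $\widehat{\mathcal C}$ of formal power series along $L\cap U$ are both faithfully flat $\mathcal C^{ra}$-modules — the first by Corollary~VI.1.12, the second by Theorem~III.4.9, both in \cite{MR0212575}. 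The key point is that flatness makes the formation of the holonomy Lie algebroid commute with base change.

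\textbf{Key steps.} First I would record that $\mathcal F|_U = C^\infty(U)\otimes_{\mathcal C^{ra}}\mathcal F^{ra}$ and $\widehat{\mathcal F}|_{U\cap L} = \widehat{\mathcal C}\otimes_{\mathcal C^{ra}}\mathcal F^{ra}$: for the smooth case this is flatness applied to the finite presentation of $\mathcal F^{ra}$ together with the fact that local generators of $\mathcal F^{ra}$ remain local generators after the faithfully flat extension (this is essentially the content already used implicitly in the proof of Lemma~\ref{lem:ra=smooth} via Malgrange's theorem), and similarly in the formal case. Next, $I_L|_U = C^\infty(U)\otimes_{\mathcal C^{ra}}\mathcal I$ and $\widehat{I}_L|_{U\cap L}=\widehat{\mathcal C}\otimes_{\mathcal C^{ra}}\mathcal I$, where $\mathcal I$ is the ideal of real analytic functions vanishing on $L\cap U$; here one uses that $L\cap U$ is a real analytic subvariety and that $I_L$ (resp. $\widehat I_L$) is generated by $\mathcal I$ over the larger ring, again using flatness to see that the extension of the ideal is the full vanishing ideal. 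Then, since tensoring with a flat module commutes with quotients, we get
\[
\Gamma(A_L|_{U\cap L}) \;=\; \frac{\mathcal F|_U}{I_L|_U\,\mathcal F|_U}\;\cong\;\widehat{\mathcal C}\otimes_{\mathcal C^{ra}}\frac{\mathcal F^{ra}}{\mathcal I\,\mathcal F^{ra}}\;=\;\frac{\widehat{\mathcal F}|_{U\cap L}}{\widehat I_L|_{U\cap L}\,\widehat{\mathcal F}|_{U\cap L}}\;=\;\Gamma(A_L^{\mathrm{formal}}|_{U\cap L}),
\]
where the middle isomorphism is exactly Lemma~\ref{lem:ra=smooth} (which identifies $\mathcal F^{ra}/\mathcal I\mathcal F^{ra}$ with $\mathcal F/I_L\mathcal F$ locally), and the outer identifications use the base-change formulas. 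Since the natural map $\mathcal F\to\widehat{\mathcal F}$ induces a morphism of the two holonomy Lie algebroids compatible with all this, and it is a local isomorphism at every point of $L$, it is a global isomorphism of Lie algebroids.

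\textbf{Main obstacle.} The routine part is the flatness bookkeeping; the genuinely delicate step is justifying that local \emph{generators} of $\mathcal F^{ra}$ pass to local generators of $\mathcal F$ and of $\widehat{\mathcal F}$, i.e. that $\mathcal F = C^\infty\otimes_{\mathcal C^{ra}}\mathcal F^{ra}$ and $\widehat{\mathcal F} = \widehat{\mathcal C}\otimes_{\mathcal C^{ra}}\mathcal F^{ra}$ as submodules of the respective modules of vector fields, rather than merely having a surjection. This is where faithful flatness (not just flatness) is used: it guarantees that the inclusion $\mathcal F^{ra}\hookrightarrow \mathcal X^{ra}$ of the submodule generated by the $X_i$ into all real analytic vector fields stays injective after base change, so the image of $C^\infty\otimes_{\mathcal C^{ra}}\mathcal F^{ra}$ in $\mathfrak X(U)$ is genuinely the $C^\infty(U)$-submodule generated by the $X_i$, which is $\mathcal F|_U$ by definition of "locally real analytic." The formal version is identical with $\widehat{\mathcal C}$ in place of $C^\infty$. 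Once this is in place — and it is precisely the mechanism already invoked for Lemma~\ref{lem:ra=smooth} — the rest is formal.
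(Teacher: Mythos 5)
Your proposal is correct and follows essentially the same route as the paper: the paper's proof is exactly the observation that one can repeat the argument of Lemma \ref{lem:ra=smooth}, replacing faithful flatness of smooth functions over real analytic functions by faithful flatness of formal functions over real analytic functions (Theorem III.4.9 in \cite{MR0212575}). Your more detailed bookkeeping of the base-change identifications for $\mathcal F$, $I_L$ and their formal counterparts is just an explicit spelling-out of what the paper leaves implicit.
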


The notions of Levi $\mathcal F $-connections and Ehresmann $\mathcal F $-connections have formal equivalents for a formal singular foliation $ \widehat{ \mathcal F}$ near a leaf $L$.
First, let us choose a tubular neighborhood $p: U \to L $. A formal vector field $X$ is said to be projectable if it preserves $p^*  \mathcal C^\infty(L) \subset \widehat{\mathcal C}$.  Denote formal projectable vector fields in  $\widehat{ \mathcal F} $ by $ \widehat{ \mathcal F}^{proj} $.
By construction, $ \widehat{ \mathcal F}^{proj} $ is a Lie-Rinehart algebra over $C^\infty(L)$.
There are natural Lie-Rinehart algebra morphisms from  $ \widehat{ \mathcal F}^{proj} $ to 
$ \Gamma(A_L^s) $ and $ \mathfrak X (L) $ respectively as in \eqref{xypic:surjections}. 
We call
\emph{formal Levi $\mathcal F $-connection} (resp. \emph{formal Ehresmann $\mathcal F $-connection}) a $C^\infty(L)$-linear section $\Gamma(A_L^s)\to\widehat{ \mathcal F}^{proj}$ (resp $\mathfrak X (L) \to \widehat{ \mathcal F}^{proj}  )$ of these natural surjections.

\begin{remark}
\label{rmk:formalFlow}
For future use, notice that  the time $t$-flow of a formal vector field $X$ tangent to $L$ is a well-defined algebra isomorphism of $\widehat{\mathcal C} $, as long as the time $t$-flow of its restriction to $L$ is defined. If $X$ is $p$-projectable for some $p \colon M \to L $, this formal diffeomorphism maps $p$-fibers to $p$-fibers. 
\end{remark}

\subsection{Existence of flat Levi $ \mathcal F$-connections}

The main goal of the section is to prove the following theorem:

\begin{theorem}\label{thm:formal}
Let $L$ be a locally closed leaf of the locally real analytic foliation $\mathcal F$. If $\pi_1(L)=0$, and there exists a Lie algebroid section $z$ from the semi-simple holonomy $A_L^s$ to the linear holonomy $A_L^{lin}$, then there exist
\begin{itemize}
    \item an $\mathcal F$-neighbourhood $(U,\pi)$,
    \item a formal Levi  $ \mathcal F$-connection $s^\infty:\Gamma(A_L^s)\to \widehat{ \mathcal F}^{proj}$,
    \item and a $\pi$-vertical formal Euler-like vector field $\mathcal E^\infty$.
\end{itemize}
such that
\begin{itemize}
\item $s^\infty$ is linear with respect to $\mathcal E^\infty$, i.e. $[s^\infty(\xi), \mathcal E^\infty]=0$ for all $\xi\in \Gamma(A_L^s)$,
\item  $s^\infty$ is flat, i.e.   $[s^\infty(\xi), s^\infty(\zeta)]=s^\infty([\xi,\zeta])$ for all $\xi,\zeta \in \Gamma(A_L^s)$.
\end{itemize}
\end{theorem}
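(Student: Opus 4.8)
The strategy is to construct $\mathcal E^\infty$ and $s^\infty$ recursively, order by order in the formal jet along $L$, correcting at each step a cohomology class that vanishes by the Whitehead Lemma I for Lie algebroids (Lemma \ref{lem:h1iszero}), which applies since $\pi_1(L)=0$ and $A_L^s$ has semi-simple isotropy. First I would fix an $\mathcal F$-neighbourhood $(U,\pi)$ (shrinking as needed) and a genuine Euler-like vector field $\mathcal E$ tangent to $\pi$; by Lemma \ref{lem:Meirenken} this identifies $U$ with (a neighbourhood of the zero section in) the normal bundle $\nu\to L$, so that $\widehat{\mathcal C}$ carries a grading by fiberwise polynomial degree and every formal projectable vector field decomposes into $\mathcal E$-homogeneous components. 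The hypothesis supplies a Lie algebroid section $z\colon A_L^s\to A_L^{lin}$; composing with a $C^\infty(L)$-linear lift $A_L^{lin}\to\mathcal F^{proj}$ given by Theorem \ref{theo:EulerExists}(3) (after choosing $\mathcal E$) and keeping only linear components, I obtain a linear Levi $\mathcal F$-connection $s_1$ which is \emph{already} flat modulo higher-order vector fields (its curvature is a Lie algebroid $2$-cocycle valued in vector fields vanishing to higher order) and which commutes with $\mathcal E$ up to higher order.

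Now the inductive step. Suppose by induction we have a $\pi$-vertical formal vector field $\mathcal E^{(k)}$ whose linearization is Euler and whose higher-order defect $[\mathcal E^{(k)},\mathcal E^{(k)}]$-type obstruction lives in degree $\ge k$, together with a $C^\infty(L)$-linear section $s^{(k)}\colon\Gamma(A_L^s)\to\widehat{\mathcal F}^{proj}$ such that both the flatness defect $\xi,\zeta\mapsto [s^{(k)}(\xi),s^{(k)}(\zeta)]-s^{(k)}([\xi,\zeta])$ and the linearity defect $\xi\mapsto[s^{(k)}(\xi),\mathcal E^{(k)}]$ are valued in vector fields of $\mathcal E$-homogeneous degree $\ge k$. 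The degree-$k$ part of the flatness defect is a Lie algebroid $1$- (respectively $2$-) cocycle of $A_L^s$ with values in the finite-dimensional flat $A_L^s$-module of degree-$k$ homogeneous $\pi$-vertical vector fields along $L$ — here one uses that $\mathcal F$ is generated by $\mathcal E$-homogeneous vector fields (Theorem \ref{theo:EulerExists}(1)-(2)), so the correction terms can be taken inside $\widehat{\mathcal F}$, and that $s^{(k)}$ itself makes this module an $A_L^s$-module. The key point is that this is genuinely a \emph{cocycle}, which follows from the graded Jacobi identity and the inductive vanishing in lower degrees; and it is a $1$-cocycle for the Euler defect, a $2$-cocycle for the flatness defect. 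By Lemma \ref{lem:h1iszero} (and its degree-$2$ analogue is \emph{not} needed — here the flatness defect at order $k$ is handled as an $H^1$ obstruction once one absorbs $\mathcal E^\infty$ first, or more carefully via the standard "first kill $\mathcal E$, then the curvature is $z$-determined" argument as in the finite case of Theorem \ref{theo:EulerExists}), these classes are coboundaries, so there are corrections $\mathcal E^{(k+1)}=\mathcal E^{(k)}+(\text{degree-}k\text{ term})$ and $s^{(k+1)}=s^{(k)}+(\text{degree-}k\text{ correction})$ pushing both defects into degree $\ge k+1$. Passing to the formal limit — which converges in the $\widehat{\mathcal C}$-adic topology since each correction is purely of degree $k$ and affects no lower degree — yields $\mathcal E^\infty$ and $s^\infty$ with the required properties.

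The main obstacle, and the place demanding care, is organising the double induction so that the flatness correction and the linearity (commutation with $\mathcal E^\infty$) correction do not interfere: correcting $s$ to improve flatness in general spoils its commutation with $\mathcal E$, and vice versa. The clean way, which I expect to follow, is to first run the induction to make $s$ commute with a fixed formal Euler-like $\mathcal E^\infty$ — an $H^1$-obstruction problem at each order, killed by Lemma \ref{lem:h1iszero} — so that afterwards $s$ takes values in fiberwise-linear vector fields only; once $s$ is linear, its flatness defect is a linear vector field valued cocycle, i.e. a cocycle with values in $A_L^{lin}$-sections, and its cohomology class is \emph{exactly} the obstruction to $z$ being compatible with brackets, which vanishes because $z$ was assumed to be a Lie algebroid morphism. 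In other words, linearity is achieved by a genuine formal recursion using Whitehead I, and flatness is then automatic from the hypothesis on $z$ together with the fact that a linear Levi connection is determined by a section $A_L^s\to A_L^{lin}$ (the remark following Theorem \ref{theo:EulerExists}). Verifying that the obstruction classes are well-defined cohomology classes in the relevant finite-dimensional flat modules — i.e. checking the cocycle identities degree by degree — is the routine but lengthy part I would defer to the detailed proof.
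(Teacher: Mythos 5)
Your skeleton — order-by-order corrections, only Whitehead Lemma I, and flatness deduced from linearity rather than killed by an $H^2$ class — is indeed the architecture of the paper's proof (Lemma \ref{lem:initial} plus the induction of Proposition \ref{prop:step}), but two of your key steps do not work as written. First, you invoke Theorem \ref{theo:EulerExists} (homogeneous generators of $\mathcal F$, a bracket-preserving linear lift of $A_L^{lin}$) after choosing an arbitrary Euler-like vector field $\mathcal E$; that theorem requires $\mathcal F$ to be \emph{preserved} by $\mathcal E$, which is precisely what is not available — producing an Euler-like field (even formal) compatible with $\mathcal F$ is part of what the theorem under proof must construct. The paper's initial step needs none of this: for \emph{any} Euler-like $\mathcal E$ and any Levi $\mathcal F$-connection $s$ lifting the Lie algebroid morphism $z$, the pair is automatically flat and linear modulo $I_L^2\mathfrak X^v$ (Lemma \ref{lem:initial}).

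Second, and more seriously, your ``clean way'' — fix $\mathcal E^\infty$ once and for all and correct only $s$ through the $H^1$-vanishing — fails on a membership issue: the degree-$k$ correction of $s$ that cancels the linearity defect is essentially the defect (or its $\nabla$-primitive) divided by $k-1$, and there is no reason for it to lie in $\widehat{\mathcal F}$, so after the correction $s$ would cease to be a Levi $\mathcal F$-connection; your earlier paragraph does allow correcting $\mathcal E$ as well, but justifies keeping corrections inside $\widehat{\mathcal F}$ only via the inapplicable Theorem \ref{theo:EulerExists}, so the two versions of your induction are inconsistent and neither closes. The paper resolves exactly this point by taking the cocycle $\xi\mapsto[s^k(\xi),\mathcal E^k]$ with values in the quotient module $V^k=I_L^k\mathfrak X^v/(\mathcal F\cap I_L^k\mathfrak X^v+I_L^{k+1}\mathfrak X^v)$: the primitive $\varepsilon^k$ given by Lemma \ref{lem:h1iszero} corrects the \emph{Euler-like field} ($\mathcal E^{k+1}=\mathcal E^k-\varepsilon^k$), absorbing the part of the defect transverse to $\mathcal F$, while the remaining part $\sigma^k(\xi)\in\mathcal F\cap I_L^k\mathfrak X^v$ — which does lie in $\mathcal F$ — is divided by $k-1$ and absorbed into $s^{k+1}$. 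Finally, the step you leave vague (flatness at order $k+1$) is obtained in the paper at each order, not at the formal limit: one brackets the curvature with $\mathcal E^{k+1}$ and applies the second item of Lemma \ref{lem:euler}; your alternative argument (``once $s$ is exactly linear, its curvature has zero linearization, hence vanishes because $z$ is a morphism'') is only usable after the whole recursion is finished and still needs the flatness of the intermediate pairs to make the modules $V^k$ flat $A_L^s$-modules, so it cannot replace the order-by-order argument inside the induction.
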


Here is an immediate consequence of this Theorem and Corollary \ref{cor:sectohol}.

\begin{corollary}
\label{thm:formalpi2}
Let $L$ be a locally closed leaf of the locally real analytic foliation $\mathcal F$. If $\pi_1(L)=\pi_2(L)=0$, then the conclusions of Theorem \ref{thm:formal} hold.
\end{corollary}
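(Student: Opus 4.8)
The plan is to prove Theorem \ref{thm:formal} by an iterative ``order-by-order'' construction along the filtration of $\widehat{\mathcal C}$ by powers of $\widehat I_L$, upgrading a linear Levi $\mathcal F$-connection (which exists by Theorem \ref{theo:EulerExists} once we have an Euler-like vector field, using the given section $z\colon A_L^s\to A_L^{lin}$) to a flat one by successively killing the curvature modulo higher and higher order terms. First I would fix an $\mathcal F$-neighbourhood $(U,\pi)$ and, shrinking if necessary, pick a $\pi$-vertical Euler-like vector field $\mathcal E$ preserving $\mathcal F$; by Lemma \ref{lem:Meirenken} this identifies $U$ with (a neighbourhood of the zero section in) the normal bundle $\nu$, and vector fields have well-defined homogeneous components with respect to $\mathcal E$. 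By Theorem \ref{theo:EulerExists}(3) there is a bracket-preserving $C^\infty(L)$-linear section $\Gamma(A_L^{lin})\to\mathcal F^{proj}$ landing in linear (degree-$1$) vector fields; precomposing with the given Lie algebroid section $z$ gives a \emph{linear} Levi $\mathcal F$-connection $s^{(1)}\colon\Gamma(A_L^s)\to\mathcal F^{proj}$ which automatically commutes with $\mathcal E$. However $s^{(1)}$ need not be flat: its curvature $c^{(1)}(\xi,\zeta)=[s^{(1)}(\xi),s^{(1)}(\zeta)]-s^{(1)}([\xi,\zeta])$ lies in the kernel of $\mathcal F^{proj}\to\Gamma(A_L^s)$, hence is valued in (sections represented by) vector fields vanishing along $L$ and mapping into $\mathfrak{rad}(\mathfrak g_L)$.

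The core of the argument is the inductive step: assume we have a formal $\mathcal E$-linear section $s$ whose curvature $c$ lies in $\widehat I_L^{\,k}\cdot(\text{radical-valued vertical vector fields})$, i.e. its lowest homogeneous component has degree $\ge k+1$. The leading part of $c$ descends to a well-defined $\Gamma(A_L^s)$-valued $2$-cochain on $A_L^s$ with values in a finite-dimensional flat $A_L^s$-module $E_k$ — namely the degree-$(k{+}1)$ homogeneous vertical vector fields valued in $\mathfrak{rad}(\mathfrak g_L)$, which is finite-dimensional over $C^\infty(L)$ because $A_L^s$-modules of vector fields of fixed homogeneity degree are sections of a finite-rank bundle, with the $A_L^s$-module structure coming from the adjoint action via $s$. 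The Bianchi identity (which holds because $s$ is a section of a Lie-Rinehart morphism) shows this cochain is a $2$-cocycle. Since $\pi_1(L)=0$ and $A_L^s$ has semi-simple isotropies, Whitehead Lemma I for Lie algebroids (Lemma \ref{lem:h1iszero}) is not quite what we need — we instead need that $H^2(A_L^s,E_k)=0$; this is exactly the point where $2$-connectedness would normally enter (Lemma \ref{lem:WhiteheadII}). The resolution used here, and the heart of the paper, must be that for the \emph{specific} modules $E_k$ arising — vertical vector fields of positive homogeneity degree — the relevant $H^2$ vanishes already under $\pi_1(L)=0$, because these modules have no $\mathfrak g_L$-invariants in low Chevalley–Eilenberg degree (semisimplicity plus the fact that $\mathfrak g_L$ acts on $E_k$ with no trivial subrepresentation when $k\ge 1$, which kills the $H^2(L,H^0_{CE})$ and $H^1(L,H^1_{CE})$ terms of the spectral sequence regardless of $\pi_2$), so the only surviving contribution is $H^0(L,H^2_{CE}(\mathfrak g_L,E_k))$, trivial by Whitehead II for Lie algebras. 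Granting this, $c$'s leading term is a coboundary: there is a $1$-cochain $\sigma$, which we may choose homogeneous of degree $k+1$ and radical-valued, with $d\sigma$ matching the leading term of $c$; replacing $s$ by $s+\sigma$ produces a new $\mathcal E$-linear section whose curvature now has lowest degree $\ge k+2$.

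Iterating this over all $k$ and taking the formal limit (the corrections $\sigma$ have strictly increasing homogeneity degree, so the sum $s^{(1)}+\sum_k\sigma_k$ converges in $\widehat{\mathcal C}$) yields a formal Levi $\mathcal F$-connection $s^\infty$ that is simultaneously $\mathcal E^\infty$-linear (each correction commutes with $\mathcal E$, being homogeneous) and flat (curvature vanishes to all orders, hence is zero in $\widehat{\mathcal F}^{proj}$). Taking $\mathcal E^\infty=\mathcal E$ (its formal jet) completes the construction. I expect the main obstacle to be precisely the cohomological input: justifying that $H^2(A_L^s,E_k)$ vanishes under only $\pi_1(L)=0$ for the modules $E_k$ of positive-degree vertical vector fields — one must argue carefully that $\mathfrak g_L$ acting on $\mathrm{Sym}^{\ge k}\nu^*\otimes\nu\otimes\mathfrak{rad}(\mathfrak g_L)$-type bundles has vanishing invariants and vanishing $H^1_{CE}$ in the appropriate range, so that the Mackenzie spectral sequence (Theorem 7.4.5 in \cite{MR2157566}) collapses the obstruction without needing $\pi_2(L)=0$. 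Secondary care is needed to check that the inductive corrections can always be chosen $\pi$-vertical and radical-valued so that the process stays inside the right function spaces, and that everything is compatible with the $C^\infty(L)$-linear (as opposed to merely $\mathbb R$-linear) structure.
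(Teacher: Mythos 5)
Your proposal does not follow the paper's route for this corollary, and as a reconstruction of Theorem \ref{thm:formal} it has two genuine gaps. First, note that the paper's proof of the corollary is a two-line deduction: $\pi_1(L)=\pi_2(L)=0$ produces the Lie algebroid section $z\colon A_L^s\to A_L^{lin}$ via Corollary \ref{cor:sectohol} (i.e. Proposition \ref{LeviMalcev}, which is where Whitehead Lemma II and hence $\pi_2(L)=0$ enter), and then Theorem \ref{thm:formal} applies verbatim. You instead take $z$ as ``given'' (it is not among the corollary's hypotheses; deriving it is exactly what $\pi_2(L)=0$ buys) and set out to re-prove Theorem \ref{thm:formal}. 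The first serious gap in that re-proof is your opening move: you ``pick a $\pi$-vertical Euler-like vector field $\mathcal E$ preserving $\mathcal F$'' so as to invoke Theorem \ref{theo:EulerExists}. No such vector field is available: the existence of a (formal) Euler-like vector field compatible with the foliation is part of the \emph{conclusion} of Theorem \ref{thm:formal}, and in the paper $\mathcal E^\infty$ is built order by order (Proposition \ref{prop:step}), each correction $\varepsilon^k$ being supplied by the vanishing of $H^1(A_L^s,V^k)$. If an $\mathcal E$ preserving $\mathcal F$ existed outright, Theorem \ref{theo:EulerExists}(3) composed with $z$ would already give a flat linear Levi $\mathcal F$-connection and there would be essentially nothing left to prove; assuming it begs the hard part of the theorem.

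The second gap is the cohomological mechanism of your inductive step. You kill the curvature using vanishing of $H^2(A_L^s,E_k)$ and speculate that this holds under $\pi_1(L)=0$ alone because the modules of positive-degree vertical fields have no $\mathfrak g_L$-invariants. That is false in general: for a transversally quadratic leaf one has $A_L^s=TL$ with trivial isotropy, the modules $E_k$ are honest flat bundles consisting entirely of invariants, and $H^2(A_L^s,E_k)\cong H^2(L,\cdot)$ need not vanish for a simply connected $L$ (e.g. $L=S^2$) --- yet Theorem \ref{theo:transvQuadr} still holds, precisely because the paper never needs $H^2$ at this stage. In Proposition \ref{prop:step} the only cohomological input is Whitehead Lemma I ($\pi_1(L)=0$): the \emph{defect of linearity} is a $1$-cocycle valued in $V^k=\tfrac{I^k_L\mathfrak X^v}{\mathcal F\cap I^k_L\mathfrak X^v+I^{k+1}_L\mathfrak X^v}$, its primitive corrects $\mathcal E^k$, the remaining ($\mathcal F$-valued) discrepancy corrects $s^k$, and flatness at the next order is then deduced \emph{for free} from linearity via the second item of Lemma \ref{lem:euler}. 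Under the corollary's hypotheses you could of course invoke Whitehead Lemma II for your $H^2$, but you would still have to ensure the correcting $1$-cochain $\sigma$ lands in $\widehat{\mathcal F}$ so that $s+\sigma$ remains a Levi $\mathcal F$-connection --- the point the paper's choice of the quotient module $V^k$ is engineered to handle, and which your proposal explicitly defers without resolving.
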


Theorem \ref{thm:formal} will be proved by induction. The initial case is given by Lemma \ref{lem:initial}. Proposition \ref{prop:step} gives the induction step.
Both result depend on the technical Lemma \ref{lem:euler}.
 
We denote by $ \mathfrak X^v$ vertical vector fields for $\pi \colon U \to L$ so that, for every $ k \in \mathbb N$, $ I_L^k\mathfrak X^v$ stands for vertical vector fields vanishing at order $k$ along~$L$.

\begin{lemma}\label{lem:euler}
Let  $\mathcal E$ be a $\pi$-vertical Euler-like vector field on $(U,\pi)$. For every $\pi$-vertical vector field $X \in \mathfrak X^v$ and avery $ k \geq 2$:
\begin{enumerate}
    \item  if $X \in  I_L^{k}  \mathfrak X^v$, then $ \tfrac{1}{k-1}[\mathcal E,X ] -  X \in  I_L^{k+1}  \mathfrak X^v$;
    \item  if the linearization of $X$ along $L$ is zero, and 
      $ [\mathcal E , X ] \in  I_L^{k}  \mathfrak X^v $, then $ X   \in  I_L^{k}  \mathfrak X^v$.
\end{enumerate}
\end{lemma}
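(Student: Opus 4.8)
The plan is to work in coordinates adapted to the Euler-like vector field and reduce the statement to an elementary computation on Taylor coefficients, exactly paralleling the role that $P^k$ and $Q^k$ played in Lemma \ref{lem:adminusk} and Proposition \ref{prop:homogeneous}. First I would invoke Lemma \ref{lem:Meirenken} to identify $(U,\pi)$ with the normal bundle $\nu\to L$ in such a way that $\mathcal E$ becomes the genuine Euler (fiberwise radial) vector field; then choose local coordinates $(y,x)$ on $U$, with $y$ coordinates on $L$ and $x=(x_1,\dots,x_r)$ linear fiber coordinates, so that $\mathcal E=\sum_i x_i\,\partial/\partial x_i$ and $I_L$ is the ideal generated by the $x_i$. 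A $\pi$-vertical vector field is then $X=\sum_i f_i(y,x)\,\partial/\partial x_i$, and the condition $X\in I_L^k\mathfrak X^v$ says precisely that each $f_i$ vanishes to order $k$ in $x$ (uniformly in $y$). The operator $\mathrm{ad}_{\mathcal E}$ acts on such an $X$ by $[\mathcal E,X]=\sum_i\big(\mathcal E(f_i)-f_i\big)\partial/\partial x_i=\sum_i\big(\sum_j x_j\partial f_i/\partial x_j - f_i\big)\partial/\partial x_i$, i.e. on a monomial $x^\alpha$ of $x$-degree $|\alpha|=m$ it acts as multiplication by $(m-1)$.

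For item (1): if $X\in I_L^k\mathfrak X^v$ with $k\geq 2$, write the Taylor expansion of each $f_i$ in $x$ as $f_i=\sum_{m\geq k} f_i^{(m)}$ with $f_i^{(m)}$ homogeneous of degree $m$ in $x$. Then $\tfrac{1}{k-1}[\mathcal E,X]-X$ has $i$-th component $\sum_{m\geq k}\big(\tfrac{m-1}{k-1}-1\big)f_i^{(m)}=\sum_{m\geq k}\tfrac{m-k}{k-1}f_i^{(m)}$, and the $m=k$ term vanishes, so every component lies in $I_L^{k+1}$; hence $\tfrac{1}{k-1}[\mathcal E,X]-X\in I_L^{k+1}\mathfrak X^v$. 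Note $k\geq 2$ is used so that $k-1\neq 0$. For item (2): the hypothesis that the linearization of $X$ along $L$ is zero means the degree-$1$ part $f_i^{(1)}$ vanishes, so $X\in I_L^2\mathfrak X^v$ and $f_i=\sum_{m\geq 2}f_i^{(m)}$; then $[\mathcal E,X]$ has $i$-th component $\sum_{m\geq 2}(m-1)f_i^{(m)}$, and since the coefficient $(m-1)$ is a nonzero invertible scalar for every $m\geq 2$, the condition $[\mathcal E,X]\in I_L^k\mathfrak X^v$ forces $f_i^{(m)}=0$ for all $2\leq m<k$, whence $f_i$ vanishes to order $k$ in $x$ and $X\in I_L^k\mathfrak X^v$. (Here the vanishing of the linearization is essential, since the degree-$1$ part sits in the kernel of $\mathrm{ad}_{\mathcal E}$ and would otherwise obstruct the conclusion.)

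The only genuinely delicate point is checking that the coordinate description of $I_L^k\mathfrak X^v$ as "each coefficient function vanishes to order $k$ in the fiber variables" is correct and coordinate-independent — i.e. that passing to the normal-bundle picture of Lemma \ref{lem:Meirenken} really does put $\mathcal E$ and the filtration $I_L^k\mathfrak X^v$ into the stated normal form simultaneously. This is where I would be careful: the identification $U\cong\nu$ is precisely the one for which $\mathcal E$ is linear, and under it $I_L$ is the fiberwise-vanishing ideal, so $I_L^k\mathfrak X^v$ is unambiguously the vector fields whose coefficients in any fiberwise-linear coordinate system vanish to order $k$ along the zero section; this is standard but worth a sentence. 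Everything else is the bookkeeping above, which is purely formal and identical in the formal (jet) setting since it only involves finitely many Taylor coefficients in $x$ at a time. Thus the lemma follows; no further input is needed beyond Lemma \ref{lem:Meirenken} and the linear action of $\mathrm{ad}_{\mathcal E}$ on homogeneous components.
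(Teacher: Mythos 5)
Your proof is correct and follows essentially the same route as the paper's: adapted coordinates in which $\mathcal E=\sum_i x_i\tfrac{\partial}{\partial x_i}$ and $I_L=(x_1,\dots,x_s)$, Taylor expansion of the vertical components in the fiber variables, and the observation that $\mathrm{ad}_{\mathcal E}$ multiplies a degree-$m$ homogeneous component by $(m-1)$. The only cosmetic difference is that you pass through the global identification of Lemma \ref{lem:Meirenken}, whereas the paper simply checks both items in local adapted coordinates, which suffices.
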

\begin{proof}
It suffices to check both items in local adapted coordinates $(x,y) $ where $y = (y_1, \dots, y_d)$ are local coordinates on $L$ and $ x= (x_1, \dots, x_s)$ are local coordinates on the fibers of $\pi :  (x,y) \mapsto y  $ such that
 $ \mathcal E = \sum_{i=1}^s x_i \tfrac{\partial}{\partial x_i} $. 
 Since  the ideal $ I_L $ is generated by $ x_1, \ldots, x_s $, the Taylor expansion implies that for every  $X \in  I_L^{k}  \mathfrak X^v$ there exists functions $ f_{i;i_1, \dots , i_s }(y) $ such that
  $$ X   =  \sum_{i=1}^s \sum_{i_1+\dots+i_s =k  } f_{i;i_1, \dots , i_s }(y) x_1^{i_1} \dots x_s^{i_s}     \tfrac{\partial}{\partial x_i}  \,  \hbox{~~~~mod~~} \,   I_L^{k+1} \mathfrak X^v.$$
 Since  $[\mathcal E , I_L^{k+1} \mathfrak X^v ] \subset  I_L^{k+1} \mathfrak X^v $, the first item follows from the easily checked identity:
  \begin{align}
      \label{eq:Taylor} [\mathcal E ,x_1^{i_1} \dots x_s^{i_s}     \tfrac{\partial}{\partial x_i}  ] \, =  \, (k-1) \, x_1^{i_1} \dots x_s^{i_s}     \tfrac{\partial}{\partial x_i}   .
  \end{align}

Let us prove  the second item. By assumption, the linearization of $X \in \mathfrak X^v$ along $L$ is zero, so that $ X \in I_L^2 \mathfrak X^v$. 
The conclusion then follows from considering the Taylor expansion of $X$, in view of Equation \eqref{eq:Taylor}.
\end{proof}

A pair $ (s,\mathcal E) $ with $s$ a Levi $\mathcal F $-connection and $ \mathcal E$ a $\pi$-vertical Euler-like vector field are said to be \emph{flat and linear up to order $k$} if:
\begin{itemize}
\item $[s(\xi), \mathcal E]=0 \mod  I^k_L\mathfrak X^v$ for all $\xi\in \Gamma(A_L^s)$,
\item  $[s (\xi), s(\zeta)]=s([\xi,\zeta]) \mod  I^k_L\mathfrak X^v$ for all $\xi,\zeta \in \Gamma(A_L^s)$.
\end{itemize}

\begin{lemma}\label{lem:initial} For every Euler-like vector field $\mathcal E$ and every Levi $\mathcal F$-connection $s$:
\begin{itemize}
    \item $[\mathcal E,s(\xi)]=0\mod I^2\mathfrak{X}^v $,
    \item If, in addition, $s$ projects to a Lie algebroid morphism $z \colon A_L^s\to A_L^{lin}$, then  $[s (\xi), s(\zeta)]=s([\xi,\zeta]) \mod  I^2_L\mathfrak X^v$ for all $\xi,\zeta \in \Gamma(A_L^s)$.
\end{itemize}

\end{lemma}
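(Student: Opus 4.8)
The plan is to verify both claims by working modulo $I_L^2\mathfrak X^v$, where everything reduces to the linearization along $L$. First, recall that a Levi $\mathcal F$-connection $s\colon\Gamma(A_L^s)\to\mathcal F^{proj}$ is, by construction, a lift of the surjection $\mathcal F^{proj}\to\Gamma(A_L^s)$, and that the composite $\mathcal F^{proj}\to\Gamma(A_L)\to\Gamma(A_L^{lin})\to\Gamma(A_L^s)$ factors through the linearization along $L$. So the image $s(\xi)$ of any $\xi$, when reduced modulo $I_L^2\mathfrak X^v$, i.e. when linearized along $L$, is exactly the fiberwise-linear vector field on $\nu$ representing the image of $\xi$ under the (a priori only set-theoretic) section; if $s$ projects to the Lie algebroid morphism $z\colon A_L^s\to A_L^{lin}$, this linearization is $z(\xi)$ viewed inside $CDO(\nu)$.

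For the first bullet, I would argue as follows. The vector field $\mathcal E$ is Euler-like, hence $\pi$-vertical, vanishes along $L$, and its linearization along $L$ is the Euler vector field $\mathcal E^{lin}$ on $\nu$. The linearization map $\mathfrak X^{proj}\to CDO(\nu)$ (vector fields modulo $I_L^2\mathfrak X^v$ after projecting away the $TL$ part, more precisely the induced map on $\pi$-vertical fields modulo $I_L^2\mathfrak X^v$) is a Lie algebra homomorphism, so $[\mathcal E,s(\xi)]$ linearizes to $[\mathcal E^{lin},\,\mathrm{lin}(s(\xi))]$. Now $\mathrm{lin}(s(\xi))$ is a \emph{linear} vector field on $\nu$ (it represents an element of $A_L^{lin}\subset CDO(\nu)$, which consists of fiberwise-linear vector fields), and linear vector fields are precisely those commuting with the Euler vector field $\mathcal E^{lin}$. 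Hence $[\mathcal E^{lin},\mathrm{lin}(s(\xi))]=0$, which is exactly $[\mathcal E,s(\xi)]=0\mod I_L^2\mathfrak X^v$. (One should note this part uses nothing about $z$: the image of $s$ always lies in $\mathcal F^{proj}$ and its linearization always lands in $A_L^{lin}$, since $A_L^{lin}$ is by definition the image of $A_L$ in $CDO(\nu)$.)

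For the second bullet, I would again apply the linearization homomorphism. Since it is a Lie algebra morphism, $[s(\xi),s(\zeta)]-s([\xi,\zeta])$ linearizes to $[\mathrm{lin}(s(\xi)),\mathrm{lin}(s(\zeta))]-\mathrm{lin}(s([\xi,\zeta]))$. Under the hypothesis that $s$ projects to the Lie algebroid morphism $z$, we have $\mathrm{lin}(s(\eta))=z(\eta)$ for all $\eta\in\Gamma(A_L^s)$ (as elements of $\Gamma(A_L^{lin})$, realized as linear vector fields on $\nu$). Therefore the expression becomes $[z(\xi),z(\zeta)]-z([\xi,\zeta])$, which vanishes because $z$ is a Lie algebroid morphism. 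So $[s(\xi),s(\zeta)]-s([\xi,\zeta])$ has zero linearization along $L$, i.e. lies in $I_L^2\mathfrak X^v$ — noting that this difference is automatically $\pi$-vertical since all three terms project to the same vector field $[\rho\xi,\rho\zeta]=\rho[\xi,\zeta]$ on $L$ under $\pi_*$ (the anchor), so its image in $\mathfrak X(L)$ is zero, placing it in $\mathcal F^v$.

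The main obstacle is bookkeeping rather than conceptual: one must set up cleanly the identification ``$\pi$-vertical vector fields modulo $I_L^2\mathfrak X^v$'' with (the part of) $CDO(\nu)$ acting fiberwise-linearly, check that the bracket descends there and that the linearization of $\mathcal E$ is the genuine Euler vector field, and confirm that the relevant differences are $\pi$-vertical so that ``$\mod I_L^2\mathfrak X^v$'' is the correct statement (not merely ``$\mod I_L^2\mathfrak X(M)$ plus $TL$-terms''). All of this is standard given Lemma \ref{lem:filtration}, the discussion of the $A_L$-action on $\nu$ via $CDO(\nu)$, and the characterization of linear vector fields as those commuting with $\mathcal E$; I would phrase the proof so as to reduce each bullet to ``apply the linearization homomorphism and use that $z$ is a morphism (resp. that linear fields commute with Euler)''.
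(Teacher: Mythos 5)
Your argument is correct and in substance the same as the paper's proof: both reduce modulo $I_L^2\mathfrak X^v$ via the linearization along $L$, use that the linear (affine) leading part commutes with the Euler-like field, and for the second bullet use that the curvature of $s$ lies in the kernel of $\mathcal F^{proj}\to\Gamma(A_L^{lin})$, which is contained in $I_L^2\mathfrak X^v$ by definition of $A_L^{lin}$. The only cosmetic difference is that the paper performs the first bullet by a Taylor expansion in adapted coordinates rather than through $CDO(\nu)$, and the verticality and order-counting bookkeeping you flag is exactly what its Lemma \ref{lem:euler} supplies.
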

\begin{proof}
Since $  \mathcal E$  is an Euler-like vector field, a neghborhood $U$ of $L$ can be covered by local charts, equipped with coordinates $ (x_1, \dots, x_s,y_1, \dots, y_d)$, such that:
\begin{enumerate}
    \item  $L$ is locally given by $x=0$, and $p$ is locally given by $ (x,y) \mapsto y$,
    \item  $\mathcal E  $ coincides with the vector field $\sum_{i=1}^s x_i\tfrac{\partial}{\partial x_i}.$
\end{enumerate}
The Taylor expansion of any $p$-projectable vector field $X$ is given by 
$$X= \sum_{i=0}^d f_i(y) \tfrac{\partial}{\partial y_i}  + \sum_{i,j=1}^s  a_{ij}(y)x_i\tfrac{\partial}{\partial x_j}+I_L^2\mathfrak X^v.$$
A simple calculation gives $ [\mathcal E,X] \in I_L^2 \mathfrak X^v$, as $\sum_{i=0}^d f_i(y) \tfrac{\partial}{\partial y_j}  + \sum_{i,j=1}^s  a_{ij}(y)x_i\tfrac{\partial}{\partial x_j}$ commutes with $\mathcal E$ and $I_L^2\mathfrak X^v$ is preserved by $\mathcal E$ by Lemma \ref{lem:euler}. Since $ s(\xi)$ is $p$-projectable for all $ \xi \in \Gamma(A_L^s)$, the  first assertion follows.  The second assertion is a consequence of the following facts:
\begin{enumerate}
    \item the assumption on $s$ means that for all $\xi, \zeta \in \Gamma(A_L^s)  $, the  $\pi$-vertical vector field $[s(\xi),s(\zeta)]-s([\xi,\zeta])$ is contained in the kernel of $\mathcal F^{proj}\to \Gamma(A_L^{lin})$,
    \item the kernel of $\mathcal F^{proj}\to \Gamma(A_L^{lin})$ is contained in $I_L^2\mathfrak X^v = \mathfrak X^v\cap I^2_L\mathfrak X^{proj}$ by definition of $A_L^{lin}$.
\end{enumerate}
This completes the proof of the second argument.
\end{proof}

\begin{proposition}\label{prop:step}Let $(s^k,\mathcal E^k) $ be a Levi $\mathcal F$-connection and an Euler-like vector field respectively, which are flat and linear up to order $k$,  with $k \geq 2$.
Then there exists  $(s^{k+1}, \mathcal E^{k+1}) $,   a Levi $\mathcal F$-connection and an Euler-like vector field respectively, such that 
\begin{itemize}
\item the pair $(s^{k+1}, \mathcal E^{k+1}) $ is flat and linear up to order $k+1$,
\item the vector fields $ \mathcal E^k$ and $  \mathcal E^{k+1}$ coincide up to order $k$,
\item the vector fields  $s^{k}(\xi)$ and $s^{k+1}(\xi)$ coincide up to order $k$ for all $\xi \in \Gamma(A_L^s)$.
\end{itemize}
\end{proposition}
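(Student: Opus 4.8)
The plan is to run a two-step correction argument: first adjust the Euler-like vector field to kill the order-$k$ obstruction to linearity of $\mathcal E^k$, then adjust the Levi connection $s^k$ to kill the order-$k$ obstruction to flatness, being careful that the second adjustment does not destroy what the first one achieved. Throughout I work modulo $I_L^{k+1}\mathfrak X^v$, and the key algebraic engine is Lemma \ref{lem:euler}: its first item says that on order-$k$ vertical vector fields the operator $X\mapsto \tfrac{1}{k-1}[\mathcal E,X]-X$ lands in the next filtration step, so that $\tfrac{1}{k-1}[\mathcal E,\cdot]$ acts as the identity on the subquotient $\tfrac{I_L^k\mathfrak X^v}{I_L^{k+1}\mathfrak X^v}$; its second item gives the injectivity needed to conclude that a corrected object that is flat/linear modulo $I_L^{k+1}$ really is so.

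For the Euler vector field: set $e^k := [\mathcal E^k, \mathcal E^k]$... that is zero, so instead measure linearity through the connection. The obstruction to $\mathcal E^k$ being Euler-like to higher order is internal: write $\mathcal E^k = \mathcal E^{k}_{\mathrm{lin}} + R_k$ where $R_k\in I_L^2\mathfrak X^v$ and consider the order-$k$ homogeneous component $R_k^{(k)}$ of $R_k$ (using a local adapted chart as in Lemma \ref{lem:euler}, then gluing via a partition of unity on $L$, which is harmless since the ambiguity lives in $I_L^{k+1}\mathfrak X^v$). I would like $\mathcal E^{k+1}:=\mathcal E^k - R_k^{(k)}$, but the honest move is to use the homotopy operator $Q^{k}$ of Lemma \ref{lem:adminusk} / Theorem \ref{theo:EulerExists}: since $\mathcal F$ has homogeneous generators, $R_k^{(k)}$ can be chosen in $\mathcal F$, and subtracting it produces a new $\pi$-vertical vector field whose linearization is still the Euler vector field on $\nu$ (we only changed it at order $k\ge 2$) and which is therefore again Euler-like after possibly shrinking $U$ for completeness; moreover $[\mathcal E^{k+1},s^k(\xi)]$ picks up a correction $-[R_k^{(k)},s^k(\xi)]$ which, because $s^k(\xi)$ is projectable hence homogeneous of degree $1$ modulo $I_L^2$, lies in $I_L^{k}\mathfrak X^v$ — one then checks it cancels the order-$k$ part of $[\mathcal E^k,s^k(\xi)]$ by the homogeneity bookkeeping of \eqref{eq:Taylor}. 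This is the delicate point and I expect the bulk of the verification to be here.

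For the Levi connection: once $\mathcal E^{k+1}$ is linear with $s^k$ up to order $k+1$, define the order-$k$ curvature defect $c^k(\xi,\zeta):=\big([s^k(\xi),s^k(\zeta)]-s^k([\xi,\zeta])\big)^{(k)}\in \tfrac{I_L^k\mathfrak X^v}{I_L^{k+1}\mathfrak X^v}$. The subquotient $E_k:=\tfrac{I_L^k\mathfrak X^v}{I_L^{k+1}\mathfrak X^v}$ is a flat finite-dimensional $A_L^s$-module (the action being $\xi\cdot \bar b = \overline{[s^k(\xi),b]}$, well-defined since $s^k$ is flat modulo $I_L^{k+1}$ and $[I_L\mathfrak X^{proj},I_L^k\mathfrak X^v]\subset I_L^k\mathfrak X^v$), and the Jacobi identity for $\mathcal F^{proj}$ shows $c^k$ is a Lie algebroid $2$-cocycle of $A_L^s$ with values in $E_k$. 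Since $\pi_1(L)=\pi_2(L)=0$ — here I invoke Corollary \ref{cor:sectohol}'s hypothesis chain, but actually Theorem \ref{thm:formal} only assumes $\pi_1(L)=0$ plus existence of $z$; so I must instead argue that $E_k$, being filtered with abelian subquotients coming from $\mathrm{rad}$, lets us reduce to the $\pi_1(L)=0$ Whitehead Lemma \ref{lem:h1iszero} — wait, a $2$-cocycle needs $H^2$, so in fact the cleaner route is: the relevant obstruction is really a \emph{coboundary} computation that only sees $H^1$ because we are correcting a section against a fixed $z$, i.e. the defect is a $1$-cochain whose coboundary is controlled, and Lemma \ref{lem:h1iszero} suffices under $\pi_1(L)=0$. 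Granting the appropriate vanishing, pick $\sigma^k:\Gamma(A_L^s)\to E_k$ with $\partial\sigma^k = c^k$, lift it (via the homogeneous generators, Theorem \ref{theo:EulerExists}) to a map into degree-$k$ vertical vector fields in $\mathcal F$ that moreover commute with $\mathcal E^{k+1}$ — this is possible precisely because $Q^{k}$ preserves $\mathcal F$ and produces the homogeneous-degree-$k$, $\mathcal E$-linear representative — and set $s^{k+1}:=s^k+\sigma^k$. Then $s^{k+1}$ agrees with $s^k$ up to order $k$, is still linear with $\mathcal E^{k+1}$ up to order $k+1$ (the correction commutes with $\mathcal E^{k+1}$), and its curvature defect vanishes modulo $I_L^{k+1}$ by the coboundary equation together with Lemma \ref{lem:euler}(2) to promote the modular statement to the genuine one. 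Finally I check $s^{k+1}$ is still a genuine Levi $\mathcal F$-connection (it projects to the same $z$ since $\sigma^k$ is $I_L^2\mathfrak X^v$-valued) and relabel $\mathcal E^{k+1}, s^{k+1}$; after shrinking $U$ once for completeness of $\mathcal E^{k+1}$ the proposition follows. The main obstacle, as flagged, is the simultaneous bookkeeping of the two corrections — ensuring the connection-correction step does not spoil the linearity achieved in the Euler-correction step — and pinning down exactly which cohomology group the flatness obstruction lives in so that $\pi_1(L)=0$ alone (plus the hypothesis that $z$ exists) is enough.
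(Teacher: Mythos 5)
The heart of the induction is the correction of the Euler-like vector field, and your argument for that step does not work. Subtracting ``the degree-$k$ homogeneous component $R_k^{(k)}$ of $\mathcal E^k$'' is a chartwise normalization that never confronts the actual obstruction, which is the class of the defect map $\xi\mapsto [s^k(\xi),\mathcal E^k]$ in the quotient module $V^k=I_L^k\mathfrak X^v/\bigl(\mathcal F\cap I_L^k\mathfrak X^v+I_L^{k+1}\mathfrak X^v\bigr)$: this defect is a Lie algebroid $1$-cocycle of $A_L^s$ valued in $V^k$ (one gets the cocycle identity by bracketing the curvature relation with $\mathcal E^k$), and the corrected field $\mathcal E^{k+1}=\mathcal E^k-\varepsilon^k$ exists precisely because $H^1(A_L^s,V^k)=0$, i.e.\ by Whitehead Lemma \ref{lem:h1iszero}, which is where $\pi_1(L)=0$ and the semi-simplicity of the isotropy of $A_L^s$ enter. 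Your Euler-correction step uses neither hypothesis, so it would ``prove'' the statement in situations where it fails (compare the non-simply-connected examples in the paper); there is no reason why $[s^k(\xi),\mathcal E^k-R_k^{(k)}]$ should land in $\mathcal F\cap I_L^k\mathfrak X^v+I_L^{k+1}\mathfrak X^v$. Invoking Theorem \ref{theo:EulerExists} to place $R_k^{(k)}$ in $\mathcal F$ is moreover circular, since that theorem presupposes an Euler-like vector field preserving $\mathcal F$ — exactly what the induction is trying to produce — and in any case the part of the defect lying in $\mathcal F\cap I_L^k\mathfrak X^v$ is not meant to be removed from $\mathcal E$: it is the residue $\sigma^k(\xi)$ that gets absorbed into the connection via the explicit, non-cohomological correction $s^{k+1}=s^k-\sigma^k/(k-1)$, whose effect is controlled by the first item of Lemma \ref{lem:euler}.

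For flatness you run into the difficulty yourself: the curvature is a $2$-cochain, killing it cohomologically would need $H^2(A_L^s,\cdot)=0$, i.e.\ $\pi_2(L)=0$, which is not assumed, and ``granting the appropriate vanishing'' is exactly the missing argument. The paper resolves this without any cohomology: once linearity holds to order $k+1$, flatness to order $k+1$ is automatic. Indeed $c^{k+1}=c^k=0 \bmod I_L^k\mathfrak X^v$, and the Jacobi identity combined with linearity at order $k+1$ shows $[c^{k+1}(\xi,\zeta),\mathcal E^{k+1}]\in I_L^{k+1}\mathfrak X^v$; since $c^{k+1}$ has vanishing linearization along $L$, the second item of Lemma \ref{lem:euler} then forces $c^{k+1}\in I_L^{k+1}\mathfrak X^v$. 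This mechanism — flatness as a consequence of linearity, via Lemma \ref{lem:euler}(2) — is the key idea your proposal lacks; with it, the only cohomological input in the whole induction is the $H^1$ vanishing used to correct $\mathcal E$, which is exactly why $\pi_1(L)=0$ together with the section $z$ (used only in the initial step, Lemma \ref{lem:initial}) suffices.
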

\begin{proof} By assumption, the pair $(s^k,\mathcal E^k)$ satisfies two conditions: ``Linearity up to order $k$'' and ``Flatness up to order $k$'', i.e. for all $\xi,\zeta \in \Gamma(A_L^s)$
\begin{align}
[s^k(\xi), \mathcal E^k]=0 \mod  I^k_L\mathfrak X^v \tag{Lin\textsuperscript{k}}\label{link}\\
[s^k (\xi), s^k(\zeta)]=s^k([\xi,\zeta]) \mod  I^k_L\mathfrak X^v \tag{Flat\textsuperscript{k}} \label{flatk}.    
\end{align}

We will prove the proposition in three steps:\\
{\bf Step 1.} We construct a class that measures the failure of \eqref{link} to hold at order $k+1$. 
\begin{itemize}
        \item The $C^\infty(L)$ module $\frac{I^k_L\mathfrak X^v}{\mathcal F \cap I^k_L\mathfrak X^v + I^{k+1}\mathfrak X^v}$ is projective, i.e. isomorphic to the section space $\Gamma(V^k)$ of a vector bundle $V^k\to L$. Indeed, the quotient $\frac{I^k_L\mathfrak X^v}{I^{k+1}_L\mathfrak X^v}$ is given by sections of some vector bundle over $L$: It is a direct consequence of the splitting Lemma \ref{lem:splitting}, that $V^k$ is a quotient of that bundle.
        \item A Lie algebroid action of $A_L^s$ on $V^k$ is defined by $\nabla_\xi (\overline{\sigma}):= \overline{[s^k(\xi), \sigma]}$, with $\sigma\in {I^k_L\mathfrak X^v}$. The action is well-defined, because $[s^k(\xi),\cdot ]$ preserves $\mathfrak X^v$,  $I_L$ and $\mathcal F$, hence it preserves the numerator and denominator of $\frac{I^k_L\mathfrak X^v}{\mathcal F \cap I^k_L\mathfrak X^v + I^{k+1}_L\mathfrak X^v}$. As $k\geq 2$, Equation \eqref{flatk} implies that the action $\nabla $ is flat.
        \item By \eqref{link}, there is a well-defined $C^\infty(L)$-linear map $def:\Gamma(A_L^s)\to \Gamma(V^k)$ given by $\xi\mapsto \overline{[s^k(\xi), \mathcal E^k]}$ describing the defect of linearity up to order $k+1$. The curvature 
\begin{align}
\label{curvatureC}
        c^k(\xi,\zeta)=[s^k (\xi), s^k(\zeta)]-s^k([\xi,\zeta])
\end{align}
        is valued in $\mathcal F$ and in $I_L^k\mathfrak X^v$, by assumption \eqref{flatk}. The first item in Lemma \ref{lem:euler} implies that applying $[\cdot ,\mathcal E^k]$ to a vector field in $\mathcal F\cap I_L^k\mathfrak X^v$ yields an element of $\mathcal F \cap I^k_L\mathfrak X^v + I^{k+1}\mathfrak X^v$: Therefore upon applying $[\cdot,\mathcal E^k]$ to Equation \eqref{curvatureC}, we obtain 
        \[
       [s^k (\xi), [s^k(\zeta),\mathcal E^k]]- [s^k(\zeta), [s^k (\xi),\mathcal E^k]]= [s^k([\xi,\zeta]),\mathcal E^k] \mod \mathcal F \cap I^k_L\mathfrak X^v + I^{k+1}\mathfrak X^v,
        \]
    which is exactly the cocycle condition: $\nabla_\xi def (\zeta) - \nabla_\zeta def (\xi) - def([\zeta,\xi]) $.
    \end{itemize}  
{\bf Step 2.}   We construct $(\mathcal E^{k+1},s^{k+1})$ satisfying (\ref{link}\textsuperscript{\color{red}+1}).
\begin{itemize}
    \item Since $L$ is simply connected, Lemma \ref{lem:h1iszero} implies that the class $[def]\in H^1(A_L^s,V^k)$ is zero. Choose $\varepsilon^k\in I_L^k\mathfrak X^v$, such that $\overline{\varepsilon^k}$ is a primitive of $def$:
    \[ 
    { def(\xi) = \nabla_\xi \epsilon^{k} \hbox{ i.e. } [s^k(\xi), \mathcal E^k]}=[s^k(\xi),\varepsilon^k] \mod  \mathcal F \cap I^k_L\mathfrak X^v + I^{k+1}_L\mathfrak X^v .
    \] We define the new Euler-like vector field by $\mathcal E^{k+1}=\mathcal E^k-\varepsilon^k$.
    \item By construction of $\mathcal E^{k+1}$, for every given $\xi\in \Gamma(A_L^s)$, there exists a vector field $\sigma^k(\xi)$ in $\mathcal F\cap I_L^k\mathfrak X^v$, such that
    \[
        {[s^k(\xi), \mathcal E^{k+1}]}=\sigma^k(\xi) \mod  I^{k+1}_L\mathfrak X^v.
    \]
    Using local trivializations and partitions of unity on $L$, the map $\xi\mapsto \sigma^k(\xi)$ can be achieved to be $C^\infty(L)$-linear. We now define $s^{k+1}=s^k - \tfrac{\sigma^k}{k-1}$. By construction, $s^{k+1}$ is still a Levi $\mathcal F$-connection.
    \item Let us verify (\ref{link}\textsuperscript{\color{red}+1}):
    \[
    [s^{k+1}(\xi),\mathcal E^{k+1}]=   [s^k(\xi) - \tfrac{\sigma^k(\xi)}{k-1},\mathcal E^{k+1}]=\sigma^k(\xi)-\frac{1}{k-1}[\sigma^k(\xi),\mathcal E^{k+1}]=0 \mod I_L^{k+1}\mathfrak X^v.
    \]
    The last equality holds by the first item of Lemma \ref{lem:euler}.
    \end{itemize}
 {\bf Step 3.}  Consider the curvature \[c^{k+1}(\xi,\zeta)=[s^{k+1} (\xi), s^{k+1}(\zeta)]-s^{k+1}([\xi,\zeta]).\]We show that  (\ref{link}\textsuperscript{\color{red}+1}) implies  (\ref{flatk}\textsuperscript{\color{red}+1}), i.e. $c^{k+1}=0$ modulo $I_L^{k+1}\mathfrak X^v$.
 \begin{itemize}
     \item  Since $s^{k+1}=s^k$ modulo $I_L^k\mathfrak X^v$, we know that
     \[
     c^{k+1}=c^k=0 \mod I_L^k\mathfrak X^v.
     \]
     \item In view of (\ref{link}\textsuperscript{\color{red}+1}), all underbraced terms in the following expression are in $I_L^{k+1}\mathfrak X^v$.
     \begin{align*}
         &[c^{k+1}(\xi,\zeta),\mathcal E^{k+1}] 
         &=[s^{k+1} (\xi), \underbrace{[s^{k+1}(\zeta),\mathcal E^{k+1}]}]- [s^{k+1}(\zeta), \underbrace{[s^{k+1} (\xi),\mathcal E^{k+1}]}]- \underbrace{[s^{k+1}([\xi,\zeta]),\mathcal E^{k+1}] }.
     \end{align*}
    Since $s^{k+1}(\xi)$ and $s^{k+1}(\zeta)$ are projectable vector fields tangent to $L$, their bracket with $ I_L^k\mathfrak X^v$ takes values in  $I_L^k\mathfrak X^v$.  Hence, $[c^{k+1},\mathcal E^{k+1}]=0\mod I_L^{k+1}\mathfrak X^v$. 
     \item The second item of Lemma \ref{lem:euler}, implies that $c^{k+1}=0\mod I_L^{k+1}\mathfrak X^v$, i.e.  (\ref{flatk}\textsuperscript{\color{red}+1}) holds.
 \end{itemize}
    
 This completes the proof.
\end{proof}

\subsection{Examples and counter-examples}

Let us give counter examples to naive generalizations of Theorem \ref{thm:formal}. Let us explore the non-simply connected case.

\begin{example}
For $L$ a leaf in a regular foliation, we have $A_L = A_L^{lin}=A_L^s =TL $, and every tubular neighbourhood $(U,\pi) $ induces a unique flat Levi $ \mathcal F$-connection: it suffices to lift a vector field in $L$ to the unique $ \pi$-projectable vector field in $ \mathcal F$. However, the transverse formal Euler-like vector field can only exist if the holonomy  $\Phi(\gamma) $ is a formally linearizable diffeomorphism of the transversal for all $\gamma \in \pi_1(L) $. The regular foliations (with dimension $1$ leaves) obtained by  suspension of diffeomorphism $ \phi\colon \mathbb R^n \to \mathbb R^n$ fixing $0$ are instances of such foliations with $L \simeq S^1$ if $ \phi$ is not formally linearizable at zero (e.g. $n=2$ and $(x,y) \mapsto (x,y+x^2) $). 
\end{example}

\begin{example}\label{ex:snake}
Consider the ``self-eating snake'' singular foliation, as in Figure 1, realized as follows. Let $ \mathcal S$ be the ``foliation by concentric circles'', i.e. the singular foliation on $ \mathbb R^2$ of all vector fields $X$ such that $X[ \phi]=0$, with $\phi = \sum_{i=1}^2 x_i^2$. Then consider the direct product singular foliations on $ \mathbb R^2 \times \mathbb R$ given by
 $ \mathcal F  := \mathcal S \times \mathfrak X(\mathbb R) $.
This foliation goes to the quotient through the equivalence relation
 $ (x,t) \sim ( \tfrac{1}{2}x, t+1) $, for all $(x,t) \in \mathbb R^n \times  \mathbb R$. The only singular leaf of the quotient singular foliation is $ L=S^1$. 

In this case, the normal bundle $\nu$ is trivial as a vector bundle, a flat Levi $ \mathcal F$-connection exists (which is also a flat Ehresmann-connection since $A_L^s = TL $), but there is no Ehresmann $ \mathcal F$-connection that makes the normal bundle isomorphic to the trivial one (i.e. the first return map on $\nu $ induced by any Ehresmann $ \mathcal F$-connection is non-trivial).

\begin{figure}[H]\label{fig:snake}
\begin{center}
	\includegraphics[scale=0.5]{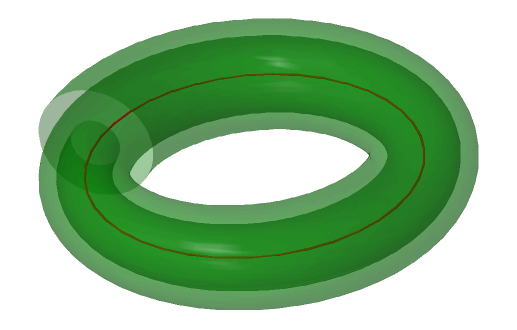}
	\caption{Example \ref{ex:snake}}
\end{center}
\end{figure}

\end{example}

 Here is an example of a leaf  for which a section $A_L^s \to A_L^{lin} $ exists (because it is transversally quadratic and  $A_L^s = A_L^{lin} $) but does not admit a flat Levi $\mathcal F $-connection.

\begin{example}\label{ex:transquad}
Let $L$ be a manifold, $ \alpha, \beta \in \Omega^1 (L)$ closed $1$-forms such that the class of $\alpha \wedge \beta   $ in $ H^2(L)$ is not trivial. On $M := L \times \mathbb R$, we consider for all $a,b,c \in \mathbb N$ with $ 2 \leq a < b $ and  $c = a+b -1 $ the $C^\infty(M) $-submodule  ${\mathcal F} \subset \mathfrak X(M) $ generated by the vector fields: 
  $$  t^c \tfrac{\partial}{\partial t} \hbox{ and } \psi(u) :=  u + \alpha(u)t^a \tfrac{\partial}{\partial t}  +  \beta(u)t^b \tfrac{\partial}{\partial t} \hbox{ with $u \in \mathfrak X(L) $ } .$$
 A direct computation shows that:
 \begin{align*}
     &[t^c \tfrac{\partial}{\partial t} , \psi (u) ] =  \left((c-a) \alpha(u)t^{a-1} + (c-b) \beta(u) t^{b-1} \right) \, t^c \tfrac{\partial}{\partial t} \\
    &\psi([u,v]) - [\psi(u) ,\psi (v) ] = (b-a)   \left(  \alpha(u) \beta (v) - \beta (u) \alpha (v)  \right) \, t^c \tfrac{\partial}{\partial t},
 \end{align*}
 so that ${\mathcal F} $ is a singular foliation. 
 
 By construction, $L \times \{0\}$ is a leaf of ${\mathcal F} $ and $I_L$ is then the ideal generated by $t$. The computations above imply that the holonomy Lie algebroid is $ A_L= TL \oplus  \mathbb R$, the projection on $TL$ is as anchor map, and the bracket is given for all  $u,v \in \mathfrak X(L), f,g \in C^\infty(L)$ by:
  \begin{align*}
      [ (u , f), (v , g) ] = ([u,v], u[g] - v [f]+ \omega (u,v) )
  \end{align*}
  where $ \omega = (b-a) \, \alpha \wedge \beta  \in \Omega^2 (L)$.
  Any Ehresmann $\mathcal F $-connection is of the form:
   $$ u \mapsto \psi(u) + \gamma_t(u)  t^c \frac{\partial}{\partial t} $$
   for some $t$-dependent $1$-form $\gamma$ on $L$. Its curvature is $ (\omega + d_{dR} \gamma_0) t^c \tfrac{\partial}{\partial t} + o(t^{c}) $.
  Since $\omega$ is not exact, the leaf $L $ does not admit a flat Ehresmann $\mathcal F$-connection.
\end{example}

The following is an example of a possibly simply connected leaf, which does not admit a section $A_L^s\to A_L^{lin}$ and therefore does not admit a flat Levi $\mathcal F $-connection.
\begin{example}
Let $L$ be a manifold and let $ \omega \in H^2(L,\mathbb Z)$ be the Chern class of an $S^1$-bundle $P \to L$ with connection $\theta$. Consider the associated $\mathbb C $-bundle $ \pi \colon E \to L$, and equip it with the linear Ehresmann connection associated to $ \theta$. The horizontal lifts $\hat{u} $ of vector fields $u$ on $L$, together with the infinitesimal vector field $R$ of the $S^1$-action generate a singular foliation $  \mathcal F $ on $E$. For this foliation $ \mathcal F  $ the zero section $L$ is a leaf. In view of the relation:
 $$ [\hat{u} +\pi^*(f)\cdot R, \hat{v}+\pi^*(g)\cdot  R] = \widehat{[u,v] } + \pi^*( u[g] - v [f]+ \omega(u,v))\cdot  R \hbox{~~~~ for all $ u,v \in {\mathfrak X}(L), f,g \in C^\infty(L)$}$$
 the holonomy Lie algebroid of $L$ is $ A_L = TL \oplus \mathbb R$, its anchor map is the projection onto  $TL$, and its Lie bracket is given for all  $u,v \in \mathfrak X(L), f,g \in C^\infty(L)$ by
 $ [ (u ,f), (v , g) ] = ([u,v], u[g] - v [f]+ \omega (u,v) ) $. As $\omega$ is nonzero in cohomology, there can be no Lie algebroid section from $TL=A_L^s$ to $A_L=A_L^{lin}$.
\end{example}

Let us construct examples for which Theorem \ref{thm:formal} holds. Let $\tau:\Gamma(A)\to\mathfrak X(V)$ be the action of a Lie algebroid $A\to L$ on the vector bundle $V\to L$ and $\mathcal R\subset I_L\mathfrak X(V)$ a singular foliation made of vertical vector fields. If 

\begin{enumerate}
    \item  $\mathcal R $ is invariant under the Lie algebroid action,
    \item $\tau(\Gamma(A))$ intersects $\mathcal R$ trivially, 
\end{enumerate}
 then  vector fields in $\mathcal R $, together with the  vector fields for the infinitesimal   $A$-action on $ V$, generate a singular foliation $\mathcal F $ on $ V$. 
The same construction can be completed when $ \mathcal R$ is substituted by a formal singular foliation  $ \widehat{\mathcal R}$ along the zero section $L$. Let us fix notations:
\begin{definition}
\label{def:semiDirectProduct}
Let $A \to L$ be a Lie algebroid acting faithfully on  $ V$. For every (maybe formal) singular foliation  $\mathcal R$  satisfying the above conditions 1. and 2., then the above singular foliation $ \mathcal F$ is called the \emph{semi-direct product} of $A$ with $\mathcal R$  and is denoted\footnote{Notice that it is not true that $ \mathcal F^{proj} \simeq  \Gamma(A)\ltimes\mathcal R$ as Lie algebras (there is only an inclusion).}  by 
$$ \mathcal F :=  A \, \widehat \ltimes \, \mathcal R.$$
 \end{definition}
   
   \begin{example}
   \label{ex:directprod}
   Consider a singular foliation  $\mathcal R $ on $\mathbb R^n $.
   The \emph{direct product} of $L$ with $\mathcal R $ is obtained by choosing, in Definition \ref{def:semiDirectProduct},  $A$ to be $TL$ and  $ V $ to be the trivial $TL$-module $\mathbb R^n \times L$.
   \end{example}
   
   \begin{example}
   Let $ n \geq 3$.
   Since $ SO (n)  $ acts on the sphere $ S^n$,
   there is a natural action of the transformation Lie algebroid $ A=  {\mathfrak{so}} (n) \times S^n \to S^n$ on $ V= TS^n \to S^n$.
   Let $ \mathcal R $ be the singular foliation on $TS^n$ generated by the Euler vector field. The assumptions in Definition \ref{def:semiDirectProduct} are satisfied. The semi-direct product $ A \widehat \ltimes \mathcal R   $
   is a singular foliation on $ TS^n$, admitting $L=S^n$ as a leaf. For this leaf, the Euler field and the Lie algebroid action above give the formal Euler-like field and the formal Levi $\mathcal F $-connection whose existence is granted by Theorem \ref{thm:formal}.
   
   For $ n=2$, the construction of the singular foliation $\mathcal F $ still makes sense, but $ A_L^s =TL$ and $ A_L^{lin} = {\mathfrak{so}} (3)  \oplus \mathbb R $ (the isotropy of this Lie algebroid at every point in $S^2$ is an Abelian two-dimensional Lie algebra). Since there is no Lie algebroid section $ TS^2 \to {\mathfrak{so}} (3)\oplus \mathbb R $,  Theorem \ref{thm:formal} does not apply.
   \end{example}

\subsection{Geometric reformulation}

Let $L$ be a locally closed leaf of $ \mathcal F$.
Assume both conditions in Theorem \ref{thm:formal} are satisfied: $L$ is simply connected  and a section $z:A_L^s \to A_L^{lin} $ exists. Theorem \ref{thm:formal} then provides:
\begin{enumerate}
    \item[$\spadesuit$] a formal Euler-like vector field $\mathcal E $, tangent to the fibers of $\pi\colon U \to L$.
    \item[$\diamondsuit$] a formal Levi $\mathcal F $-connection $s\colon \Gamma(A_L^s ) \to \widehat{\mathcal F}^{proj} $. 
\end{enumerate}
Moreover, the image of $s$ is made of vector fields commuting with $ \mathcal E$. Let us spell out the content of this data:
\begin{enumerate}
    \item[$\spadesuit$] The formal Euler-like vector field  $\mathcal E $ yields a formal isomorphism $\Phi $ between the fibers of normal bundle $\pi: \nu = \tfrac{TM|_L}{TL} \to L$ to the the fibers of $\pi: U\to L$ that identifies, by construction,  $\mathcal E $ with the Euler vector field $ \mathcal E_\nu$ of the normal bundle. We use $\Phi$ to transport to $ \nu$ the formal jet $\widehat{\mathcal F}$ of $\mathcal F$.
\item[$ \diamondsuit$] The composition $ \Phi^{-1} \circ s$  now becomes a flat Levi $\Phi^{-1}(\widehat{\mathcal F} )$-connection on the fibers of $ \pi \colon \nu \to L$.
\end{enumerate}
Moreover, the image of $\Phi^{-1} \circ s$ is made of vector fields commuting with $ \mathcal E_\nu$, i.e. linear vector fields on $ \nu$, so that the flat Levi $\mathcal F $-connection of item $ \diamondsuit$ is now an Lie algebroid action of $ A_L^s$ on the normal bundle. This proves the following Lemma.

\begin{lemma}
\label{lem:bothActionCoincide}
The image of $\Phi^{-1} \circ s$ is made of linear vector fields on $\nu \to L $. More precisely, for every $ \xi \in \Gamma(A_L^s)$, the linear vector field on $ \nu$ describing the natural  Lie algebroid action of $z(\xi) \in \Gamma(A_L^{lin})$ on $ \nu $ coincides with $ \Phi^{-1} \circ s(\xi)$.
\end{lemma}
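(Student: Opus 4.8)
The plan is to unwind the two formal identifications produced by Theorem~\ref{thm:formal} and observe that they force $\Phi^{-1}\circ s(\xi)$ to be a \emph{linear} vector field, hence determined by its linearization along $L$, which we then compare with the $A_L^{lin}$-action. First, I would recall precisely what $\Phi$ does: since $\mathcal E^\infty$ is a $\pi$-vertical formal Euler-like vector field, Lemma~\ref{lem:Meirenken} (in its formal incarnation) gives a vector bundle structure on $\pi\colon U\to L$ for which $\mathcal E^\infty$ is the fibrewise Euler field, and the remark after that lemma identifies this bundle with $\nu=TM|_L/TL$. So $\Phi\colon\nu\to U$ is the resulting formal isomorphism of bundles over $L$ with $\Phi_*\mathcal E_\nu=\mathcal E^\infty$, where $\mathcal E_\nu$ is the Euler vector field of $\nu$.

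Next I would push the flatness-and-linearity conclusion of Theorem~\ref{thm:formal} through $\Phi$. By that theorem, $[s^\infty(\xi),\mathcal E^\infty]=0$ for every $\xi\in\Gamma(A_L^s)$; transporting by $\Phi^{-1}$ gives $[\Phi^{-1}\circ s(\xi),\mathcal E_\nu]=0$, i.e.\ $\Phi^{-1}\circ s(\xi)$ commutes with the Euler field of $\nu$. As recorded in the remark following the definition of linear $\mathcal F$-connections, a vector field commuting with $\mathcal E_\nu$ is exactly a fibrewise linear vector field on $\nu$; and a linear vector field on a vector bundle is completely determined by its restriction to a covariant differential operator, i.e.\ by its linearization along the zero section $L$. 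Likewise, since $s(\xi)$ is $\pi$-projectable, $\Phi^{-1}\circ s(\xi)$ is a linear vector field covering a genuine vector field on $L$, namely the anchor of $\xi$; so it defines an element of $\Gamma(CDO(\nu))$.

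It remains to identify that element with the image of $z(\xi)\in\Gamma(A_L^{lin})$ under the canonical homomorphism $A_L^{lin}\hookrightarrow CDO(\nu)$ (recall $A_L^{lin}$ was \emph{defined} as the image of $A_L\to CDO(\nu)$). This is where I expect the only real content. The key point is that the linearization along $L$ of any $\pi$-projectable representative of $\xi$ depends only on the class of that representative modulo $I_L^2\mathfrak X^v$, hence only on its image in $\Gamma(A_L^{lin})$, which by construction of $s^\infty$ (it is a Levi $\mathcal F$-connection whose induced map to $\Gamma(A_L^{lin})$ is $z$; cf.\ Lemma~\ref{lem:initial} and the inductive construction, in which the corrections $\sigma^k,\varepsilon^k$ lie in $I_L^k\mathfrak X^v$ with $k\ge 2$, so they do not alter the linearization) equals $z(\xi)$. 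Concretely: the linear vector field attached to an element of $A_L^{lin}$ via $A_L^{lin}\to CDO(\nu)$ is precisely the linearization of any of its lifts in $\mathcal F\subset\mathfrak X_L(M)$, transported to $\nu$; and $s(\xi)$ \emph{is} such a lift of $z(\xi)$, while $\Phi$ restricts to the identity on $\nu$ at the level of first-order jets along $L$ (being a bundle isomorphism fixing the zero section and intertwining Euler fields). Therefore $\Phi^{-1}\circ s(\xi)$ and the linear vector field of $z(\xi)$ have the same linearization along $L$, and being both linear they coincide. The main obstacle is bookkeeping the first-jet identifications — checking that $\Phi$ acts as the identity on the normal bundle to first order and that none of the formal corrections in the proof of Theorem~\ref{thm:formal} disturb the linear part — rather than any substantive new difficulty.
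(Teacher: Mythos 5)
Your argument is correct and takes essentially the same route as the paper: the paper's proof is exactly the discussion preceding the lemma, namely that $\Phi$ intertwines $\mathcal E^\infty$ with $\mathcal E_\nu$, so $[\Phi^{-1}\circ s(\xi),\mathcal E_\nu]=0$ forces the image to consist of fibrewise linear vector fields, which then give the $A_L^{lin}$-action through their linearization along $L$. Your additional bookkeeping --- that $s^\infty$ still projects to $z$ because the corrections $\sigma^k$ lie in $\mathcal F\cap I_L^k\mathfrak X^v$ with $k\ge 2$, and that $\Phi$ is the identity to first order along $L$ --- simply makes explicit what the paper subsumes under ``by construction''.
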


We call \emph{radical foliation of $\mathcal F$} the subspace  $\mathcal R \subset \mathcal F^v $ of all vector fields in $\mathcal F^v $ whose image through the linearization map along $L$ is in the radical of $A_L^{lin}$. In view of the definition of $A_L^s $, it can be defined by:
 $$ \mathcal R := {\mathrm{Ker}}  ( \mathcal F^{proj} \to \Gamma( A_L^{s}))  .$$
 \begin{lemma}
\label{lem:radicalFoliation}
The space $ \mathcal R$ is a singular foliation on $M$, included in $ \mathcal F^v$,  and
$$  [s(\Gamma(A_L^s)) , \widehat{\mathcal R}] \subset\widehat{\mathcal R}   {\mathrm{ ~~and~~ }} s(\Gamma(A_L^s)) \oplus \widehat{\mathcal R} = \widehat{\mathcal F}^{proj}.$$
\end{lemma}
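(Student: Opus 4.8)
The plan is to verify the three assertions in order: (1) $\mathcal R$ is a singular foliation contained in $\mathcal F^v$; (2) $[s(\Gamma(A_L^s)),\widehat{\mathcal R}]\subset\widehat{\mathcal R}$; (3) $s(\Gamma(A_L^s))\oplus\widehat{\mathcal R}=\widehat{\mathcal F}^{proj}$. First I would record that $\mathcal R$ is by definition the kernel of the composite Lie--Rinehart morphism $\mathcal F^{proj}\to\Gamma(A_L^{lin})\to\Gamma(A_L^s)$; being the kernel of a Lie--Rinehart morphism it is automatically a $C^\infty(L)$-submodule closed under the Lie bracket. To see it is contained in $\mathfrak X^v$, note that a $\pi$-projectable vector field in the kernel of $\mathcal F^{proj}\to\mathfrak X(L)$ (which factors through $A_L^s\to TL$) is $\pi$-vertical; so $\mathcal R\subset\mathcal F\cap\mathfrak X^v=\mathcal F^v$. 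For local finite generation one uses the local splitting Lemma \ref{lem:splitting}: near a point of $L$, $\mathcal F$ is a product of $\mathfrak X(\text{ball})$ with the transverse foliation $\mathcal T$, and under this identification $\mathcal R$ is generated by $\mathcal T$-generators together with the vector fields realizing the $\mathfrak{rad}(A_L^{lin})$-part of the linear holonomy; since $A_L^{lin}$ is a vector bundle of finite rank, finitely many such vector fields suffice locally. Hence $\mathcal R$ is a singular foliation, and $\widehat{\mathcal R}:={\widehat{\mathcal C}}\otimes_{C^\infty(M)}\mathcal R$ is a formal singular foliation along $L$.

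For assertion (2): by Lemma \ref{lem:bothActionCoincide}, for $\xi\in\Gamma(A_L^s)$ the vector field $s(\xi)$ (transported to $\nu$) is the linear vector field implementing the $A_L^{lin}$-action of $z(\xi)$ on $\nu$; in particular $s$ is $p$-projectable and projects to $z(\xi)\in\Gamma(A_L^{lin})$ under $\mathcal F^{proj}\to\Gamma(A_L^{lin})$. Given $Y\in\widehat{\mathcal R}$, the bracket $[s(\xi),Y]$ lies in $\widehat{\mathcal F}$ because $s(\xi)\in\widehat{\mathcal F}^{proj}$ and $\widehat{\mathcal F}$ is a formal singular foliation; it lies in $\widehat{\mathfrak X}^v$ because the flow of $s(\xi)$ is $\pi$-fiber-preserving (Remark \ref{rmk:formalFlow}) so its Lie derivative preserves vertical vector fields; and its image under the linearization map along $L$ lies in $\mathfrak{rad}(A_L^{lin})$ because that image equals $[\,z(\xi),\,\mathrm{lin}(Y)\,]$, and $\mathfrak{rad}(A_L^{lin})$ is an ideal of $A_L^{lin}$ stable under the adjoint action. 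Therefore $[s(\xi),Y]\in\mathcal R$ after passing to the formal completion, i.e. $[s(\Gamma(A_L^s)),\widehat{\mathcal R}]\subset\widehat{\mathcal R}$.

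For assertion (3): the sum is direct because $s$ is a section of $\widehat{\mathcal F}^{proj}\to\Gamma(A_L^s)$ whose kernel is precisely $\widehat{\mathcal R}$, so $s(\Gamma(A_L^s))\cap\widehat{\mathcal R}=0$ and $s(\Gamma(A_L^s))+\widehat{\mathcal R}$ maps isomorphically onto $\Gamma(A_L^s)$. For surjectivity of the sum onto $\widehat{\mathcal F}^{proj}$: given $X\in\widehat{\mathcal F}^{proj}$, let $\bar X\in\Gamma(A_L^s)$ be its image; then $X-s(\bar X)$ is $\pi$-projectable, lies in $\widehat{\mathcal F}$, and maps to $0$ in $\Gamma(A_L^s)$, hence lies in $\widehat{\mathcal R}$ by definition. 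Thus every element of $\widehat{\mathcal F}^{proj}$ decomposes (uniquely) as $s(\bar X)+(X-s(\bar X))$, giving the desired direct sum decomposition.

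I expect the main obstacle to be assertion (1), specifically the local finite generation of $\mathcal R$: one must argue carefully, via the splitting lemma, that the ``radical part'' of the linear holonomy together with the transverse foliation is finitely generated as a module, rather than merely that $\mathcal R$ is a submodule of a finitely generated module (which by itself does not give finite generation in the $C^\infty$ category). The key point is that $A_L^{lin}$ is a finite-rank vector bundle and $\mathfrak{rad}$ of a fibrewise semisimple-quotient bundle is again a subbundle (by local triviality, as recalled in Section \ref{sec:semisection}), so finitely many smooth sections of $\mathfrak{rad}(A_L)$ suffice; lifting these to $\mathcal F^{proj}$ via a section as in Lemma \ref{lem:existlift} and adjoining transverse generators yields a finite generating set for $\mathcal R$ locally. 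The bracket-closedness and verticality in (1), and all of (2) and (3), are then essentially formal consequences of the definitions together with Lemmas \ref{lem:bothActionCoincide} and \ref{lem:euler} and Remark \ref{rmk:formalFlow}.
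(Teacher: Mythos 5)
Your proof of the two displayed identities is correct and is essentially the paper's argument (the paper's entire proof is one line): $\widehat{\mathcal R}$ is the kernel of the Lie algebra morphism $\widehat{\mathcal F}^{proj}\to \Gamma(A_L^s)$ and $s$ is a bracket-preserving section of it, whence the direct sum decomposition and the stability of the kernel under brackets with the image of $s$. Your detour for assertion (2) through the linearization map, Lemma \ref{lem:bothActionCoincide} and the ideal property of the fibrewise radical is a longer but valid version of the same point (the quickest route is simply that $[s(\xi),Y]$ is projectable, lies in $\widehat{\mathcal F}$, and maps to $[\xi,0]=0$ under the projection). Like the paper, you identify the formal jet of $\mathcal R$ with the kernel of the formal projection without further comment, which is acceptable here.

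The one genuine problem is in the step you yourself flag as the delicate one, local finite generation of $\mathcal R$ (a point the paper's proof silently omits). You claim that in a splitting chart $\mathcal R$ is generated by the generators of the transverse foliation $\mathcal T$ together with lifts of the radical. But the $\mathcal T$-generators already generate all of $\mathcal F^v$ locally, and $\mathcal F^v$ strictly contains $\mathcal R$ whenever the semisimple part of the isotropy is nonzero: for instance if $\mathcal F$ is generated by the rotation vector fields of $\mathbb R^3$ and $L=\{0\}$, the rotations lie in $\mathcal F^v$ but not in $\mathcal R=I_0\mathcal F$, since their classes in $\mathfrak g_0=\mathfrak{so}(3)$ are not in the (trivial) radical. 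So the module you describe is $\mathcal F^v$, not $\mathcal R$; the two agree only when $A_L^s=TL$, which is precisely the uninteresting case. The repair is straightforward: in a splitting chart with horizontal generators $\partial_{y_i}$ and vertical transverse generators $Z_j$, one checks that the kernel of $\mathcal F^v\to\Gamma(\mathfrak g_L)$ equals $I_L\mathcal F\cap\mathfrak X^v=I_L\mathcal F^v$, which is generated by the products $fZ_j$ with $f$ running over local generators of $I_L$; since $\mathfrak{rad}(\mathfrak g_L)$ is a subbundle, finitely many vertical lifts of a local frame of it, together with these products $fZ_j$, generate $\mathcal R$ locally. With that substitution your assertion (1), and hence the whole argument, goes through.
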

\begin{proof}
The formal jet $\widehat{\mathcal R}$ of $\mathcal R$ along $L$ is the kernel of   $\widehat{ \mathcal F}^{proj} \to \Gamma(A_L^s)$, and $s$ is a Lie algebra section of that projection.
\end{proof}

It follows from Lemma \ref{lem:radicalFoliation} that $\widehat{\mathcal F}^{proj}$ is, as a Lie algebra, isomorphic to the semi-direct product:
 $$ \widehat{\mathcal F}^{proj}  \simeq s(\Gamma(A_L^s)) \ltimes \widehat{\mathcal R}. $$
Using the formal diffeomorphism $\Phi^{-1} $, we see that $\widehat{\mathcal F} $ is indeed a singular foliation of the form described in Definition \ref{def:semiDirectProduct} applied to $ A=A_L^s \to L, \nu = V$ and $ \widehat{\mathcal R}_\nu = \Phi^{-1}(\widehat{\mathcal R})$. Using this language Theorem \ref{thm:formal} takes the following form: 

\begin{theorem}\label{thm:formal2}
Let $L$ be a locally closed leaf of the locally real analytic foliation $\mathcal F$ on a manifold $M$. If $\pi_1(L)=0$, and there exists a Lie algebroid section $z$ from the semi-simple holonomy $A_L^s$ to the linear holonomy $A_L^{lin}$, then:
\begin{enumerate}
    \item the normal bundle $ \nu=\tfrac{TM|_L}{TL} \to L $ comes equipped with a flat $A_L^s $-connection,
    \item there is a formal diffeomorphism between $M$ and $ \nu$ (near $L$) that identifies $\widehat{\mathcal F} $ and a semi-direct product\footnote{For the notation $  A_L^s  \widehat{\ltimes} \widehat{\mathcal R}_\nu $, see Definition \ref{def:semiDirectProduct}.} singular foliation on $\nu \to L $ of the form:
     $$  \widehat{\mathcal F} = A_L^s \,   \widehat{\ltimes} \, \widehat{\mathcal R}_\nu$$
     where ${\mathcal R}_\nu $ is a vertical singular foliation on $ \nu$, tangent the fibers of $ \nu \to L$, invariant under the  $A_L^s $-action on $ \nu$, isomorphic to the formal jet of the radical foliation $\mathcal R $ of $\mathcal F$.
\end{enumerate}
\end{theorem}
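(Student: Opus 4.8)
The strategy is to unpack Theorem \ref{thm:formal} and translate its conclusions into the language of semi-direct products of Definition \ref{def:semiDirectProduct}; the bulk of the work has in fact already been carried out in the Lemmas \ref{lem:bothActionCoincide} and \ref{lem:radicalFoliation} preceding the statement, so the proof is essentially an assembly step. First I would apply Theorem \ref{thm:formal} to produce the $\mathcal F$-neighbourhood $(U,\pi)$, the formal $\pi$-vertical Euler-like vector field $\mathcal E^\infty$, and the formal Levi $\mathcal F$-connection $s^\infty$, which is both $\mathcal E^\infty$-linear and flat. The Euler-like vector field $\mathcal E^\infty$, being formally conjugate to the Euler vector field of the normal bundle by Lemma \ref{lem:Meirenken} (applied formally), gives a formal diffeomorphism $\Phi\colon \nu \to U$ near $L$ identifying $\mathcal E^\infty$ with $\mathcal E_\nu$. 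Transporting $\widehat{\mathcal F}$ through $\Phi^{-1}$ yields a formal singular foliation on $\nu$; this establishes the formal diffeomorphism claimed in item 2, and simultaneously, since $s^\infty$ has image commuting with $\mathcal E^\infty$, Lemma \ref{lem:bothActionCoincide} tells us that $\Phi^{-1}\circ s^\infty$ lands in linear vector fields on $\nu$ and coincides with the $A_L^{lin}$-action (precomposed with $z$) restricted through $s^\infty$. Because $s^\infty$ is a flat Levi connection, this linear action is flat, which is exactly the flat $A_L^s$-connection on $\nu$ asserted in item 1.

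For the semi-direct product decomposition, I would invoke Lemma \ref{lem:radicalFoliation}: the formal jet $\widehat{\mathcal R}$ of the radical foliation $\mathcal R := \mathrm{Ker}(\mathcal F^{proj}\to\Gamma(A_L^s))$ is precisely the kernel of $\widehat{\mathcal F}^{proj}\to\Gamma(A_L^s)$, the flat connection $s^\infty$ splits this surjection as Lie algebras, and hence $\widehat{\mathcal F}^{proj}\simeq s^\infty(\Gamma(A_L^s))\ltimes\widehat{\mathcal R}$ with $[s^\infty(\Gamma(A_L^s)),\widehat{\mathcal R}]\subset\widehat{\mathcal R}$. One then transports everything to $\nu$ via $\Phi^{-1}$, setting $\mathcal R_\nu := \Phi^{-1}(\mathcal R)$; the vertical vector fields in $\mathcal R$ remain vertical (both $\mathcal E^\infty$ and $\mathcal E_\nu$ are $\pi$-vertical and $\Phi$ is fibre-preserving), and $\mathcal R_\nu$ vanishes on $L$ and is invariant under the linear $A_L^s$-action because $\mathcal R$ is invariant under $s^\infty(\Gamma(A_L^s))$. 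It remains to check that the two hypotheses of Definition \ref{def:semiDirectProduct} hold for the pair $(A_L^s, \mathcal R_\nu)$ acting on $\nu$: invariance is the bracket relation just noted, and the triviality of the intersection $\tau(\Gamma(A_L^s))\cap\mathcal R_\nu$ follows from the fact that $s^\infty$ is a genuine section, so its image meets the kernel $\widehat{\mathcal R}$ only in zero. This identifies $\widehat{\mathcal F}$ on $\nu$ with $A_L^s\,\widehat{\ltimes}\,\mathcal R_\nu$, as claimed.

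The main obstacle, insofar as there is one at the level of this statement, is bookkeeping rather than a genuine difficulty: one must be careful that the passage from the smooth/real-analytic radical foliation $\mathcal R$ to its formal jet $\widehat{\mathcal R}$ commutes correctly with taking kernels of the surjections in \eqref{xypic:surjections}, and that the semi-direct product construction of Definition \ref{def:semiDirectProduct} reproduces exactly $\widehat{\mathcal F}$ rather than some a priori larger or smaller submodule. The first point is handled by the faithful flatness statements already quoted in the excerpt (formal functions faithfully flat over real analytic functions); the second follows because $s^\infty(\Gamma(A_L^s))$ and $\widehat{\mathcal R}$ together span $\widehat{\mathcal F}^{proj}$ by Lemma \ref{lem:radicalFoliation}, and the full module $\widehat{\mathcal F}$ is recovered from its projectable part by $\widehat{\mathcal C}$-linear combination, exactly matching the generators of the semi-direct product. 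One should also note — as the footnote to Definition \ref{def:semiDirectProduct} warns — that the isomorphism $\widehat{\mathcal F}^{proj}\simeq s^\infty(\Gamma(A_L^s))\ltimes\widehat{\mathcal R}$ is an isomorphism of Lie algebras of \emph{projectable} vector fields and does not directly say the modules coincide; the identification of the full foliations is obtained instead at the level of generators, via Proposition 1.5 of \cite{AS}, after transporting along $\Phi^{-1}$.
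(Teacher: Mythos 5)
Your proposal is correct and follows essentially the same route as the paper: apply Theorem \ref{thm:formal}, use the formal Euler-like vector field to identify a neighbourhood of $L$ with $\nu$, and then invoke Lemmas \ref{lem:bothActionCoincide} and \ref{lem:radicalFoliation} to obtain the flat $A_L^s$-action and the semi-direct product decomposition. Your extra care about passing from the Lie-algebra splitting of $\widehat{\mathcal F}^{proj}$ to the identification of the full $\widehat{\mathcal C}$-modules (via projectable generators and Proposition 1.5 of \cite{AS}) is a point the paper leaves implicit, and is handled correctly.
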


\begin{remark}
The decomposition $  \widehat{\mathcal F} = A_L^s  \widehat{\ltimes}  \widehat{\mathcal R}_\nu$ must not confuse the reader. Vector fields arising from the infinitesimal action of sections of the Lie algebroid $\Gamma(A_L^s)$ on $ \nu$ are in direct sum with $  \widehat{\mathcal R}_\nu$.
But the $\mathcal C$-module generated by this infinitesimal action is a singular foliation that does in general intersect $ \widehat{ \mathcal R}_\nu$.
The corollary below gives an example where this module contains $  \widehat{\mathcal R}_\nu$.\end{remark}

\begin{corollary}
\label{coro:holonomyIsSemiSimple}
Let $L$ be a locally closed leaf of the locally real analytic foliation $\mathcal F$ on a manifold $M$. If $\pi_1(L)=0$, and $A_L^s=A_L$, then there is a formal diffeomorphism between $\mathcal F $ and the singular foliation associated to the natural Lie algebroid action of the holonomy Lie algebroid $A_L$ on the normal bundle.
\end{corollary}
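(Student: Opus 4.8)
The plan is to derive Corollary \ref{coro:holonomyIsSemiSimple} as a direct specialization of Theorem \ref{thm:formal2}. Under the hypothesis $A_L^s = A_L$, the radical $\mathfrak{rad}(\mathfrak g_L)$ is zero, and in particular so is the nilpotent ideal $A_L^2 = \ker(A_L\to A_L^{lin})$ supplied by Theorem \ref{thm:arnilpotence}; hence $A_L^{lin} = A_L = A_L^s$ as well. The identity map then serves as a Lie algebroid section $z\colon A_L^s \to A_L^{lin}$, so both hypotheses of Theorem \ref{thm:formal} (and thus Theorem \ref{thm:formal2}) are satisfied once $\pi_1(L)=0$ is assumed.

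First I would invoke Theorem \ref{thm:formal2} to obtain the formal diffeomorphism between a neighbourhood of $L$ in $M$ and a neighbourhood of the zero section in $\nu = \tfrac{TM|_L}{TL}$, identifying $\widehat{\mathcal F}$ with the semi-direct product $A_L^s \,\widehat{\ltimes}\, \widehat{\mathcal R}_\nu$, where $\mathcal R$ is the radical foliation. Next I would identify $\mathcal R$: by definition $\mathcal R = \ker(\mathcal F^{proj}\to \Gamma(A_L^s))$, and since $A_L = A_L^s$ here, the composite surjection $\mathcal F^{proj}\to \Gamma(A_L)\to \Gamma(A_L^s)$ is (up to the anchor) essentially the passage $\mathcal F^{proj}\to \Gamma(A_L)$. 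More usefully, $\mathcal R \subset \mathcal F^v$ consists of vertical vector fields whose linearization lies in $\mathfrak{rad}(A_L^{lin}) = 0$; that is, $\mathcal R$ consists of vertical vector fields in $\mathcal F$ that vanish to second order along $L$, so $\mathcal R \subset I_L \mathcal F$. Combined with the fact that $\mathcal R$ generates, together with $s(\Gamma(A_L^s))$, all of $\widehat{\mathcal F}^{proj}$ (Lemma \ref{lem:radicalFoliation}), this forces $\widehat{\mathcal R}_\nu$ to be generated over $\widehat{\mathcal C}$ by the image of the $A_L^s$-action together with vector fields already in the $\widehat{\mathcal C}$-module it generates — more precisely, $\widehat{\mathcal F}$ as a $\widehat{\mathcal C}$-module is generated by the linear vector fields realizing the $A_L^s$-action on $\nu$, since any vertical generator of $\widehat{\mathcal R}_\nu$ lies in $\widehat I_L \widehat{\mathcal F}$ and may be absorbed by Nakayama-type reasoning into the module generated by the action. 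Hence the singular foliation $\widehat{\mathcal F}$ coincides with the one associated to the natural Lie algebroid action of $A_L$ on $\nu$, which by Lemma \ref{lem:bothActionCoincide} is exactly the action described in the statement.

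The main obstacle is the last step: showing that the $\widehat{\mathcal C}$-module generated by the infinitesimal $A_L^s$-action on $\nu$ actually contains $\widehat{\mathcal R}_\nu$, rather than merely being in direct sum with $s(\Gamma(A_L^s))$ as a Lie algebra (the Remark preceding the corollary flags precisely this subtlety). The argument is that any vertical generator $Y$ of $\widehat{\mathcal R}_\nu$ lies in $\widehat I_L \widehat{\mathcal F}$ — because its linearization vanishes — so $Y = \sum_i f_i X_i$ with $f_i\in \widehat I_L$ and $X_i$ generators of $\widehat{\mathcal F}$; iterating, using that $\widehat{\mathcal F}$ is finitely generated, the linear vector fields $s(\xi)$ for $\xi$ a frame of $A_L^s$ together with $\widehat I_L$-multiples suffice, and the Artin–Rees/Nakayama mechanism (the filtration of Lemma \ref{lem:filtration} now terminates at $A_L^2 = 0$) closes the induction. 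Once this is established, the formal diffeomorphism of Theorem \ref{thm:formal2} transports $\widehat{\mathcal F}$ onto the singular foliation generated by the $A_L$-action on $\nu$, completing the proof.
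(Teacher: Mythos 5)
Your proposal is correct and takes essentially the same route as the paper: both reduce to Theorem \ref{thm:formal2} (noting $A_L^s=A_L$ forces $\mathfrak{rad}(\mathfrak g_L)=0$, hence $A_L^2=0$ and $A_L^{lin}=A_L$, so the identity serves as the section $z$) and then conclude by showing that the image of the Levi connection $s$ generates $\widehat{\mathcal F}$ as a module, which makes the semi-direct product collapse onto the foliation of the $A_L$-action on $\nu$ via Lemma \ref{lem:bothActionCoincide}. The only difference is that the paper gets the generation statement by citing Proposition 1.5 of \cite{AS} (lifts of a local frame of $A_L$ generate $\mathcal F$), whereas you reprove it by hand with a completed Nakayama-type iteration, which is indeed valid in the formal setting.
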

\begin{proof}
According to Proposition 1.5 in \cite{AS}, in a neighbourhood of $p \in L $, the singular foliation $ \mathcal F$ is generated by any family $ X_1, \dots, X_d$ of vector fields in $ \mathcal F$ whose image in $ \Gamma (A_L)$ is a local trivialization of $ A_L$. As a consequence, the image of $s$ generates $ \mathcal F$.
The result then follows from Theorem \ref{thm:formal2}.
\end{proof}

\section{Local and semi-local structure of a singular foliation}

\label{sec:applications}

\subsection{Local structure of a singular foliation: Levi theorems}
\label{sec:point}

Let us explore the consequences of Theorem \ref{thm:formal} in the neighbourhood of a point $p$ in a manifold $M$ equipped with a singular foliation $\mathcal F $. Splitting Lemma \ref{lem:splitting} allows to make the additional assumption that all vector fields vanish at $p$, upon replacing $M$ with a small disk transversal to the leaf through $m$ if necessary. 

\begin{center}
Throughout Section \ref{sec:point},  $\mathcal F$ shall be a locally real analytic \\ singular foliation made of vector fields that vanish at a point $p \in M$.
\end{center}

\subsubsection{Relation with Dominique Cerveau's Levi theorems}

The requirements of Theorem \ref{thm:formal} (namely  \emph{``If $\pi_1(L)=0$, and there exists a Lie algebroid section $z$ from the semi-simple holonomy $A_L^s$ to the linear holonomy $A_L^{lin}$''}) hold automatically: 
\begin{enumerate}
    \item Since $L$ is reduced to the point $\{p\} $, it is simply connected.
    \item The Lie algebroids $A_L, A_L^{lin}, A_L^s $ are finite dimensional Lie algebras:
    \begin{enumerate}
        \item $A_L$ is the isotropy Lie algebra $\mathfrak g_p$ at $p$,
        \item $A_L^{lin}$ is the quotient $ \mathfrak g_p / \mathfrak g_p^{\geq 2}$
        \item $A_L^{s} $ is the semi-simple part $ \mathfrak g_p^s$ of the Lie algebra $\mathfrak g_p $. 
    \end{enumerate}
    Now, in view of the usual Levi-Malcev decomposition theorem for finite dimensional Lie algebras, a Lie algebra section  $\mathfrak g_p^s \to \mathfrak g_p$ exists. Its composition with the natural projection $ \mathfrak g_p \to \mathfrak g_p^{lin} $ is a Lie algebra section $\mathfrak g_p^s \to \mathfrak g_p^{lin}$. 
\end{enumerate}

Theorem \ref{thm:formal} specializes therefore to yield the following corollary: 

\begin{corollary} \label{coro:Cerveau}(Dominique Cerveau)
Let $\mathfrak g^s$ be the semi-simple part of the isotropy Lie algebra of  $\mathcal F $ at $\mathfrak g $. Then there exists a Lie algebra morphism  $s:\mathfrak g^s\to \widehat{\mathcal F}_{p}$ and a formal Euler-like vector field $\mathcal E$ with respect to which the image of $s$ is made of formally linear vector fields.
\end{corollary}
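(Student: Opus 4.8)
The plan is to deduce Corollary \ref{coro:Cerveau} directly from Theorem \ref{thm:formal} by checking that both hypotheses are satisfied in this special situation, which has already been spelled out in the paragraph preceding the statement. First I would observe that since the ambient ``leaf'' is the point $L=\{p\}$, it is vacuously simply connected, so the condition $\pi_1(L)=0$ holds. Second, I would recall that for $L=\{p\}$ the three Lie algebroids degenerate to finite-dimensional Lie algebras: $A_L=\mathfrak g_p$ is the isotropy (Androulidakis--Skandalis) Lie algebra, $A_L^{lin}=\mathfrak g_p/\mathfrak g_p^{\geq 2}$ is its linear part, and $A_L^s=\mathfrak g_p^s$ is its semi-simple quotient. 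The classical Levi--Malcev theorem supplies a Lie algebra splitting $\mathfrak g_p^s\hookrightarrow\mathfrak g_p$; post-composing with the canonical projection $\mathfrak g_p\twoheadrightarrow\mathfrak g_p^{lin}$ yields the required Lie algebroid section $z\colon A_L^s\to A_L^{lin}$.

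Having verified both hypotheses, I would invoke Theorem \ref{thm:formal} to obtain an $\mathcal F$-neighbourhood $(U,\pi)$ of $p$ (here $\pi$ is simply the constant map to the point, so $\mathcal F^{proj}=\mathcal F$ and the ``vertical'' vector fields are just all of $\mathcal F$), a formal Levi $\mathcal F$-connection $s^\infty\colon\Gamma(A_L^s)\to\widehat{\mathcal F}^{proj}$, and a formal Euler-like vector field $\mathcal E^\infty$. Specializing the statement of Theorem \ref{thm:formal} to $L=\{p\}$, the map $s^\infty$ becomes a Lie algebra morphism $s\colon\mathfrak g_p^s\to\widehat{\mathcal F}_p$ (flatness of $s^\infty$ being exactly the statement that $s$ is a bracket-preserving map into the Lie algebra of formal vector fields vanishing at $p$), and the linearity condition $[s^\infty(\xi),\mathcal E^\infty]=0$ says precisely that every vector field in the image of $s$ is formally linear with respect to $\mathcal E:=\mathcal E^\infty$. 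This is the assertion of the corollary.

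There is essentially no obstacle here: the corollary is a direct instantiation of the main theorem, and the only thing to ``prove'' is the translation of terminology between the general leaf setting and the point setting, together with the elementary observation that the classical Levi--Malcev theorem furnishes the algebraic section required as input. For completeness I would also note that the image Lie algebra of $s$, being made of formally linear vector fields in $\mathcal F$, coincides (after choosing coordinates diagonalizing $\mathcal E$) with the image of the linearization $\mathrm{lin}\colon\mathcal F\to\mathfrak{gl}(T_pM)$ restricted appropriately, recovering Cerveau's original formulation; but this identification is not needed for the statement as phrased.
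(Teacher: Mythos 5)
Your proposal is correct and follows essentially the same route as the paper: it verifies the two hypotheses of Theorem \ref{thm:formal} exactly as the paper does (the point leaf is trivially simply connected, and the classical Levi--Malcev theorem gives a section $\mathfrak g_p^s\to\mathfrak g_p$ which, composed with the projection onto $\mathfrak g_p^{lin}$, supplies the required $z$), and then specializes the theorem to $L=\{p\}$. The translation of the formal Levi $\mathcal F$-connection and Euler-like vector field to the point setting matches the paper's intended reading, so there is nothing to add.
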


A comparison of this Corollary with Theorem 2.1 in Dominique Cerveau's \cite{Cerveau} shows that both statements are equivalent (although stated and proved quite differently here). Also, for $L=\{p\}$, Corollary \ref{coro:holonomyIsSemiSimple} recovers the second part of Theorem 2.2 in \cite{Cerveau}.

\subsubsection{Levi theorem for projective foliations}

Let us assume that  $\mathcal F$ is a projective module over $C^\infty(M)$ 
\footnote{i.e. ``Debord foliations'' in the terminology of \cite{LLS}.}.
In this case \cite{Debord}, there exists a Lie algebroid $(A, [\cdot, \cdot], \rho)$, such that the anchor map $\rho\colon A \to \coprod_{m \in M} T_m {\mathcal F} $, although it is not an isomorphism at every point,  is an isomorphism (of  $C^\infty(M)$-modules)  at the level of sections:
 \begin{align*}
     \rho\colon \Gamma(A) \cong   {\mathcal F}.
 \end{align*}
 Since all vector fields in $ \mathcal F$ vanish at $p$, we have that $\rho|_p =0 $, so that the fiber  of $A_p$ is a Lie algebra: it is easily shown to coincide with the isotropy Lie algebra ${\mathfrak g}_{p}$. Applying Corollary \ref{coro:Cerveau} to this situation yields the following result, where $ \widehat{\Gamma}(A) $ stands for formal sections of a vector bundle $A$ near  $p$:
 
\begin{corollary} \label{coro:WDZ} (\cite{MR1800493,MR1881647,Zung})
Let$ A$ be the Lie algebroid  associated to a projective singular foliation made of vector fields vanishing at  $ \{p\}$.
Denote by $A_p^s$  the semi-simple part of the isotropy Lie algebra $A_p $. Then there exists a Lie algebra morphism  $s:A_p^s\to \widehat{\Gamma}(A) $ and a formal Euler-like vector field $\mathcal E$ with respect to which the image of $\rho \circ s$ is made of formally linear vector fields.
\end{corollary}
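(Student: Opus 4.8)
The plan is to deduce Corollary~\ref{coro:WDZ} directly from Corollary~\ref{coro:Cerveau} by transporting the conclusion along the identification $\rho\colon\Gamma(A)\xrightarrow{\cong}\mathcal F$. First I would recall that since $\mathcal F$ is projective, the anchor $\rho\colon A\to TM$ restricts to an isomorphism of $C^\infty(M)$-modules $\Gamma(A)\cong\mathcal F$; passing to formal sections along $\{p\}$, faithful flatness of formal functions (as used throughout Section~\ref{sec:formalthm}) gives $\widehat\Gamma(A)\cong\widehat{\mathcal F}_p$ as $\widehat{\mathcal C}$-modules and as Lie algebras, the bracket on $\Gamma(A)$ being the one transported from vector fields. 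Under this identification the fiber $A_p=\ker(\rho|_p)$ is exactly the Androulidakis--Skandalis isotropy Lie algebra $\mathfrak g_p$, because all vector fields in $\mathcal F$ vanish at $p$ so $\rho|_p=0$ and $\Gamma(A)/I_p\Gamma(A)\cong\mathcal F/I_p\mathcal F=\mathfrak g_p$; consequently $A_p^s$ in the statement coincides with $\mathfrak g_p^s$.

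Next I would simply invoke Corollary~\ref{coro:Cerveau}: it produces a Lie algebra morphism $s_0\colon\mathfrak g_p^s\to\widehat{\mathcal F}_p$ and a formal Euler-like vector field $\mathcal E$ with respect to which $s_0(\mathfrak g_p^s)$ consists of formally linear vector fields. Composing with the inverse of $\rho$ at the level of formal sections yields $s:=\rho^{-1}\circ s_0\colon A_p^s\to\widehat\Gamma(A)$, a Lie algebra morphism since $\rho^{-1}$ is a Lie algebra isomorphism. By construction $\rho\circ s=s_0$, whose image is made of formally linear vector fields with respect to $\mathcal E$. This is exactly the assertion of the corollary.

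The only genuinely substantive point — and the place where one must be a little careful — is the identification of $A_p$ with $\mathfrak g_p$ and the verification that the Lie bracket on $\Gamma(A)$ matches the bracket on $\mathcal F$ under $\rho$, so that ``Lie algebra morphism into $\widehat\Gamma(A)$'' and ``Lie algebra morphism into $\widehat{\mathcal F}_p$'' are literally the same data. Both are immediate from the defining property of the Debord Lie algebroid in \cite{Debord}: $\rho$ is by definition a Lie-Rinehart morphism inducing an isomorphism on sections, hence automatically an isomorphism of Lie-Rinehart algebras, and the isotropy at a point where the anchor vanishes is the quotient $\Gamma(A)/I_p\Gamma(A)$, which $\rho$ identifies with $\mathcal F/I_p\mathcal F=\mathfrak g_p$. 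With that in hand there is no further obstacle; the corollary is a formal transport of Corollary~\ref{coro:Cerveau}, and I would only add a sentence pointing out that for $A$ itself a true (everywhere non-degenerate) Lie algebroid this recovers the formal Levi theorems of Weinstein--Dufour--Zung \cite{MR1800493,MR1881647,Zung}, now seen as a special case.
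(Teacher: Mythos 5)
Your proposal is correct and takes essentially the same route as the paper: identify $\Gamma(A)$ with $\mathcal F$ through the anchor (hence $A_p\cong\mathfrak g_p$ and $A_p^s\cong\mathfrak g_p^s$, since $\rho|_p=0$) and transport the conclusion of Corollary~\ref{coro:Cerveau} back through $\rho^{-1}$ at the level of formal sections. The only cosmetic remarks are that the identification $\widehat\Gamma(A)\cong\widehat{\mathcal F}_p$ needs no faithful flatness (tensoring the isomorphism $\rho\colon\Gamma(A)\cong\mathcal F$ with $\widehat{\mathcal C}$ suffices), and that the appeal to Corollary~\ref{coro:Cerveau} is licensed by the standing local real-analyticity assumption of Section~\ref{sec:point}.
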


This statement indeed holds true for any Lie algebroid, see \cite{MR1800493,MR1881647,Zung}.

\subsection{Sections to the Holonomy Lie ($\infty$-) algebroid}

Let $\mathcal F$ be a locally real analytic singular foliation. For every leaf $L$ such that $\pi_1(L)=\pi_2(L)=0$, Corollary \ref{cor:sectohol} assures the existence of a Lie algebroid section $A_L^s\to A_L$. Using Theorem \ref{thm:formal}, we can loosen the 2-connectedness condition for $M$ as follows:

\begin{proposition}\label{prop:sectoAL}
 Let $\mathcal F$ be a locally real analytic singular foliation and $L$ a simply connected and locally closed leaf, such that there exists a Lie algebroid section $A_L^s\to A_L^{lin}$. Then there exists a Lie algebroid section $A_L^s\to A_L$.
\end{proposition}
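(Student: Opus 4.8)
The statement asks us to lift the section $z\colon A_L^s\to A_L^{lin}$ to a section $A_L^s\to A_L$, using only that $L$ is simply connected (rather than $2$-connected as in Corollary \ref{cor:sectohol}). The natural approach is to feed the hypotheses into Theorem \ref{thm:formal} and then extract an honest (non-formal) Lie algebroid section from the formal data it produces. Concretely, Theorem \ref{thm:formal} gives an $\mathcal F$-neighbourhood $(U,\pi)$, a formal Levi $\mathcal F$-connection $s^\infty\colon\Gamma(A_L^s)\to\widehat{\mathcal F}^{proj}$ and a $\pi$-vertical formal Euler-like vector field $\mathcal E^\infty$ such that $s^\infty$ is flat and linear with respect to $\mathcal E^\infty$.

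\textbf{Main step: from a flat formal Levi connection to a section of $A_L$.} Compose $s^\infty$ with the canonical surjection $\widehat{\mathcal F}^{proj}\twoheadrightarrow\Gamma(A_L)$ (the first arrow of \eqref{xypic:surjections}, which makes sense formally by the lemma identifying ${\mathcal F}/I_L{\mathcal F}$ with $\widehat{\mathcal F}/\widehat I_L\widehat{\mathcal F}$ for locally real analytic $\mathcal F$). Call the composite $\sigma\colon\Gamma(A_L^s)\to\Gamma(A_L)$. It is $C^\infty(L)$-linear since $s^\infty$ is, and it is a right inverse of the projection $\Gamma(A_L)\to\Gamma(A_L^s)$ because $\widehat{\mathcal F}^{proj}\to\Gamma(A_L)\to\Gamma(A_L^s)$ equals the projection $\widehat{\mathcal F}^{proj}\to\Gamma(A_L^s)$ of which $s^\infty$ is a section. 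Finally $\sigma$ is bracket-preserving: flatness of $s^\infty$ says $[s^\infty(\xi),s^\infty(\zeta)]=s^\infty([\xi,\zeta])$ exactly in $\widehat{\mathcal F}^{proj}$, and the surjection $\widehat{\mathcal F}^{proj}\to\Gamma(A_L)$ is a Lie algebra morphism, so $[\sigma(\xi),\sigma(\zeta)]=\sigma([\xi,\zeta])$. Since $\sigma$ is $C^\infty(L)$-linear and bracket-compatible, it is a Lie algebroid morphism $A_L^s\to A_L$, which is the desired section.

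\textbf{Expected obstacle.} The only genuinely delicate point is making sure nothing is lost when passing from the formal world back to $A_L$ itself. This is not an issue because $A_L$ is a finite-rank vector bundle over $L$ and $\Gamma(A_L)$ is a quotient of $\mathcal F$ by $I_L\mathcal F$ that factors through the formal jet: the map $\widehat{\mathcal F}^{proj}\to\Gamma(A_L)$ is well-defined and surjective precisely because formal (equivalently, real analytic) functions are faithfully flat over the relevant rings, as recorded in the lemma preceding Theorem \ref{thm:formal}. So the formal ambiguity collapses and $\sigma$ lands in the honest section space; no further shrinking of the neighbourhood or cohomological vanishing is required beyond what Theorem \ref{thm:formal} already used (Whitehead I, which needs only $\pi_1(L)=0$). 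I would therefore present the proof in one short paragraph: invoke Theorem \ref{thm:formal}, compose with $\widehat{\mathcal F}^{proj}\to\Gamma(A_L)$, and check the three properties (linearity over $C^\infty(L)$, being a section, bracket-preservation) listed above.
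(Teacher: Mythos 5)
Your argument is correct, and it reaches the conclusion by a slightly different route than the paper. You invoke Theorem \ref{thm:formal} as a black box, take the exactly flat formal Levi connection $s^\infty$, and push it through the quotient $\widehat{\mathcal F}^{proj}\subset\widehat{\mathcal F}\to\widehat{\mathcal F}/\widehat I_L\widehat{\mathcal F}\simeq\Gamma(A_L)$; since the surjection onto $\Gamma(A_L^s)$ factors through this quotient, the composite is a $C^\infty(L)$-linear, bracket-preserving right inverse of $\Gamma(A_L)\to\Gamma(A_L^s)$, i.e.\ a Lie algebroid section. The paper instead ``stops early'' in the induction of Proposition \ref{prop:step}: with $c$ the Artin--Rees bound of $\mathcal F$ at $L$, it runs the iteration only up to order $c+1$, obtaining an honest (non-formal) Levi $\mathcal F$-connection $s^{c+1}\colon\Gamma(A_L^s)\to\mathcal F$ whose curvature lies in $I_L^{c+1}\mathfrak X^v\cap\mathcal F\subset I_L\mathcal F$, and which therefore already descends to a Lie algebroid section of $A_L\to A_L^s$. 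The trade-off: your version needs the identification of the smooth and formal holonomy Lie algebroids (the faithful-flatness lemma preceding Theorem \ref{thm:formal}) but requires no knowledge of the internals of the induction, whereas the paper's version stays entirely in the smooth category, with Artin--Rees absorbing the error term, and produces along the way a genuine connection $s^{c+1}$ with curvature in $I_L\mathcal F$ — a by-product the paper reuses elsewhere (e.g.\ in the corollary following Theorem \ref{theo:transvQuadr}). One small imprecision on your side: you do not actually need surjectivity of $\widehat{\mathcal F}^{proj}\to\Gamma(A_L)$, only that the projection to $\Gamma(A_L^s)$ factors through it, so the appeal to faithful flatness for ``surjectivity'' can be dropped; what faithful flatness genuinely provides is the isomorphism $\mathcal F/I_L\mathcal F\simeq\widehat{\mathcal F}/\widehat I_L\widehat{\mathcal F}$ that lets the formal construction land in the honest $\Gamma(A_L)$.
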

\begin{proof}
Let $c$ be the Artin-Rees bound for $\mathcal F$ at $L$. By ``stopping early'' in the iteration for Theorem \ref{thm:formal}, we obtain a section $s=s^{c+1}:\Gamma(A_L^s)\to \mathcal F$ and an Euler-like vector field $\mathcal E=\mathcal E^{c+1}$ such that $[s (\xi), s(\zeta)]- s([\xi,\zeta]) \in  I^{c+1}_L\mathfrak X^v\cap \mathcal F\subset I_L\mathcal F$ for all $\xi,\zeta \in \Gamma(A_L^s)$. Such a map $s$ induces a section $A_L^s\to A_L$ which is a Lie algebroid section.
\end{proof}

Proposition \ref{prop:sectoAL} can be generalized as follows. For the sake of simplicity, we will assume below that the formal $A_L^s$-action in Theorem \ref{thm:formal} is convergent, and that the leaf $L$ is compact, so that we may refer to existing results in \cite{LLS} and \cite{LGR}. These additional assumptions are certainly not relevant for both Propositions below, but avoiding them would require to extend to the formal setting the statements we will refer to.
In \cite{LLS}, it is shown that every real analytic singular foliation $\mathcal F $ is, locally on a neighbourhood $U$ of a point, the image through the anchor map of a \emph{universal Lie $\infty$-algebroid}, i.e.  a Lie $\infty $-algebroid $\mathbb U^{\mathcal F}=(E_{-i},[\cdots]_i, \rho)$ whose  $1 $-ary bracket $d=[\cdot]_1$, together with its anchor map:
 $$  \cdots  \overset{d}{\longrightarrow}  \Gamma(E_{-2})  \overset{d}{\longrightarrow}   \Gamma(E_{-1})  \overset{\rho}{\longrightarrow} \mathcal F |_U$$
form a projective resolution of $\mathcal F $. 
In Theorem 2.26 in \cite{LGR}, the universal Lie $\infty$-algebroid is shown to exists in a neighborhood of a compact leaf. The restriction of such a Lie $\infty $-algebroid $ \mathbb U^\mathcal F $ to  $L $ yields a transitive Lie $\infty $-algebroid  over $L $ denoted by $\mathbb U^\mathcal F |_L $. It admits a canonical Lie $ \infty$-morphism onto $A_L $. We call $ \Pi$ its composition with the projection $A_L \to  A_L^s$.

\begin{proposition}\label{prop:sectoU}
 Let $\mathcal F$ be a locally real analytic singular foliation and $L$ a simply connected and compact leaf, such that there exists a Lie algebroid section $A_L^s\to A_L^{lin}$. 
 We assume that the formal section $ \Gamma(A_L^s) \to \mathcal F  $ whose existence is granted by Theorem \ref{thm:formal} can be chosen to converge in a neighborhood of $ L$.  Then $ \Pi$ admits a Lie $\infty $-algebroid section $A_L^s\to {\mathbb U}^\mathcal F {|_L} $.
\end{proposition}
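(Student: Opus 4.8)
The plan is to lift the convergent Lie algebroid section $s\colon \Gamma(A_L^s)\to\mathcal F$ produced by Theorem \ref{thm:formal} (and assumed convergent here) up the projective resolution that defines the universal Lie $\infty$-algebroid $\mathbb U^{\mathcal F}|_L$, using the standard homological perturbation/obstruction argument: since $\rho\colon\Gamma(E_{-1})\to\mathcal F$ is surjective with the complex $\cdots\to\Gamma(E_{-2})\xrightarrow{d}\Gamma(E_{-1})\xrightarrow{\rho}\mathcal F\to 0$ exact, a section on the level of $\mathcal F$ can be promoted to a Lie $\infty$-morphism into $\mathbb U^{\mathcal F}|_L$ provided the successive obstructions — which live in $H^\bullet$ of $A_L^s$ with values in the cohomology bundles of the resolution — vanish. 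The key inputs are that $A_L^s$ has semisimple isotropy and that $L$ is simply connected, so the Whitehead-type vanishing Lemma \ref{lem:h1iszero} applies in the degrees that matter.

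**Key steps.** First I would set up the target: by Theorem 2.26 in \cite{LGR} the universal Lie $\infty$-algebroid exists near the compact leaf $L$, restricts to a transitive Lie $\infty$-algebroid $\mathbb U^{\mathcal F}|_L$ over $L$, and comes with the canonical Lie $\infty$-morphism to $A_L$; composing with $A_L\to A_L^s$ gives $\Pi$. Second, I would start from the linear (arity-one) component: the convergent section $s$ gives a bundle map $A_L^s\to E_{-1}$ over $L$ covering the identity such that $\rho$ composed with it is $s$ modulo the kernel, and this is the zeroth-order data of the desired $\infty$-morphism. Third, I would run the induction: at each stage the failure of the partially-constructed $\infty$-morphism to be a genuine Lie $\infty$-morphism is a Lie algebroid cocycle of $A_L^s$ valued in a finite-dimensional flat $A_L^s$-module built from the (co)homology of the resolution $(E_\bullet,d)$ restricted to $L$ — because the resolution is exact, these cohomology bundles are concentrated in a way that forces the relevant obstruction classes into $H^1(A_L^s,-)$ (or at worst into a degree handled by semisimplicity and $\pi_1(L)=0$), which is trivial by Lemma \ref{lem:h1iszero}. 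Choosing a primitive at each step and correcting the morphism, the induction either terminates (if the resolution has finite length, e.g. by Hilbert's syzygy-type bounds for the real analytic module) or stabilizes, yielding the sought Lie $\infty$-algebroid section $A_L^s\to\mathbb U^{\mathcal F}|_L$ of $\Pi$.

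**Main obstacle.** The delicate point is bookkeeping the higher brackets: a Lie $\infty$-morphism is an infinite family $(F_n)_{n\ge 1}$ of multilinear maps, and the obstruction at level $n$ involves compositions of the lower $F_j$ with all the brackets $[\cdots]_i$, so one must verify carefully that each obstruction is (i) well-defined on cohomology, (ii) genuinely a Lie algebroid cocycle for the induced flat connection, and (iii) lands in a cohomology group that Lemma \ref{lem:h1iszero} (or the 2-connected analogue) actually kills. A secondary technical issue is convergence/regularity — the hypothesis lets us take $s$ convergent, but one should check the primitives produced at each inductive step can also be taken smooth (resp. real analytic) on a neighbourhood of $L$; this follows from the projectivity of the relevant $C^\infty(L)$-modules together with the standard argument via partitions of unity on the compact leaf $L$, exactly as in the proof of Lemma \ref{lem:existlift}. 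Once these points are handled, the statement follows formally.
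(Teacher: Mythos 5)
There is a genuine gap. Your induction is run over the leaf $L$, with obstructions valued in ``the cohomology bundles of the resolution restricted to $L$'', and you claim these are forced into $H^1(A_L^s,-)$ ``because the resolution is exact''. But exactness of $\cdots\to\Gamma(E_{-2})\to\Gamma(E_{-1})\xrightarrow{\rho}\mathcal F\to 0$ holds over a neighbourhood $U$, not after restriction to $L$: restricting amounts to tensoring a projective resolution with $C^\infty(L)=C^\infty(U)/I_L$, and the resulting complex computes Tor-type invariants of $\mathcal F$ along $L$ which are precisely nonzero when the leaf is singular. So the coefficient modules of your obstruction classes do not vanish, and, worse, the obstruction to constructing the $n$-th Taylor coefficient of the $\infty$-morphism lives in Lie algebroid cohomology of degree $n+1\geq 2$, not in $H^1$. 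Lemma \ref{lem:h1iszero} only kills $H^1$ under $\pi_1(L)=0$; killing $H^2$ would already require $\pi_2(L)=0$ (Lemma \ref{lem:WhiteheadII}), which is not assumed in this proposition, and nothing in the hypotheses controls $H^{\geq 3}$ (semisimplicity does not: think of the Cartan $3$-class). So the inductive step fails at the very first stage beyond the linear one. A related symptom is that the convergence hypothesis --- the whole point of the statement --- plays no essential role in your argument.

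The paper's proof uses convergence exactly where your argument does not: the convergent section turns the formal Levi $\mathcal F$-connection into a genuine Lie algebroid action of $A_L^s$ on a neighbourhood $U$ of $L$, hence into a transformation Lie algebroid over $U$ whose anchor image lies in $\mathcal F$. One then invokes the universality property of $\mathbb U^{\mathcal F}$ (Theorem 2.9 in \cite{LLS}): any Lie ($\infty$-)algebroid over $U$ anchored in $\mathcal F$ admits a Lie $\infty$-morphism to $\mathbb U^{\mathcal F}$. The obstruction theory behind that theorem is the one you have in mind, but it is carried out over $U$, where the resolution \emph{is} exact, so all obstructions vanish for homological reasons and no Whitehead-type vanishing or topological assumption on $L$ is needed. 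Restricting the resulting morphism to $L$ then gives the desired Lie $\infty$-algebroid section of $\Pi$. If you want to salvage your approach, you must either move the lifting argument from $L$ to $U$ (at which point you are reproving the universality theorem) or add connectivity hypotheses strong enough to kill all the higher obstruction groups, which the proposition does not grant.
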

\begin{proof}
The Lie algebroid action of $A_L^s $
 on $U $ defines a sub-foliation $A_L^s $ in $\mathcal F $, namely the image through the anchor map  of the transformation Lie algebroid of this action. 
 In view of Theorem 2.9 in \cite{LLS}, there exists a Lie $ \infty$-algebroid morphism $ \Phi$ from this transformation Lie algebroid  to the universal Lie $ \infty$-algebroid $\mathbb U^\mathcal F$. The desired morphism is the restriction of $ \Phi$ to the leaf $L$.
 \end{proof}

An important question for a given singular foliation is to know whether or not it comes from a Lie algebroid action \cite{AZ13}. When the leaf $L$ is a point $ \{p\}$, the rank of such a Lie algebroid has to be greater or equal to the dimension  of the isotropy Lie algebra ${\mathfrak g}_p$.  Although the general problem remains open,  ${\mathfrak g}_p$ carries a Chevalley-Eilenberg cohomology $3$-class, called the NMRLA-class, that obstructs the possibility to have a Lie algebroid whose rank is minimal i.e. equal to ${\mathrm{dim}}( {\mathfrak g}_p) $ (see Proposition~4.29 in \cite{LLS}). Proposition \ref{prop:sectoU} has strong implications for this class: it shows that it is effa\c{c}able.\\

Recall that for $\mathfrak g $ a Lie algebra and $V$ a finite dimensional $\mathfrak g $-module, a class $\omega $ in a Chevalley-Eilenberg cohomology group $H^k(\mathfrak g, V) $ is \emph{effa\c{c}able} (or erasable) if there exists a finite dimensional $\mathfrak g $-module $W$ containing $V$ such that the image of $\omega  $ in $ H^k(\mathfrak g, W)$ is zero.\\

Let us briefly describe the NMRLA class assuming $L = \{p\}$ is a point leaf. In this case, $\mathbb U^\mathcal F|_{\{p\}} $ is a Lie $ \infty$-algebra whose $1$-ary bracket can be assumed to be zero. Then, its degree $(-1)$ component is a Lie algebra isomorphic to ${\mathfrak{g}}_p$ (see Proposition 4.14in \cite{LLS}), its degree $(-2)$-component is a ${\mathfrak{g}}_p $-module $V$, and the restriction to ${\mathfrak{g}}_p $ of the $3$-ary bracket is a Chevalley-Eilenberg $3$-cocycle valued in $V$ (see Proposition 4.27 in \cite{LLS}), defining the NMRLA class. 

\begin{proposition}
Let $\mathcal F $ be a locally real analytic singular foliation and $\{p\}$ a point leaf such that the formal section $\mathfrak g_p^s \to \mathcal F  $ whose existence is granted by Corollary \ref{coro:Cerveau} can be chosen to converge in a neighborhood of $ p$. 
Then the NMRLA-class of $\mathcal F $ at $p$ is effa\c{c}able.
\end{proposition}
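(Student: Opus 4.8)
The plan is to read off the effa\c{c}ability from the Lie $\infty$-algebroid section of Proposition \ref{prop:sectoU}, together with the Levi decomposition of the isotropy and a Hochschild--Serre argument.

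First I would fix notation: write $\mathfrak g=\mathfrak g_p$ for the isotropy Lie algebra, $\mathfrak r=\mathfrak{rad}(\mathfrak g)$ for its radical and $\mathfrak s=\mathfrak g^s=\mathfrak g/\mathfrak r$ for its semi-simple quotient; let $V$ be the finite-dimensional $\mathfrak g$-module appearing as the degree $-2$ part of a minimal model of $\mathbb U^{\mathcal F}|_{\{p\}}$ and $\omega\in H^3(\mathfrak g,V)$ the NMRLA-class, represented by the restriction to $\mathfrak g=\mathbb U^{\mathcal F}|_{\{p\},-1}$ of the $3$-ary bracket $\ell_3$. Since $L=\{p\}$ is compact and, by hypothesis, the section of Corollary \ref{coro:Cerveau} converges near $p$, Proposition \ref{prop:sectoU} applies and yields a Lie $\infty$-algebroid section $\sigma\colon\mathfrak s\to\mathbb U^{\mathcal F}|_{\{p\}}$ of $\Pi$ (here $\mathfrak s=A^s_{\{p\}}$, concentrated in degree $-1$). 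As the $1$-ary bracket vanishes in the minimal model, the linear component $\sigma_1\colon\mathfrak s\to\mathfrak g$ is a genuine Lie algebra morphism, i.e. a Levi splitting $\iota$ of $\mathfrak g\to\mathfrak s$; writing out the next $L_\infty$-morphism equation and again using $\ell_1=0$ gives
\[
 \iota^{*}\bigl(\ell_3|_{\Lambda^{3}\mathfrak g}\bigr)=d_{\mathrm{CE}}\,\sigma_2 \quad\text{in } C^{3}(\mathfrak s,V),
\]
where $\sigma_2\colon\Lambda^2\mathfrak s\to V$ is the quadratic component of $\sigma$ and the $\mathfrak s$-action on $V$ is the one restricted along $\iota$. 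In cohomology this says $\iota^{*}\omega=0$ in $H^{3}(\mathfrak s,V)$.

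Next I would run the Hochschild--Serre spectral sequence for the ideal $\mathfrak r\lhd\mathfrak g$ with coefficients in an arbitrary finite-dimensional $\mathfrak g$-module $W$. Since $\mathfrak s$ is semi-simple, the Whitehead lemmas give $E_2^{p,q}=H^p(\mathfrak s,H^q(\mathfrak r,W))=0$ for $p\in\{1,2\}$, so there is a short exact sequence $0\to E_\infty^{3,0}\to H^{3}(\mathfrak g,W)\to E_\infty^{0,3}\to 0$ with $E_\infty^{0,3}\subseteq H^{3}(\mathfrak r,W)^{\mathfrak s}$ (restriction to $\mathfrak r$) and $E_\infty^{3,0}$ a quotient of $H^{3}(\mathfrak s,W^{\mathfrak r})$ (inflation along $\mathfrak g\to\mathfrak s$). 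A class $\beta\in H^3(\mathfrak g,W)$ with $\mathrm{res}_{\mathfrak r}\beta=0$ therefore lies in $E_\infty^{3,0}$, hence equals $\mathrm{infl}(\alpha)$ for some $\alpha\in H^{3}(\mathfrak s,W^{\mathfrak r})$, and then $\iota^{*}\beta$ is the image of $\alpha$ under $H^{3}(\mathfrak s,W^{\mathfrak r})\to H^{3}(\mathfrak s,W)$, which is split injective because $W^{\mathfrak r}$ is an $\mathfrak s$-direct summand of $W$ by complete reducibility; so $\iota^{*}\beta=0$ forces $\alpha=0$ and $\beta=0$. Applying this to $\beta=$ image of $\omega$ in $H^{3}(\mathfrak g,W)$: we already know $\iota^{*}\omega=0$, and if $W\supseteq V$ is enlarged $\mathfrak g=\mathfrak r\rtimes\mathfrak s$-equivariantly then $V$ stays an $\mathfrak s$-summand, so $\iota^{*}\beta=0$; thus it suffices to produce a finite-dimensional $\mathfrak g$-module $W\supseteq V$ killing $\mathrm{res}_{\mathfrak r}\omega\in H^{3}(\mathfrak r,V)^{\mathfrak s}$ in $H^{3}(\mathfrak r,W)$.

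Finally I would dispatch this last step using that $\mathfrak r$ is \emph{solvable}: every positive-degree Chevalley--Eilenberg class of a solvable Lie algebra with finite-dimensional coefficients is effa\c{c}able, proved by induction along a flag $0\subset\mathfrak a\subset\dots\subset\mathfrak r$ of ideals with successive quotients of dimension $1$ (or $2$, over $\mathbb R$): at each step one uses the Hochschild--Serre sequence of a codimension-one ideal to reduce the degree, effa\c{c}es by induction over the smaller ideal, and extends the enlarged module across the remaining one-dimensional direction by a free extension in the style of the abelian model case. Taking the flag and all the extensions $\mathfrak s$-equivariantly --- possible because $\mathfrak s$ is semi-simple, so $\mathfrak r/[\mathfrak r,\mathfrak r]$, the quotient lines and all the relevant module extensions split $\mathfrak s$-equivariantly --- produces a finite-dimensional $\mathfrak g$-module $W\supseteq V$ as required, and combining with the previous paragraph gives that $\omega$ maps to $0$ in $H^{3}(\mathfrak g,W)$.

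The main obstacle is precisely this last, purely Lie-algebraic step: one must carry out the inductive effa\c{c}ing of a solvable-Lie-algebra cohomology class while simultaneously keeping everything finite-dimensional and keeping the ambient $\mathfrak g^{s}$-action, so that the output is a genuine finite-dimensional $\mathfrak g_p$-module rather than merely an $\mathfrak r$-module; the bookkeeping of extending module structures compatibly along the flag is where the care is needed. By contrast the first two paragraphs are a direct unwinding of Proposition \ref{prop:sectoU} and a routine Hochschild--Serre computation.
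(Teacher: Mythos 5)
Your first paragraph is essentially the paper's own argument: Proposition \ref{prop:sectoU} provides the Lie $\infty$-morphism, its first Taylor coefficient is a Levi splitting $\iota\colon\mathfrak g_p^s\to\mathfrak g_p$, and the next $L_\infty$-morphism identity (Equation (4.10) of \cite{LLS}) exhibits $\iota^*\omega$ as the Chevalley--Eilenberg coboundary of the quadratic Taylor coefficient, so the restriction of the NMRLA cocycle to the Levi subalgebra is exact. At that point the paper simply invokes Theorem 1 of \cite{Hochschild}, which states precisely that a class in $H^k(\mathfrak g_p,V)$ is effa\c{c}able if and only if its restriction to a maximal semi-simple subalgebra vanishes, and the proof is finished in one line.

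Your remaining two paragraphs attempt to reprove that criterion, and this is where there is a genuine gap. The Hochschild--Serre reduction in your second paragraph is correct: \emph{granted} a finite-dimensional $\mathfrak g_p$-module $W\supseteq V$ killing $\mathrm{res}_{\mathfrak r}\omega$ in $H^3(\mathfrak r,W)$, the vanishing of $E_2^{1,q}$ and $E_2^{2,q}$ together with the split injectivity of $H^3(\mathfrak s,W^{\mathfrak r})\to H^3(\mathfrak s,W)$ indeed force $\beta=0$. But the existence of such a $W$ --- a \emph{finite-dimensional} extension carrying an action of all of $\mathfrak g_p$, not merely of $\mathfrak r$ --- is exactly the nontrivial content of Hochschild's theorem, and your sketch of it does not go through as written. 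In particular, the proposed $\mathfrak s$-equivariant flag of $\mathfrak r$ with one- or two-dimensional successive quotients need not exist: $\mathfrak r/[\mathfrak r,\mathfrak r]$ is an $\mathfrak s$-module whose irreducible summands can have arbitrary dimension (already for $\mathfrak{sl}_2(\mathbb R)\ltimes\mathbb R^3$ with the irreducible three-dimensional representation, the radical has no proper nonzero $\mathfrak s$-stable subspace), so one cannot simultaneously keep the flag $\mathfrak s$-stable and its steps small; and the inductive step (``reduce the degree, effa\c{c}e over the smaller ideal, extend by a free extension'') is not carried out --- in particular it is never shown that the enlarged module admits a compatible action of the ambient algebra and of $\mathfrak s$ while remaining finite-dimensional. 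You flag this yourself as the ``main obstacle''; as it stands it is an unproven assertion whose proposed route contains a false auxiliary claim, so the argument is incomplete unless you either redo the induction with $\mathfrak s$-irreducible abelian quotients of arbitrary dimension, or, as the paper does, simply cite Theorem 1 of \cite{Hochschild} right after your first paragraph.
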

\begin{proof}
In view of Theorem 1 in \cite{Hochschild}, a cohomology class is effa\c{c}able if and only if its restriction to a maximal semi-simple Lie subalgebra is zero. Let $ \Phi \colon \mathfrak g_p^s \to \mathbb U^\mathcal F |_p $ be  a Lie $\infty $-algebroid morphism as in Proposition \ref{prop:sectoU}. 
The Taylor coefficient $ \Phi_1 \colon  \mathfrak g_p^s  \to  \mathfrak g_p^s$ of  $ \Phi$  is the identity map and the second Taylor coefficient $ \Phi_2 \colon \wedge^2  \mathfrak g_p^s  \to V$ satisfies (see Equation (4.10) in \cite{LLS})
  for all $a,b,c \in \mathfrak g_p^s$, $$
	\big\{a,b,c\big\}_3  = 	\big\{ a , \Phi_2 (b,c) \big\}_2 - \Phi_2\big(\{a,b\}_2,c\big)    + \circlearrowright{\hbox{\tiny{$abc$}}}  .$$
   This means that the restriction of the $3$-ary bracket to  $\mathfrak g_p^s $  is a Chevalley-Eilenberg cocycle. This concludes the proof. 
   \end{proof}

\subsection{Transversally quadratic simply connected leaves}

Let $\mathcal{F}$ be a locally real analytic foliation and $L$ a leaf. We say that a leaf $L $ is  \emph{transversally quadratic} if its transverse singular foliation (see Lemma \ref{lem:splitting}) is made of vector fields vanishing at least quadratically.
There is an easy characterization in terms of the holonomy Lie algebroid of the leaf $L$:

\begin{proposition}
\label{prop:transQuadr}
A leaf $L$ is transversally quadratic if and only if $A_L^{lin}=A_L^s= TL$. In particular, the normal bundle $\nu$ carries a natural flat connection $ \nabla^\nu$.
\end{proposition}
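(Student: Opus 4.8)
The plan is to translate the geometric condition ``transversally quadratic'' into an algebraic statement about the filtration $A_L = A_L^0 \supset A_L^1 \supset A_L^2 \supset \cdots$ introduced in Lemma \ref{lem:filtration}, and then invoke the nilpotence result of Theorem \ref{thm:arnilpotence}. First I would fix a point $p \in L$ and use the splitting Lemma \ref{lem:splitting} to write $\mathcal F$ locally as a product of the full foliation on a $d$-dimensional ball (where $d = \dim T_p\mathcal F = \dim L$) with the transverse foliation $\mathcal T$ on a transversal ball, which by hypothesis is generated by vector fields vanishing at least to order $2$ at the origin. The claim ``$L$ is transversally quadratic'' is then equivalent to ``$\mathfrak g_p^{lin} = 0$'' for every $p\in L$, since the linear holonomy isotropy $\mathfrak g_p^{lin}$ is exactly the image of the linearization map on the transverse foliation $\mathcal T$ (cf. the point-leaf example in Section \ref{sec:linhol}). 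Equivalently, $A_L^1 = A_L^2$, because $A_L^1/A_L^2$ is the bundle of linear holonomy isotropies $\mathfrak g^{lin}_L$.

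The forward direction: if $L$ is transversally quadratic, then $A_L^1 = A_L^2$; but since $[\Gamma(A_L)^i,\Gamma(A_L)^j]\subset \Gamma(A_L)^{i+j-1}$, iterating the bracket on $A_L^2 = A_L^1 = \mathfrak g_L$ shows $A_L^2 = A_L^3 = \cdots$, so $A_L^2 = \bigcap_{i}A_L^i$. By Theorem \ref{thm:arnilpotence} (using that $\mathcal F$ is locally real analytic) this intersection is zero, hence $\mathfrak g_L = A_L^1 = A_L^2 = 0$, i.e. the isotropy is trivial and $A_L = TL$; a fortiori $A_L^{lin} = A_L^s = TL$. For the converse: if $A_L^{lin} = TL$ then $\mathfrak g_L^{lin} = A_L^1/A_L^2 = 0$, which means precisely that every transverse vector field has vanishing linearization at $p$, i.e. vanishes to order $\ge 2$; hence $L$ is transversally quadratic. (The equivalence $A_L^{lin}=TL \iff A_L^s = TL$ is immediate from $A_L^s = A_L^{lin}/\operatorname{rad}$ together with Proposition \ref{prop:linTos}: $A_L^{lin}=TL$ forces $A_L^s = TL$, and conversely if $A_L^s = TL$ then the isotropy of $A_L^{lin}$ is solvable, but it is also a subalgebra of $\mathfrak{gl}(\nu_p)$ obtained by linearization of a foliation, and the real analytic nilpotence argument again forces it to vanish — alternatively one just states the three conditions as mutually equivalent through the chain of surjections $A_L^{lin}\to A_L^s\to TL$.)

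Finally, for the last sentence: once $A_L^s = A_L^{lin} = TL$, the canonical Lie algebroid action of $A_L$ on $\nu = TM|_L/TL$ (reviewed in Section \ref{sec:linhol}, factoring through $A_L^{lin} = CDO(\nu)$-image) becomes a \emph{flat} $TL$-connection on $\nu$, because a transitive Lie algebroid morphism $TL \to CDO(\nu)$ splitting the anchor is by definition a flat connection $\nabla^\nu$; flatness is automatic since the morphism $A_L^{lin}\to CDO(\nu)$ is a Lie algebroid morphism and here $A_L^{lin}=TL$, so the bracket relation $[\nabla_X,\nabla_Y] = \nabla_{[X,Y]}$ holds. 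I expect the main obstacle to be pinning down cleanly the equivalence ``transversally quadratic $\iff$ $\mathfrak g_L^{lin}=0$'' — i.e. checking that the linearization of the transverse foliation at $p$ really computes $A_L^1/A_L^2$ and is independent of the splitting — together with invoking Theorem \ref{thm:arnilpotence} in the right form; everything after that is formal.
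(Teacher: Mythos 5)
The forward direction of your argument has a genuine gap, and in fact proves a false statement. From ``transversally quadratic $\Leftrightarrow A_L^1=A_L^2$'' you claim that the bracket relation $[\Gamma(A_L)^i,\Gamma(A_L)^j]\subset\Gamma(A_L)^{i+j-1}$ lets you iterate to get $A_L^2=A_L^3=\cdots=\bigcap_i A_L^i$, and then invoke Theorem \ref{thm:arnilpotence} to conclude $\mathfrak g_L=0$, i.e.\ $A_L=TL$. But the filtration of Lemma \ref{lem:filtration} is defined by the vanishing order of representative vector fields, not by iterated brackets: from $\Gamma(A_L)^1=\Gamma(A_L)^2$ you only get $[\Gamma(A_L)^2,\Gamma(A_L)^2]\subset\Gamma(A_L)^1$, which says nothing about $\Gamma(A_L)^3$. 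Moreover the conclusion $A_L=TL$ is simply false: for the foliation on $\mathbb R$ generated by $x^2\tfrac{\partial}{\partial x}$, the leaf $L=\{0\}$ is transversally quadratic, yet $\mathfrak g_L=A_L^1=A_L^2\cong\mathbb R\neq0$ (while $A_L^3=0$), so $A_L\neq TL$; only $A_L^{lin}=A_L^s=TL$ holds, which is all the proposition asserts. The correct argument is shorter and is essentially the paper's: transversal quadraticity says exactly $\mathcal F^v\subset I_L^2\mathfrak X^v$, i.e.\ $A_L^1=A_L^2$, and since $A_L^{lin}=A_L/A_L^2$ by Lemma \ref{lem:filtration}, its isotropy $A_L^1/A_L^2$ vanishes, so $A_L^{lin}=TL$; the converse is the same computation read backwards. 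For the semi-simple quotient, note that then $\mathfrak g_L=A_L^2$ is nilpotent by Theorem \ref{thm:arnilpotence}, hence equal to its own radical, so $A_L^s=A_L/\mathfrak{rad}(\mathfrak g_L)=A_L/\mathfrak g_L=TL$ (equivalently, use the surjection $A_L^{lin}\to A_L^s$ of Proposition \ref{prop:linTos}).

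A secondary slip: in your parenthetical you claim that $A_L^s=TL$ alone forces $A_L^{lin}=TL$ because a solvable linear isotropy must vanish by ``the real analytic nilpotence argument''. This is not so: for the foliation generated by $x\tfrac{\partial}{\partial x}$ at the origin, the linear isotropy is the abelian (hence solvable) line $\mathbb R\cdot x\tfrac{\partial}{\partial x}$, so $A_L^s=0=TL$ while $A_L^{lin}\cong\mathbb R\neq TL$ and the leaf is not transversally quadratic; Theorem \ref{thm:arnilpotence} only concerns the kernel $A_L^2$ of the linearization and cannot kill a nonzero solvable linear isotropy. Since the proposition assumes both equalities $A_L^{lin}=A_L^s=TL$, your converse only needs $A_L^{lin}=TL$, so this does not break the proof, but the claim as written is wrong. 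Your last step is fine and agrees with the paper: the canonical $A_L^{lin}$-action on $\nu$ from Subsection \ref{sec:linhol}, with $A_L^{lin}=TL$, is precisely a flat connection $\nabla^\nu$.
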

\begin{proof}
By definition of $A_L^{lin} $, the first part of the proposition follows from the following intermediate characterization of transversally quadratic leaves: A leaf is transversally quadratic if $\mathcal F^v\subset I^2_L\mathfrak X^v$. The second part of the proposition follows from the existence, for every leaf $L$, of a natural $A_L^{lin} $-action on $\nu $, see Subsection~\ref{sec:linhol}.
\end{proof}

\begin{remark}
It follows immediately from Proposition \ref{prop:transQuadr} that a leaf $L \subset M$ whose normal bundle is not flat can not be transversally quadratic, which is a very strong constraint. For instance, $S^2 \subset TS^2$ can not be transversally quadratic.
\end{remark}

For a regular foliation, it is well-known that in a neighborhood of a simply connected leaf $L$, the  foliation is ``trivial'', i.e. formally, it is isomorphic to the direct product of the leaf $L$ with an open disk. 
The same phenomena occurs for transversally quadratic leaves: 

\begin{theorem}
\label{theo:transvQuadr}
Every simply-connected, transversally quadratic and locally closed leaf $L$ of a locally real analytic singular foliation $ \mathcal F$ is formally trivial, i.e. the formal jet $\widehat{\mathcal F} $ along $L$ is  isomorphic to the direct product\footnote{Direct products of $L$ with a singular foliation are discussed in Example \ref{ex:directprod}.} of $L$ with the formal jet of the transverse foliation.
\end{theorem}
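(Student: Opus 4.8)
The plan is to deduce this from Theorem \ref{thm:formal2} by verifying that its hypotheses hold and then checking that, for a transversally quadratic leaf, the semi-direct product degenerates to a direct product. First I would invoke Proposition \ref{prop:transQuadr}: since $L$ is transversally quadratic, $A_L^{lin} = A_L^s = TL$. In particular the identity map is a Lie algebroid section $z \colon A_L^s \to A_L^{lin}$, so the second hypothesis of Theorem \ref{thm:formal} is automatic; the first hypothesis, $\pi_1(L) = 0$, is assumed. Hence Theorem \ref{thm:formal2} applies and yields a formal diffeomorphism between $M$ and $\nu$ near $L$ identifying $\widehat{\mathcal F}$ with a semi-direct product $A_L^s \,\widehat{\ltimes}\, \widehat{\mathcal R}_\nu = TL \,\widehat{\ltimes}\, \widehat{\mathcal R}_\nu$, where $\mathcal R$ is the radical foliation, here $\mathcal R = \mathrm{Ker}(\mathcal F^{proj} \to \Gamma(A_L^s)) = \mathrm{Ker}(\mathcal F^{proj} \to \mathfrak X(L)) = \mathcal F^v$.

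The substantive step is to show that the $A_L^s = TL$-action on $\nu$ entering the semi-direct product can be trivialized, so that $TL \,\widehat{\ltimes}\, \widehat{\mathcal R}_\nu$ becomes the direct product of $L$ with the transverse foliation as in Example \ref{ex:directprod}. By part (1) of Theorem \ref{thm:formal2}, $\nu$ carries a flat $A_L^s = TL$-connection $\nabla^\nu$ (this is also the connection of Proposition \ref{prop:transQuadr}). Since $L$ is simply connected, a flat connection on a vector bundle over $L$ has trivial holonomy, hence $\nu$ is isomorphic, as a $TL$-module, to the trivial bundle $L \times \mathbb R^{n-\dim L}$ with the trivial (flat) action. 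Transporting the decomposition of Theorem \ref{thm:formal2} through this trivialization, the image of the Levi connection $s$ becomes the span of the constant vector fields $\partial/\partial y_i$ along $L$ (the horizontal lifts for $\nabla^\nu$), i.e. the factor $\mathfrak X(L) \subset \mathfrak X(L \times \mathbb R^{n-\dim L})$, and $\widehat{\mathcal R}_\nu$ becomes a vertical foliation invariant under this trivial action — that is, a foliation whose generators have coefficients independent of the $L$-coordinate. This is exactly the statement that $\widehat{\mathcal F}$ is the direct product of $L$ with $\widehat{\mathcal R}_\nu$, and $\widehat{\mathcal R}_\nu$ is by construction the formal jet of the transverse foliation (Lemma \ref{lem:splitting}).

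The main obstacle I anticipate is bookkeeping rather than conceptual: one must be careful that the ``$TL$-invariance'' of $\mathcal R_\nu$ appearing in Theorem \ref{thm:formal2} really forces coordinate-independence after the trivialization, i.e. that a formal vertical foliation invariant under the trivial $TL$-action on a trivial bundle is of the form (vertical foliation on the fiber) pulled back — which follows because invariance under $\partial/\partial y_i$ for all $i$ means the module of generators can be chosen with $y$-independent coefficients, using that the module is finitely generated. A secondary point worth a sentence is identifying $\widehat{\mathcal R}_\nu$ with the formal transverse foliation: this is where the uniqueness clause of the splitting Lemma \ref{lem:splitting} enters, guaranteeing the germ of $\mathcal R_\nu$ at a fiber agrees with the transverse singular foliation of $L$. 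I would also note explicitly that, as in the statement, the triviality obtained is only formal — the argument produces no convergent model — and that the flatness and simple-connectedness of $L$ are both essential, Examples \ref{ex:snake} and \ref{ex:transquad} showing what goes wrong otherwise.
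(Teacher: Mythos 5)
Your proposal is correct and follows essentially the same route as the paper: verify the hypotheses of Theorem \ref{thm:formal2} via Proposition \ref{prop:transQuadr} (so $A_L^{lin}=A_L^s=TL$ and the identity serves as the section), identify the radical foliation with the transverse one, and use flatness of $\nu$ together with simple connectedness of $L$ to trivialize the $TL$-action so that the semi-direct product $TL\,\widehat{\ltimes}\,\widehat{\mathcal R}_\nu$ collapses to a direct product. Your extra bookkeeping about $y$-independent generators only spells out what the paper summarizes as ``the semi-direct product is then reduced to a direct product.''
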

\begin{proof}
Both conditions in Theorem \ref{thm:formal2} are satisfied: $L$ is simply connected by assumption and a section $A_L^s \to A_L^{lin} $ exists since both algebroids coincide with $TL$ by  Proposition \ref{prop:transQuadr}.
There is therefore a formal isomorphism between $ \widehat{\mathcal F}$ and 
$ TL \widehat \ltimes \widehat{\mathcal R}$, with $ \mathcal R$ the radical foliation.
In this case, however, there are several obvious identifications:
\begin{enumerate}
    \item The radical foliation $\mathcal R$ of $ \mathcal F$ is simply the transverse singular foliation.
    \item By Proposition \ref{prop:transQuadr},  the normal bundle $ \nu$ is flat. Since $L$ is simply connected, it is indeed a trivial vector bundle: $ \nu  \simeq  L \times \nu_p $, with $\nu_p $ some given fiber. 
\end{enumerate}
The semi-direct product is then reduced to a direct product. This gives the desired formal isomorphism.
\end{proof}

\begin{remark}\label{rmk:nooidnofish}
 Theorem \ref{theo:transvQuadr} is a purely singular foliation phenomenon: there is no such a result for Lie algebroids or Poisson structures. In fact, even for regular Poisson or Lie algebroid structures there is no such a result.
 For instance, choose of a volume form $ \omega$ on the $2$-sphere $S^2$, let $ \pi = \omega^{-1}$ be its inverse Poisson structure and consider the Poisson structure on $ S^2 \times \mathbb R $ given by $  e^t \pi \oplus 0$ with $t$ the parameter on $ \mathbb R$. The symplectic leaves are the fibers of the projection $  S^2 \times \mathbb R  \to \mathbb R   $. They are therefore simply-connected and their transverse Poisson structure is zero (in particular, it is transversally quadratic: it vanishes at order at least $2$). But since the volumes of all the symplectic leaves are different, this Poisson structure can not be isomorphic to a direct product of $ \pi = \omega^{-1}$ with the trivial Poisson structure on $ \mathbb R$ in a neighborhood of a given leaf (even formally).  
 
 Similarly, consider sections of the vector bundle $ A = T(S^2 \oplus \mathbb R)  $ over the manifold $ S^2\times \mathbb R $ as pairs $(X,f) $ or $ (Y,g)$ with $X,Y$ being $t$-dependent vector fields tangent to $S^2 $ and $f,g$ $t$-dependent real-valued functions on $S^2 $ (with $t$ the parameter along $ \mathbb R$.). The bracket:
  $$  [ (X, f)  , (Y , g) ] = \left([X,Y] , X[g] - Y[f] + t \omega(X,Y) \right)$$
is a Lie algebroid bracket on $A$. The leaves of $A$ are 2-spheres: they are therefore simply connected. The transverse Lie algebroid $ T \mathbb R \to \mathbb R$ has trivial anchor and trivial bracket. The restriction of the Lie algebroid $A$ to any two leaves are isomorphic, except for the exceptional leaf $t=0$.  Hence the Lie algebroid $A$ is not a direct product near the leaf $t=0$.  
\end{remark}

By applying the ``stopping early'' strategy from Proposition \ref{prop:sectoAL} in the proof of the previous Theorem, we obtain the following result:

\begin{corollary}
The holonomy Lie algebroid $A_L $ of a simply connected, transversally quadratic and locally closed leaf $L$ is the direct sum of $TL$ with the isotropy Lie algebra of its transverse foliation.
\end{corollary}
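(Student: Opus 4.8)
The plan is to mimic the proof of Proposition~\ref{prop:sectoAL}: instead of running the iteration of Theorem~\ref{thm:formal} all the way to the formal level, stop it after finitely many steps, at the order dictated by the Artin–Rees bound $c$ of $\mathcal F$ at $L$. Concretely, Theorem~\ref{theo:transvQuadr} is proved by checking that the two hypotheses of Theorem~\ref{thm:formal2} hold (here $A_L^s=A_L^{lin}=TL$ by Proposition~\ref{prop:transQuadr}, and $L$ is simply connected), and the underlying construction is the inductive one of Proposition~\ref{prop:step}: starting from an Ehresmann (= Levi, since $A_L^s=TL$) $\mathcal F$-connection $s$ and an Euler-like vector field $\mathcal E$, one improves the pair $(s^k,\mathcal E^k)$ to $(s^{k+1},\mathcal E^{k+1})$ so that flatness and linearity hold modulo $I_L^{k+1}\mathfrak X^v$. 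In the transversally quadratic case the radical foliation $\mathcal R$ equals the transverse singular foliation, and the normal bundle $\nu\cong L\times\nu_p$ is trivial because it is flat (Proposition~\ref{prop:transQuadr}) and $L$ is simply connected.

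First I would run the iteration of Proposition~\ref{prop:step} only up to step $k=c+1$, producing a Levi $\mathcal F$-connection $s:=s^{c+1}:\Gamma(TL)\to\mathcal F^{proj}$ together with an Euler-like vector field $\mathcal E:=\mathcal E^{c+1}$ such that
\[
[s(u),s(v)]-s([u,v])\in I_L^{c+1}\mathfrak X^v\cap\mathcal F
\qquad\text{and}\qquad
[s(u),\mathcal E]\in I_L^{c+1}\mathfrak X^v
\]
for all $u,v\in\mathfrak X(L)$. By Artin–Rees (inequality \eqref{eq:ArtinRees2}, i.e. Theorem~\ref{thm:arnilpotence}), $I_L^{c+1}\mathfrak X^v\cap\mathcal F\subset I_L\mathcal F$, so the induced map $TL\to A_L$ obtained by pushing $s$ to $\Gamma(A_L)=\mathcal F/I_L\mathcal F$ is an honest Lie algebroid section of the anchor $A_L\to TL$. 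Likewise, $\mathcal E$ defines a genuine (non-formal) vector bundle structure on a neighbourhood of $L$ identifying it with $\nu$, with respect to which the image of $s$ consists of fiberwise linear vector fields (this is exactly the content of Lemma~\ref{lem:bothActionCoincide}, restricted from the formal to the finite order that Artin–Rees permits).

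Next, using the splitting $\Gamma(A_L)=\operatorname{im}(s)\oplus\Gamma(\mathfrak g_L)$ coming from this Lie algebroid section, I would identify $\mathfrak g_L=\ker(A_L\to TL)$. By Lemma~\ref{lem:filtration}, $\mathfrak g_L=A_L^1$, and transverse quadraticity means $\mathcal F^v\subset I_L^2\mathfrak X^v$, so $A_L^1=A_L^2=\ker(A_L\to A_L^{lin})$; in particular every section of $\mathfrak g_L$ is represented by a vertical vector field in $\mathcal F$ vanishing to order $\geq 2$ along $L$, i.e.\ by an element of the transverse foliation $\mathcal R=\mathcal F^v$. The Lie bracket on the fibers of $\mathfrak g_L$ therefore coincides with the isotropy Lie algebra of the transverse foliation. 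Finally, since $s$ is a Lie algebroid section, the bracket on $A_L=\operatorname{im}(s)\oplus\mathfrak g_L$ has no ``twisting'' term: $[s(u),s(v)]=s([u,v])$, $[s(u),\xi]$ stays in $\mathfrak g_L$ (this is the $A_L^s$-action on $\mathfrak g_L$, which is trivial on the fiber direction up to isomorphism since the whole picture is a direct product of $L$ with the transverse data), and $[\xi,\eta]$ is the isotropy bracket. This exhibits $A_L$ as the direct sum Lie algebroid $TL\oplus\mathfrak g_p$.

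The main obstacle is verifying that the identification $A_L\cong TL\oplus\mathfrak g_p$ is really a \emph{direct} sum and not merely a semidirect product, i.e.\ that the $TL$-action on $\mathfrak g_L$ induced by $s$ is trivial as a bundle of Lie algebras after the chosen trivialisation. This is where simple connectedness of $L$ enters a second time, via the triviality of the normal bundle and hence of $\mathfrak g_L$ as an equivariant bundle: concretely, one should observe that the flat $TL$-connection $\nabla^\nu$ of Proposition~\ref{prop:transQuadr} makes $\mathfrak g_L$ a flat bundle of Lie algebras over the simply connected base $L$, hence globally trivial, and that under this trivialisation the $TL$-action of $s$ acts by derivations that vanish because a flat connection on a trivial bundle over a simply connected base is the trivial one in a suitable gauge. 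Once this is in place, reading off $A_L=TL\oplus\mathfrak g_p$ from the construction is immediate, and the corollary follows. One could alternatively deduce the statement directly from the conclusion of Theorem~\ref{theo:transvQuadr} by noting that $A_L$ is recovered as $\widehat{\mathcal F}^{proj}/\widehat I_L\widehat{\mathcal F}^{proj}$ and that the direct product decomposition $\widehat{\mathcal F}=TL\times\widehat{\mathcal R}$ passes to this quotient, yielding $A_L=TL\oplus(\mathcal R/ I_{\{p\}}\mathcal R)=TL\oplus\mathfrak g_p$; I would present the Artin–Rees ``stopping early'' argument as the primary route and mention this second derivation as a cross-check.
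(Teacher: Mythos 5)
Your proposal is correct and follows essentially the same route as the paper: invoke the ``stopping early'' argument of Proposition~\ref{prop:sectoAL} (with $A_L^s=A_L^{lin}=TL$ from Proposition~\ref{prop:transQuadr}) to get a Lie algebroid section $TL\to A_L$, then use the flat structure it induces on $\mathfrak g_L=\ker(\rho)$ together with simple connectedness of $L$ to trivialise this Lie algebra bundle and read off the direct sum. The only slight imprecision is that the relevant flat connection on $\mathfrak g_L$ is $u\mapsto [s(u),\cdot\,]$ coming from the section itself (its flatness being exactly $[s(u),s(v)]=s([u,v])$), not $\nabla^\nu$ of Proposition~\ref{prop:transQuadr}, but this does not affect the argument.
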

\begin{proof}
By Proposition \ref{prop:sectoAL}, a Lie algebroid section 
 $\underline{s} :  \mathfrak X(L)  \to   \mathcal F^{v}/ I_L  \mathcal F^{v}  \simeq \Gamma(A_L)$ exists.
 This section makes the isotropy Lie algebra bundle $ {\mathrm{ker}}(\rho)$ of $A_L$ a flat Lie algebra bundle. Since $L$ is simply connected, it is a trivial Lie algebra bundle.
\end{proof}

The proof of Theorem \ref{theo:transvQuadr} is fact shows the following more general statement: 

\begin{theorem}\label{thm:TLtoAlinimpliestriviality}
A simply-connected and locally closed leaf $L$ of a locally real analytic singular foliation $ \mathcal F$ is formally trivial (i.e. the formal jet $\widehat{\mathcal F} $ along $L$ is  isomorphic to the direct product of $L$ with the formal jet of the transverse foliation) if and only if there exists a Lie algebroid section $ TL \to A^{lin}_L$. 
\end{theorem}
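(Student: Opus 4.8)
The plan is to mirror the proof of Theorem~\ref{theo:transvQuadr}, but to keep the linear holonomy $A_L^{lin}$ general while systematically replacing the semi-simple holonomy $A_L^s$ by the tangent Lie algebroid $TL$ throughout the machinery of Section~\ref{sec:formalthm}. In particular I would reprove (the Ehresmann analogue of) Theorems~\ref{thm:formal} and~\ref{thm:formal2} with $A_L^s$ everywhere replaced by $TL$ and the hypothesis ``$z\colon A_L^s\to A_L^{lin}$ exists'' replaced by ``$z\colon TL\to A_L^{lin}$ exists''.

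For the easy implication, assume $\widehat{\mathcal F}$ is formally isomorphic to a direct product $L\times\widehat{\mathcal T}$. The product foliation contains, for each $u\in\mathfrak X(L)$, the vector field $u\oplus 0$, which is projectable for $\pi_0\colon L\times\mathbb R^q\to L$ and tangent to $L\times\{0\}$; the assignment $u\mapsto u\oplus 0$ is a formal flat Ehresmann $\mathcal F$-connection for the product. Transporting it through the given formal isomorphism yields a formal flat Ehresmann $\mathcal F$-connection for $\widehat{\mathcal F}$, which (using that the formal and smooth holonomy Lie algebroids of a locally real analytic foliation agree) descends to a Lie algebroid section of the anchor $A_L\to TL$; composing it with $A_L\to A_L^{lin}$ produces the desired section $TL\to A_L^{lin}$.

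For the substantial implication, assume a Lie algebroid section $z\colon TL\to A_L^{lin}$ is given. First I would lift $z$ to an Ehresmann $\mathcal F$-connection $s^0\colon\mathfrak X(L)\to\mathcal F^{proj}$ projecting onto $z$ — possible, possibly after passing to a sub-$\mathcal F$-neighbourhood, by composing $z$ with a linear splitting of the bundle map $A_L\to A_L^{lin}$ and with the section of $\mathcal F^{proj}\to\Gamma(A_L)$ furnished by Lemma~\ref{lem:existlift}. Then I would run, verbatim, the induction of Lemma~\ref{lem:initial} and Proposition~\ref{prop:step} with $A_L^s$ replaced by $TL$: semi-simplicity of the isotropy entered those arguments only through the vanishing of $H^1$ via Whitehead Lemma~I (Lemma~\ref{lem:h1iszero}), and $TL$ is a transitive Lie algebroid with zero — hence (vacuously) semi-simple — isotropy, so Lemma~\ref{lem:h1iszero} still applies once $\pi_1(L)=0$; the initial step only needs $s^0$ to project to a Lie algebroid morphism into $A_L^{lin}$, which is exactly $z$. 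The output is a formal flat Ehresmann $\mathcal F$-connection $s^\infty\colon\mathfrak X(L)\to\widehat{\mathcal F}^{proj}$ together with a $\pi$-vertical formal Euler-like vector field $\mathcal E^\infty$ commuting with the image of $s^\infty$.

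To conclude I would reason geometrically as in the passage from Theorem~\ref{thm:formal} to Theorem~\ref{thm:formal2}. Using $\mathcal E^\infty$ I identify $(U,\pi)$ formally with the normal bundle $(\nu,\pi)$ and transport $\widehat{\mathcal F}$; the vector fields in the image of $s^\infty$ then become fibrewise linear and realise the flat linear connection $TL\to A_L^{lin}\to CDO(\nu)$ (first map $z$), so $\nu$ is a flat vector bundle, hence — because $\pi_1(L)=0$ — trivial as a flat bundle. A formal fibrewise-linear gauge transformation (which preserves $\mathcal E^\infty$) turns this connection into the trivial one, so in product coordinates $\nu\simeq L\times\nu_p$ the image of $s^\infty$ is the set of vector fields $u\oplus 0$. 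Finally, since $\mathcal F^v=\ker(\mathcal F^{proj}\to\mathfrak X(L))$ and $s^\infty$ is a Lie-algebra section, $\widehat{\mathcal F}^{proj}=s^\infty(\mathfrak X(L))\ltimes\widehat{\mathcal F^v}$, and invariance of $\widehat{\mathcal F^v}$ under $[u\oplus 0,\,\cdot\,]$ forces it to be translation-invariant along $L$, i.e. $\widehat{\mathcal F^v}=L\times\widehat{\mathcal T}$ where, by the splitting Lemma~\ref{lem:splitting}, $\mathcal T$ is the transverse singular foliation; hence $\widehat{\mathcal F}=L\times\widehat{\mathcal T}$. The main obstacle I anticipate is bookkeeping rather than a new idea: one has to verify that every step of Proposition~\ref{prop:step} survives the substitution $A_L^s\rightsquigarrow TL$ (that the auxiliary module $V^k$, the flat $TL$-connection $\nabla$, the defect cocycle and its primitive used only transitivity and $H^1=0$), and that the output can be consistently downgraded from a Levi $\mathcal F$-connection to an Ehresmann $\mathcal F$-connection at each stage — which is clear since all corrections $\varepsilon^k,\sigma^k$ are $\pi$-vertical and vanish to order $\geq 2$ along $L$.
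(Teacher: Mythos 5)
Your proposal is correct and follows essentially the same route as the paper: the paper proves Theorem \ref{thm:TLtoAlinimpliestriviality} precisely by observing that the argument for Theorem \ref{theo:transvQuadr} (i.e.\ the induction of Theorem \ref{thm:formal} and its geometric reformulation, Theorem \ref{thm:formal2}) goes through verbatim with $A_L^s$ replaced by $TL$, since semi-simplicity only entered via the vanishing of $H^1$, which holds for $TL$ when $\pi_1(L)=0$. You merely make explicit the bookkeeping of this substitution and the (easy) converse direction, both of which the paper leaves implicit.
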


{\small
\bibliographystyle{alpha}
\bibliography{biblio}
}

\end{document}